\numberwithin{equation}{section}
\theoremstyle{plain}
\newtheorem{theorem}{Theorem}[section]
\newtheorem{lemma}[theorem]{Lemma}
\newtheorem{proposition}[theorem]{Proposition}
\theoremstyle{definition}
\newtheorem{example}[theorem]{Example}
\theoremstyle{remark}
\newtheorem{myclaim}[theorem]{Claim} 
\newcommand{\ones}{\bm{1}}
\newcommand{\zeros}{\bm{0}}
\newcommand{\N}{{\mathbb{N}}}
\newcommand{\Z}{{\mathbb{Z}}}
\newcommand{\R}{\mathbb{R}}
\newcommand{\Sym}{\mathcal{S}}
\newcommand{\fix}{\normalfont\text{fix}}
\newcommand{\orb}{\text{orb}}
\newcommand{\LS}{\normalfont\text{LS}}
\newcommand{\SA}{\normalfont\text{SA}}
\newcommand{\conv}{\text{conv}}
\newcommand{\M}{{\normalfont\text{M}}}
\title{Linear Programming Hierarchies Collapse under Symmetry}
\author{
  Yuri Faenza\thanks{Department of Industrial Engineering and Operations Research, Columbia University, USA}
  \and 
  Victor Verdugo\thanks{Department of Industrial and Systems Engineering, Pontificia Universidad Católica de Chile, Chile}
  \thanks{Institute for Mathematical and Computatonal Engineering, Pontificia Universidad Católica de Chile, Chile}
  \and 
  José Verschae\footnotemark[3]
  \and 
  Matías Villagra\footnotemark[1]
}
\date{} 
\begin{document}

\maketitle

\begin{abstract}
The presence of symmetries is one of the central structural features that make some integer programs challenging for state-of-the-art solvers. In this work, we study the efficacy of Linear Programming (LP) hierarchies in the presence of symmetries. 
Our main theorem unveils a connection between the algebraic structure of these relaxations and the geometry of the initial integer-empty polytope: We show that under $(k+1)$-transitive symmetries--a measure of the underlying symmetry in the problem--the corresponding relaxation at level $k$ of the hierarchy is non-empty if and only if the initial polytope intersects all $(n-k)$-dimensional faces of the hypercube. 
In particular, the hierarchies of Sherali-Adams, Lovász-Schrijver, and the Lift-and-Project closure are equally effective at detecting integer emptiness. 
Our result provides a unifying, group-theoretic characterization of the poor performance of LP-based hierarchies, and offers a simple procedure for proving lower bounds on the integrality gaps of symmetric polytopes under these hierarchies. 
\end{abstract}

\section{Introduction}

Understanding what makes an Integer Program (IP) challenging for state-of-the-art algorithms is a fundamental step toward developing more effective solvers. 
While it is reasonable to assume that a larger number of variables and constraints would slow down any algorithm for solving an IP, there are other, more subtle features of IPs that are problematic due to the techniques we currently employ. 
One such feature is \emph{symmetry}, informally defined as any transformation of a mathematical object that leaves its fundamental properties unchanged.

Mathematicians usually welcome a high degree of symmetry, as it often implies strong properties of the objects under consideration. 
For instance, systems of linear equalities $Ax=b$ can be solved faster in practice when $A=A^{\top}$, because factorizations of $A$ are guaranteed to exist in this case, see, e.g.,~\cite{bunch_stable_1977}. 
In contrast, symmetry is usually problematic in Integer Programming. 
Even small-sized symmetric problems are often hard to solve~\cite{fulkerson2009two,margot2010,ostrowski2011solving,ostrowski_using_2014,pfetsch_computational_2019}. 
In his survey on the topic~\cite{margot2010}, Margot states that \emph{the
trouble comes from the fact that many subproblems in the enumeration tree are isomorphic, forcing a wasteful duplication of effort}--but as we will see, this is not the only problem caused by symmetry. 

Understanding how to tackle symmetric IPs is essential, as some degree of symmetry is prevalent in both theory and practice. 
The structure of the problem itself sometimes causes symmetry -- for instance, if we wish to schedule jobs on identical machines, or if we solve a combinatorial optimization problem on a highly symmetric graph. 

Whichever the cause, more than 36\%  of the instances from MIPLIB 2010 admit nontrivial symmetries; see the work by Pfetsch and Rehn~\cite{pfetsch_computational_2019} for details. Moreover, symmetries have played an important role for theoretical work, both from a polyhedral perspective (e.g.,~\cite{yannakakis_expressing_1991,ben-tal_polyhedral_2001}), as a devise to construct lower bounds of relaxations (e.g.,~\cite{grigoriev2001complexity,laurent_lower_2003}), and a way of reducing the dimensionality of convex relaxations (e.g.,~\cite{gatermann_symmetry_2004,ostrowski_using_2014}), to mention a few.%

\paragraph{Our contributions.} In this work, we investigate how symmetry can harm state-of-the-art techniques for solving Integer Programs. 
Our work distinguishes itself from the literature with respect to two features. 
First, unlike the most common approach of studying the effect of symmetry on creating many ``equivalent'' integer solutions, we investigate the relationship between symmetry and the performance of {\it linear programming hierarchies}, which are well-known techniques that iteratively improve LP relaxations of an integer program. 
Second, while most research has focused on specific classes of problems such as, e.g., matchings~\cite{kurpisz2018sum,mathieu2009sherali}, knapsack~\cite{grigoriev2001complexity,karlin2011integrality}, max-cut and sparsest-cut~\cite{cohen20242,gupta2013sparsest,laurent_lower_2003}, scheduling~\cite{kurpisz_semidefinite_2016,verdugo_breaking_2020}, vertex-cover~\cite{georgiou2010integrality}, and planted clique~\cite{barak2019nearly,meka_sum--squares_2015}, with the notable exception of the work by Kurpisz et al.~\cite{kurpisz_sum--squares_2020}, our assumptions only rely on a well-established group-theoretic parametrization of the symmetry of the underlying polytope, namely \emph{$k$-transitivity}. 
Our main result is the following.

\begin{theorem}\label{main_theorem}
    Let $k, n \in \N$ such that $k < n$, and let $P \subseteq [0,1]^{n}$ be a $(k+1)$-transitive integer-empty polytope. Then the following statements are equivalent:
    \begin{enumerate}[label=\normalfont(\Alph*)]
    \setlength\itemsep{0.3em}
        \item $P$ intersects all of the $(n-k)$-dimensional faces of the hypercube $[0,1]^{n}$;\label{mainthm-a}
        \item $\SA^{k}(P)\neq \emptyset$;\label{mainthm-b}
        \item $\LS^{k}(P) \neq \emptyset$;\label{mainthm-c}
        \item $\LS_{0}^{k}(P) \neq \emptyset$.\label{mainthm-d}
    \end{enumerate}
\end{theorem}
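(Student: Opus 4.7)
My plan establishes the cycle \text{(B)}$\Rightarrow$\text{(C)}$\Rightarrow$\text{(D)}$\Rightarrow$\text{(A)}$\Rightarrow$\text{(B)}. The first two implications follow immediately from the standard hierarchy inclusions $\SA^k(P) \subseteq \LS^k(P) \subseteq \LS_0^k(P)$, valid for every polytope $P \subseteq [0,1]^n$. The unifying theme for the remaining two implications is that $(k+1)$-transitivity makes $\LS_0^k$, $\LS^k$, and $\SA^k$ fully $G$-equivariant; combined with convexity, this lets me average any feasible point of a hierarchy into a $G$-invariant point, necessarily of the form $c \ones$ for some $c \in [0,1]$ by transitivity of the action on coordinates.

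For \text{(D)}$\Rightarrow$\text{(A)}, I argue contrapositively: suppose $P$ misses some $(n-k)$-face $F_{S_0,b_0}$ (where $F_{S,b}$ denotes the face of $[0,1]^n$ fixing $x_i = b_i$ for $i \in S$), and take a $G$-invariant $c \ones \in \LS_0^k(P)$. I unwind the iterated $\LS_0$ decomposition coordinate by coordinate: at each level, the current $G$-invariant point is written as a convex combination of two pieces (one with the chosen variable at $0$, one at $1$), and each piece is symmetrized under its point stabilizer in $G$---which acts transitively on the remaining coordinates by $(k+1)$-transitivity. After $k$ such steps I obtain a representation $c \ones = \sum_{b \in \{0,1\}^k} w_b \cdot (b, c_b \ones_{n-k})$ with each $(b, c_b \ones_{n-k}) \in P$. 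By $k$-transitivity, $P$ misses every face of type $t = |b_0|_1$, so no valid $c_b$ exists for $|b|_1 = t$, forcing $w_b = 0$ on these. Combining the resulting linear relations on the $w_b$'s and $c_b$'s with the symmetrized $P$-constraints and integer-emptiness of $P$ yields the required infeasibility.

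For \text{(A)}$\Rightarrow$\text{(B)}, I construct a $G$-invariant feasible point of $\SA^k(P)$. By $(k+1)$-transitivity, the lifted variables $y_T$ for $|T| \leq k+1$ in a $G$-invariant solution depend only on $|T|$, so $\SA^k(P)$ collapses to a finite-dimensional LP in scalars $u_0 = 1, u_1, \ldots, u_{k+1}$: every SA constraint---positivity of the linearized product moments $v_{a,c} := \sum_{t=0}^{c}(-1)^t \binom{c}{t} u_{a+t} \geq 0$, and each $P$-constraint multiplied by products of degree at most $k$---collapses into a linear inequality in the $u_s$'s. Using the points $p^{S,b} \in P \cap F_{S,b}$ guaranteed by \text{(A)}, I define the $u_s$'s as the symmetric moments of an appropriately averaged mixture of these points; feasibility of the reduced LP then follows because each $p^{S,b}$ satisfies the original $P$-constraints, and hitting every face ensures that the requisite moment sequence admits a valid exchangeable realization.

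The main obstacle lies in \text{(A)}$\Rightarrow$\text{(B)}. Beyond the symmetric reduction, verifying that the reduced LP is feasible precisely when \text{(A)} holds requires carefully aligning the combinatorial content of \text{(A)}---non-emptiness of every codimension-$k$ face intersection---with the algebraic SA moment constraints at degree $k+1$; the linearization step $x_i^2 = x_i$ means the candidate moments must be constructed as if arising from an abstract exchangeable Bernoulli-type distribution on Hamming weights, rather than as direct averages of the fractional vectors $p^{S,b}$. The $(k+1)$-transitivity hypothesis is essential precisely because it matches the degree of the SA lifting, collapsing the entire moment vector into the univariate sequence $(u_s)$; with only $k$-transitivity, residual parameters would remain and the matching with \text{(A)} could not be made exact.
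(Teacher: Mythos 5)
Your cycle \ref{mainthm-b}$\Rightarrow$\ref{mainthm-c}$\Rightarrow$\ref{mainthm-d}$\Rightarrow$\ref{mainthm-a}$\Rightarrow$\ref{mainthm-b} and your two guiding ideas---Reynolds-operator symmetrization and collapsing the lifted moment vector to scalars by $(k+1)$-transitivity---match the paper's proof, so this is not a different route; the problem is that the core implication is left unfinished. For \ref{mainthm-a}$\Rightarrow$\ref{mainthm-b} you correctly reduce to a system in scalars $u_0,\dots,u_{k+1}$, but you never exhibit a solution: you first propose taking moments of a mixture of the face points $p^{S,b}$, then immediately retract it (``the candidate moments must be constructed as if arising from an abstract exchangeable Bernoulli-type distribution\ldots rather than as direct averages''), and leave the actual construction as an acknowledged ``main obstacle.'' That obstacle \emph{is} the theorem. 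The paper resolves it by building the symmetric face representatives $\overline{x}^{\ell}=(1,\dots,1,0,\dots,0,\Delta_\ell,\dots,\Delta_\ell)\in P$, writing the locally-consistent-distribution conditions of Proposition~\ref{prop:SA_characterization} as the explicit linear system of Lemma~\ref{cl:system-in-gamma} in the scalars $(\gamma,\lambda)$, and then solving it in closed form via the determinant formula $\det A^{t}_{\Theta}=\omega^{t}_{\Theta}$ (Lemmas~\ref{lem:decompose-detA} and~\ref{linearsystem:determinant_theorem}) and the explicit $\overline\gamma,\overline\lambda$---together with a strict-positivity verification that crucially uses $\Delta_\ell\in(0,1)$, i.e.\ integer-emptiness. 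None of that is present in your plan, so \ref{mainthm-a}$\Rightarrow$\ref{mainthm-b} is a genuine gap.

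For \ref{mainthm-d}$\Rightarrow$\ref{mainthm-a}, the contrapositive framing is logically fine, but the closing step (``combining the resulting linear relations\ldots yields the required infeasibility'') is a hope rather than an argument. You also leave implicit the key fact that makes the unwinding work: at each level the coordinate being split is strictly fractional, because the current point lies in the integer-empty $P$ and has been symmetrized under the pointwise stabilizer of the already-fixed indices, so all remaining coordinates are equal and therefore cannot all be integral. This is exactly what guarantees both $\LS_0$ branches exist at every step, which is what the paper's direct argument (iterated Lemma~\ref{lemma:D_implies_A_2}) exploits to produce, for \emph{every} $q\in\{0,1\}^k$, a point of $P$ matching $q$ on $[k]$; a single permutation then lands it in any prescribed $(n-k)$-face, without ever needing to reason about the weights $w_b$. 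You should drop the contrapositive bookkeeping and run the direct argument, making the fractionality observation explicit.
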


We defer to Section~\ref{section:preliminaries} the definition of the hierarchies $\SA^k$, $\LS^k$, and $\LS_0^k$. 
A polytope $P$ is $G$-invariant for a permutation group $G$ over $[n]:=\{1,\ldots,n\}$, if under the action of the elements of $G$, given by $\pi x:= (x_{\pi^{-1}(1)},\ldots,x_{\pi^{-1}(n)})$, we have that $P=\pi(P)$ for all $\pi\in G$. 
A group $G$ is $k$-transitive if it can map any ordered sequence of $k$ indices in $[n]$ to any other ordered sequence of $k$ indices in $[n]$ by some element in $G$. 
A polytope is $k$-transitive if it is $G$-invariant for a $k$-transitive group $G$; 
observe that $(k+1)$-transitivity implies $k$-transitivity. 
Thus, the larger the $k$, the {\it more symmetric} $P$ is. 

\smallskip 

A high-level description of Theorem~\ref{main_theorem} is as follows. Assume that the polytope $P\subseteq [0,1]^n$ is integer-empty and ``symmetric enough'', i.e., $(k+1)$-transitive. 
Theorem~\ref{main_theorem} implies that we can decide whether $k$ rounds of any of the three hierarchies certify that $P$ is integer-empty just by looking at the intersection of $P$ with the faces of the hypercube $[0,1]^n$. In particular, one of the hierarchies gives us such a certificate if and only if all of them do. 

It is well-known that, for any $k \in \mathbb{N}$ and polytope $P\subseteq [0,1]^n$,  $\SA^k(P)\subseteq \LS^k(P) \subseteq \LS_0^k(P)$, and in general they can be significantly different. In contrast, in our setting, we observe a {\it hierarchy collapse}: the three methods are equally effective in determining whether the polytope is integer-empty. 
This result may explain why the integer-emptiness of symmetric polytopes is hard to prove for LP-based hierarchies: the strongest of the three operators does not outperform the weakest one. Moreover, if the other features stay constant, a ``more symmetric'' polytope (i.e., $k$-transitive for larger $k$) will ``resist'' these operators for at least as many rounds as a ``less symmetric'' polytope.

Although the concept of $k$-transitivity is well-known in group theory--see, e.g.,~\cite[Sec. 7]{dixon+mortimer96}--we believe that using it as a way of measuring the level of symmetry of a polytope--especially in the context of Integer Programming--is one of the contributions of this work. We briefly discuss in Example~\ref{ex:STS} how our approach can be employed to obtain bounds on the number of rounds the hierarchies need to improve the bound given by an LP relaxation. We further discuss in Appendix~\ref{appendix:hypothesis-thm} how relaxing some of the hypotheses of Theorem~\ref{main_theorem} makes the statement false.

\smallskip

\noindent \emph{Organization of the paper.} In Section~\ref{section:applications}, we discuss some applications and consequences of Theorem~\ref{main_theorem}. 
In Section~\ref{section:preliminaries} we introduce the operators that will be used throughout the paper, along with definitions and results on permutation groups that are needed for Theorem~\ref{main_theorem}. In Section~\ref{section:AimpliesB} we prove Theorem~\ref{main_theorem}, after introducing additional tools from polyhedral and group theory. 
The implication (A)$\Rightarrow$(B) of Theorem~\ref{main_theorem} is the technical core of this work, thus it is the one we dedicate most space to. Section~\ref{section:conclusion} concludes with a discussion and open questions.

\smallskip

\noindent \emph{Further related work.}\label{sec:related-work}
Linear and semidefinite hierarchies offer systematic procedures for strengthening an initial relaxation of an integer program~\cite{lasserre2001global,lovasz+schrijver91,sherali1990hierarchy,sherali+adams90}; see, e.g.,~\cite{laurent03} for a detailed comparison.
They are used as an automatic way to generate stronger relaxations, while keeping efficiency at low levels of the hierarchy.
However, their success in improving integrality gaps is limited, and symmetries partially explain this phenomenon.
Machine scheduling showcases this behavior: while Sherali-Adams and Lasserre are not able to close the gap after linearly many rounds over the symmetric assignment relaxation, they yield approximation schemes after symmetry-breaking~\cite{kurpisz_semidefinite_2016,verdugo_breaking_2020}. 
Kurpisz et al.~\cite{kurpisz_sum--squares_2020} study the impact of symmetries for Lasserre and show a sufficient condition to prove rank lower bounds whenever the solutions and constraints are invariant under every permutation of $[n]$.

Several approaches for handling symmetries in IPs are found in the literature, including symmetry-breaking procedures in the formulation or in the algorithms employed; see, e.g.,~\cite{faenza2009extended,kaibel2011orbitopal,kaibel2008packing} and the survey by Pfetsch and Rehn for a detailed exposition~\cite{pfetsch_computational_2019}.
Towards the goal of providing a unified perspective, van Doornmalen and Hojny~\cite{van2025unified} recently introduced a framework that captures many of the existing symmetry-handling methods. 
In the context of LP hierarchies, Ostrowski~\cite{ostrowski_using_2014} explored the impact of isomorphism pruning for symmetric Sherali-Adams relaxations.

\section{Examples and applications}\label{section:applications}

\begin{example}{($3$-dimensional example).}\label{ex:3D} We first illustrate our theorem on the case of a 3-dimensional polytope. 
Let $\text{conv}(S)$ denote the convex hull of a set $S\subseteq \mathbb{R}^n$. 
In Figure~\ref{fig:3dimExample} we consider the polytope $P=\text{conv}(\{\pi(0,1/2,1): \pi\in \mathcal{S}_3\})$, that is, the convex hull of all vectors obtained by permuting the coordinates of $(0,1/2,1)$ in every possible way. 
It is easy to see that $P$ is $3$-transitive by construction (and hence $2$-transitive). In the left figure, one can see the original polytope. 
Observe that $P$ intersects every $2$-dimensional face of $[0,1]^3$. On the right, we depict $\SA^1(P)$, $\LS^1(P)$, and $\LS_0^1(P)$. As predicted by Theorem~\ref{main_theorem}, when taking $k=1$ these three polytopes are non-empty; however they are not necessarily equal as $\SA^1(P)=\LS^1(P)\subsetneq\LS_0^1(P)$. On the other hand, since $P$ does not intersect every $1$-dimensional face (e.g., the face defined by $x_1=1$ and $x_2=1$ does not intersect $P$), Theorem~\ref{main_theorem} implies that $\SA^2(P)=\LS^1(P)=\LS_0^1(P)=\emptyset$.
\end{example}

\begin{figure}
\begin{center}
\includegraphics[width=0.35\textwidth]{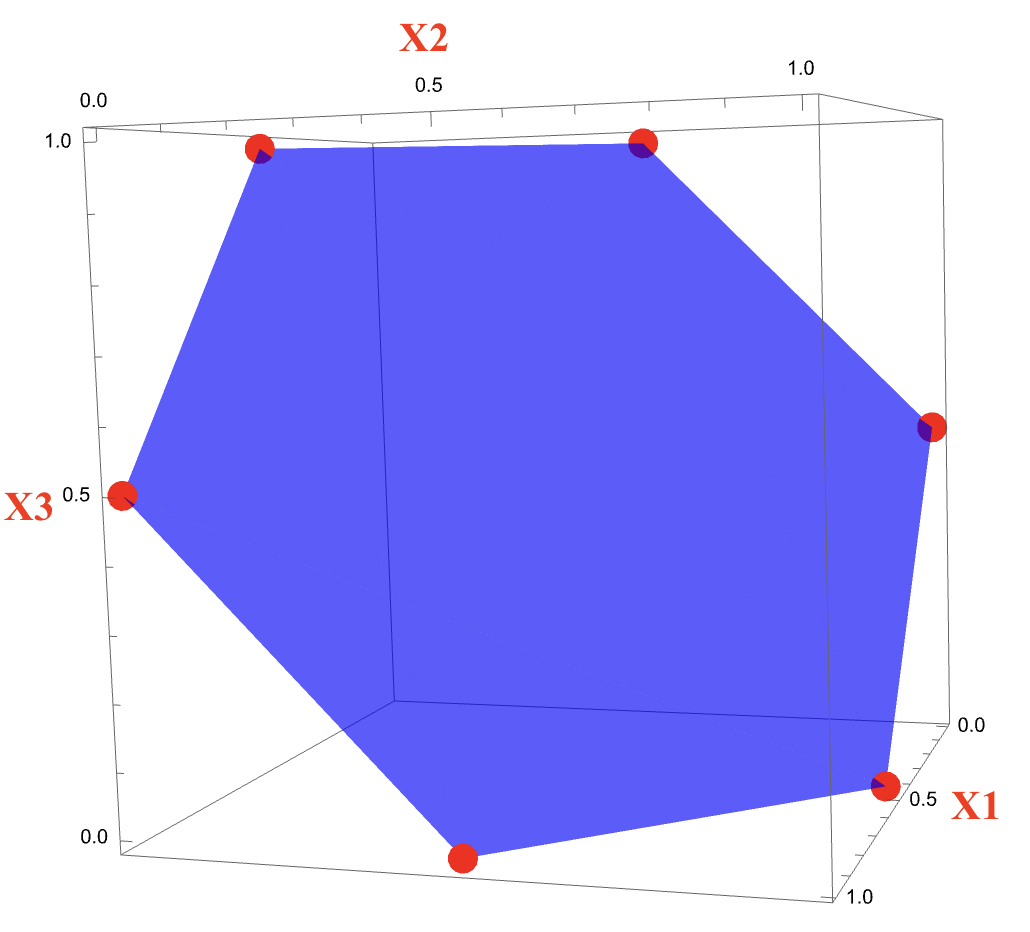}
\hspace{2cm}
\includegraphics[width=0.35\textwidth]{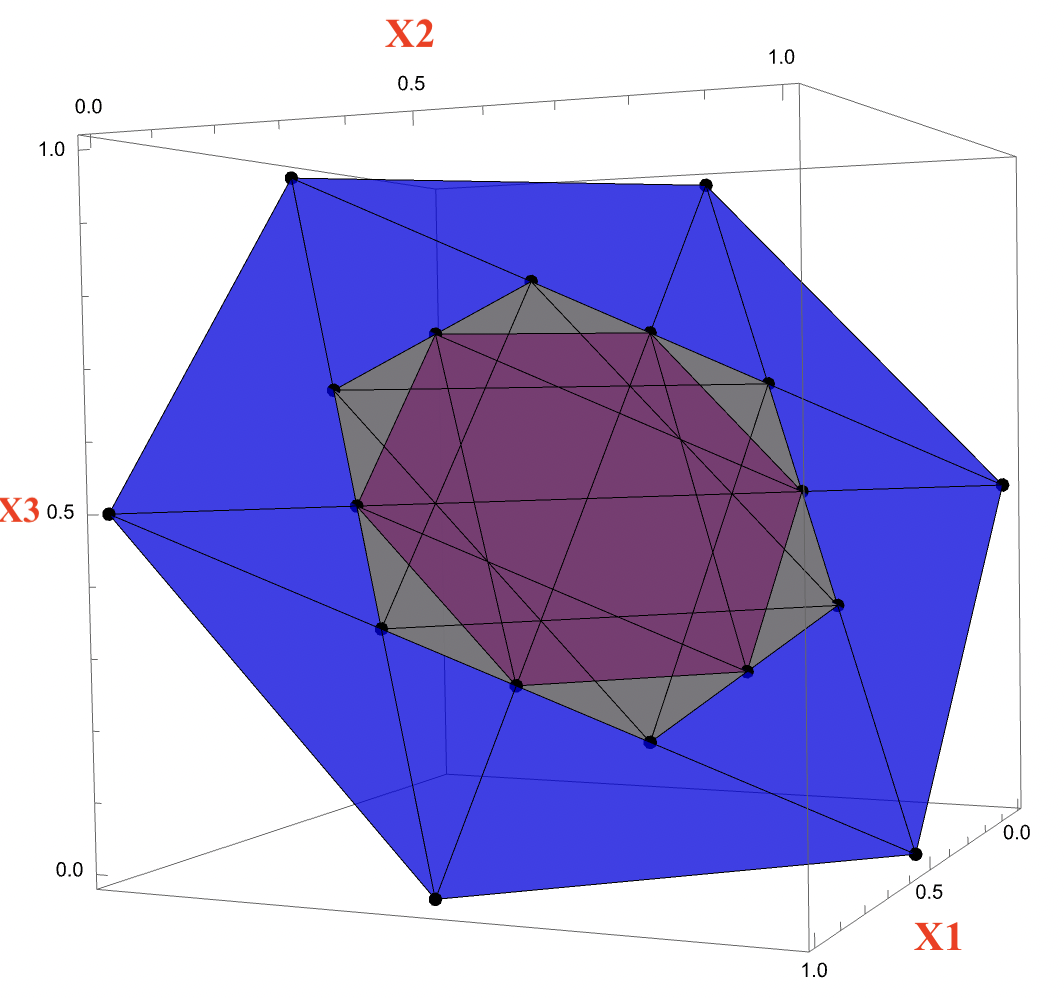}
\end{center}
\caption{A depiction of $P=\text{conv}(\{\pi(0,1/2,1): \pi\in \mathcal{S}_3\})$ (left), $\SA^1(P)=\LS^1(P)$ (right, violet), and $\LS_0^1(P)$ (right, gray).}
\label{fig:3dimExample}
\end{figure}

\begin{example}{(Steiner triple cover and LP bounds).}\label{ex:STS} Starting with the work of Fulkerson, Nemhauser, and Trotter~\cite{fulkerson2009two}, Steiner Triple Covering Problems (STSs) have been studied computationally by many researchers in Integer Programming~\cite{feo1989probabilistic,mannino1995solving,ostrowski_using_2014,ostrowski2011solving}. Most of the work in the area has focused on two classes of STSs, whose number of variables is a power of $3$, or $5$ times a power of $3$. For $n \in \N$, we let STS$_n$ be STSs instances with $3^n$ variables, and give a formal description of such instances in Appendix~\ref{app:example:steiner}. For $n \in \N_{\geq 2}$, the feasible region of the LP relaxation of STS$_n$ is known to be $2$-transitive~\cite{hall1960automorphisms} (it is also easy to argue that it is not $3$-transitive, see again Appendix~\ref{app:example:steiner} for details). One can add a (non-optimal) objective function cut so that the resulting optimal face of the Linear Programming relaxation satisfies the hypothesis of Theorem~\ref{main_theorem} with $k=1$, and property (A) (see again Appendix~\ref{app:example:steiner}). Thus, one round of any of the three hierarchies considered in this paper does not suffice to improve the bound given by the LP relaxation where the cut has been added.
STS$_n$ illustrates how Theorem~\ref{main_theorem} can be employed to give a bound on the number of rounds we need to apply the hierarchies for, in order to improve the bound given by the LP relaxation of a (symmetric) integer program.
\end{example}

\begin{example}{(Parity and Knapsack-Cover).}
Understanding the strength of hierarchies to prove approximate bounds has attracted significant attention. In particular, constructing lower bounds is a technically challenging but important task in order to understand the limitation of such techniques. The Sum-of-Squares (SoS) hierarchy, a stronger construction than $\SA$, $\LS$, and $\LS_0$, has been studied extensively in this setting. The starting point of several of these constructions~\cite{laurent_lower_2003,meka_sum--squares_2015,kurpisz_sum--squares_2016} was given by Grigoriev~\cite{grigoriev2001complexity}, who showed that $k=\lfloor n/2 \rfloor$ rounds of the SoS hierarchy are needed to prove that $P=\{x\in [0,1]^n: \sum_{i=1}^n x_i =n/2\}$ is integer-empty when $n$ is odd. Noting that $P$ is $\mathcal{S}_n$-invariant, and thus $n$-transitive, our main theorem implies directly that the (weaker) Sherali-Adams hierarchy needs exactly $k+1=\lfloor n/2 \rfloor+1$ rounds to prove that $P$ is integer-empty. Although this is not a new result, it gives a very simple proof for this fact and exemplifies the usefulness of our techniques: note that it suffices to find a vector in the intersection of $P$ with each $(n-k)$-dimensional face of $[0,1]^n$, a straightforward exercise for this polytope. Moreover, we note that the construction of a feasible point of $\SA^k(P)$ we use in the proof of (A)$\Rightarrow$(B) in Theorem~\ref{main_theorem} is a strict generalization of the ideas of Grigoriev~\cite{grigoriev2001complexity} to full-dimensional polytopes. 

Another application considers the Knapsack-Cover problem (also known as Min-Knapsack). While the basic linear relaxation of this problem has an unbounded integrality gap, the introduction of knapsack-cover inequalities reduces this gap to 2. Despite efforts to tighten this relaxation using hierarchies like Sherali-Adams, Lovász-Schrijver, and SoS, the integrality gap remains at 2 even after several rounds. We show that our construction generalizes the approach by Bienstock and Zuckerberg~\cite{daniel_bienstock_approximate_2003,bienstock_subset_2004}, offering a simpler method to generate feasible vectors in the Sherali-Adams hierarchy. 
For a specific instance, the knapsack-cover linear relaxation yields $$P=\textstyle\min\left\{\sum_{i=1}^n x_i: \sum_{i=1}^n x_i\ge 1+\frac{1}{n-1}, \sum_{j\in [n]\setminus\{i\}}x_j\ge 1,x\in [0,1]^n\right\}.$$
In Appendix~\ref{app:KCover}, we show that our techniques of Section~\ref{section:AimpliesB} gives a direct method for constructing a solution in $x\in \SA^k(P)$ with value $\sum_{i=1}^n x_i\le 1+O(k/n)$, while the optimal integral solution has value 2, which implies an integrality gap of $2$ as long as $k$ is sublinear, $k=o(n)$. 

We emphasize that in both constructions our explicit solution is guaranteed to belong to the Sherali-Adams hierarchy; we leave as an open problem if the same solution belongs to the tighter SoS hierarchy at the same level.
\end{example}

\begin{example}{(Cropped cube).}
The cropped cubed is a polytope obtained by cropping each vertex of the 0-1 hypercube,  given by $$P^{cc} := \textstyle\left\{ x \in [0,1]^{n} \, : \, \sum_{i \in I} x_{i} + \sum_{i \notin I} (1-x_{i}) \geq 1/2, \, \text{ for all } I \subseteq [n] \right\}.$$
This polytope is integer-empty and $n$-transitive. Goemans and Tun\c{c}el~\cite{goemans+tuncel01} showed that it resists $n-1$ rounds of the Lovász-Schrijver operator, meaning that $\LS^{n-1}(P^{cc})\neq \emptyset$. Later, Laurent~\cite{laurent_lower_2003} extended this result, showing that even $\SA^{n-1}(P^{cc})\neq \emptyset$. Theorem~\ref{main_theorem} implies directly these results: we only need to observe that $P^{cc}$ intersects every $n-1$ dimensional face of $[0,1]^n$. Namely, we need to verify that for any two disjoint sets $S_0,S_1\subseteq[n]$ with $|S_0\cup S_1|=n-1$, there is a vector $x\in P^{cc}$ with $x_i=0$ if $i\in S_0$ and $x_i=1$ for $i\in S_1$. To do so, simply define the only free variable as $x_i=1/2$, for $i\in[n]\setminus(S_1\cup S_2)$.
\end{example}

\section{Preliminaries}\label{section:preliminaries}

For $n \in \N$ we denote $\{1,\dots,n\}$ by $[n]$ and $[0] := \emptyset$. For $i,j \in \Z_{\geq 0}$ and $x \in \R^{n}$ we write $\binom{i}{j} := 0$ and $\prod_{j}^{i} x_{j} := 1$ if $j > i$. Moreover, we define $\binom{i}{r} := 0$ and $\prod_j^r x_j:=1$ if $r \in \Z_{<0}$. We denote by $\ones$ and $\zeros$ the all-ones vector and the all-zeros vector, respectively. Moreover, we write $\binom{n}{\leq i} := \sum_{j=0}^{i} \binom{n}{j}$. For $\emptyset \neq S \subseteq [n]$ and subsets $J_{0},J_{1} \subseteq S$, we say that $(J_{0},J_{1})$ is a $2$-partition of $S$ if $J_{0} \cup J_{1} = S$ and $J_{0} \cap J_{1} = \emptyset$, and we denote it as $J_{0} \sqcup J_{1}  = S$. For any vectors $y \in \R^{\binom{n}{\leq j}}$ and $z \in \R^{\binom{n}{\leq i}}$, let $(y,z)$ denote their vertical concatenation, i.e., $(y,z) := \bigl(\!\begin{smallmatrix} y \\ z \end{smallmatrix}\!\bigr)$. Whenever $I = \{j\}$, we simply write $y_I=y_{j}$. Throughout this section, we let $m \in \mathbb{N}$ and $P := \{ x \in \R^{n} \, : \, a_{i}^\top x \geq b_{i}, i \in [m]\} \subseteq [0,1]^{n}$. For technical reasons, we will assume that the description of $P$ explicitly contains the linear inequalities $\zeros \le x\le \ones$.

\subsection{Linear Programming Hierarchies} 

We now present the Linear Programming hierarchies of interest for this work. 
For a polytope $P$ as above, let
\begin{equation*}
    P^{j,1} := P \cap \{ x \in \R^{n} \, : \, x_{i} = 1\} \quad \text{and}\quad P^{j,0} := P \cap \{ x \in \R^{n} \, : \, x_{i} = 0\} \text{ for $j \in [n]$.}
\end{equation*}
 We denote by $P_{i}$ the \emph{Balas-C\'eria-Cornu\'ejols operator}~\cite{balas1993lift} by the variable $x_{i}$, i.e., $P_{i}(P) := \conv( P^{i,1} \cup P^{i,0})$. Hence the \emph{Lift-and-Project closure} of $P$ can be written as $\LS_0(P):=\bigcap_{i\in[n]} P_{i}(P)$. We will employ a useful characterization of the Lift-and-Project closure (see, e.g.,~\cite{lovasz+schrijver91,goemans+tuncel01}): 
\begin{align*}
    \LS_{0}(P) := \{ x \in \R^{n} \, : \, &\exists Y \in \R^{(n+1) \times (n+1)}, \, Ye_{i}, Y(e_{0} - e_{i}) \in K(P), \forall i \in [n],\\
    &Ye_{0} = Y^{\top} e_{0} = \text{diag}(Y) = (1, x)^{\top} \},
\end{align*}
where the matrices $Y$ have columns and rows indexed by $\{0,1,\ldots,n\}$, $e_i$ denotes the $i$-th canonical vector, and $K(P) := \{ (\lambda, \lambda x)^{\top}:\lambda \in \R_{+}, \, x \in P\}$. 
We denote by $\LS_{0}^{k}$ the $k$ iterative applications of $\LS_{0}$. 
That is, $\LS_{0}^{2}(P)=\LS_{0}(\LS_{0}(P))$, $\LS_{0}^{3}(P)=\LS_{0}(\LS_{0}^2(P))$, etc. The \emph{Lov\'asz-Schrijver} operator, $\LS$, further requires the lifting matrices $Y$ to be symmetric, and we similarly denote repeated applications of $\LS$ by $\LS^{k}$. 
Following~\cite{fleming+etal19}, we define the \emph{Sherali-Adams level $k$} relaxation $\M^{k}(P)$ of $P$~as follows:
\begin{enumerate}[label=(\roman*)]
    \item \textbf{Extend}: For every row $a_{i}^\top x \geq b_{i}$, with $i \in [m]$, every set $S \subseteq [n]$, $|S|=k$ and every $2$-partition $J_{0} \sqcup J_{1} = S$, introduce a new constraint
    \begin{equation}\label{definition:SA_ineq}
         \prod_{i \in J_{1}} x_{i} \, \prod_{j \in J_{0}} (1-x_{j}) \cdot (a_{i}^\top x - b_{i} \geq 0).
    \end{equation}
    \item \textbf{Linearize}: multilinearize each of the constraints in the previous step by replacing every monomial $\prod_{i \in I} x_{i}$ with a new variable $y_{I}$, define $y_{\emptyset} := 1$, and set $x_{i}^{2} = x_{i}$: \begin{equation*}\label{definition:Mk}
    \M^{k}(P):= \left\{ y \in \R^{\binom{n}{\leq k+1}} \, : \, \text{multilinearized inequalities~\eqref{definition:SA_ineq}} \right\}.
    \end{equation*}
\end{enumerate}
We denote by $\SA^{k}(P)$ the (coordinate) projection of $\M^{k}(P)$ onto the space of original variables, i.e.,
\begin{equation*}
    \SA^{k}(P) := \{ x \in \R^{n}  : \text{there exists } y \in \M^{k}(P), \, y_{i} = x_{i}, \, i \in [n] \}.
\end{equation*}
\noindent Let $P_{I}$ be the integer hull of $P$. It is well known that for all $k \in [n]$ we have $P_{I} \subseteq \SA^{k}(P) \subseteq \LS^{k}(P) \subseteq \LS_{0}^{k}(P) \subseteq P$. Moreover, $P_{I} = \LS_{0}^{n}(P)$; see, e.g.,~\cite{cook+dash01,laurent03}.

\smallskip 

Fix $d\leq k \leq n$ and a polytope $P\subseteq [0,1]^n$. The Sherali-Adams hierarchy has the following \emph{locally consistent distribution property}: for every point $y \in \M^k(P)$ and every set of $d$ coordinates $S\subseteq [n]$, we can write $y$ as convex combination of certain points $z^{J_0,J_1}$ (for any $J_0\sqcup J_1=S$) that belong to $\M^{k-d}(P)$ and are integer on coordinates from $S$. The Sherali-Adams hierarchy shares this property with any stronger hierarchy, such as SoS. However, it can be shown that this property \emph{characterizes} the Sherali-Adams hierarchy (see, e.g.,~\cite{fleming+etal19}): $y$ belongs to $\M^k(P)$ if and only if the vectors $z^{J_0,J_1}$ belong to $\M^{k-d}(P)$. To make this property formal, we begin by introducing some notation.

Let $d,k \in \Z_{\geq 0}$ such that $d \leq k$ and $y \in [0,1]^{\binom{n}{\leq k+1}}$. For $S \subseteq [n]$ of cardinality $d$ and a $2$-partition $J_0 \sqcup J_1 = S$, we define:
\begin{equation}\label{eq:z}   y_{I}^{J_{0},J_{1}} := \sum_{H \subseteq J_{0}} (-1)^{|H|} y_{J_{1} \cup I \cup H}, \quad
    z^{J_{0},J_{1}}(y)_{I}:=
    \begin{cases}
    \frac{y^{J_{0},J_{1}}_{I}}{y^{J_{0},J_{1}}_{\emptyset}} & \text{if $y_{\emptyset}^{J_{0},J_{1}} > 0$,} \\
    y^{J_{0},J_{1}}_{I} & \text{if $y_{\emptyset}^{J_{0},J_{1}} = 0$,}
    \end{cases}
\end{equation}
for every $I \subseteq [n]$ with $|I| \leq k-d + 1$.

\begin{proposition}\label{prop:SA_characterization} 
   Let $P\subseteq [0,1]^n$,  $d,k \in \Z_{\geq 0}$ such that $d \leq k$ and $y \in [0,1]^{\binom{n}{\leq k+1}}$. Then $y \in \M^{k}(P)$ if and only if for every $S \subseteq [n]$ of cardinality $d$:
    \begin{enumerate}[label=(\arabic*)]
        \item For every $2$-partition $J_{0} \sqcup J_{1} = S$, we have $z^{J_{0},J_{1}}(y) \in \M^{k-d}(P)$; and
        \item The point $y$ restricted to its first $\binom{n}{\leq k-d+1}$ coordinates can be written as follows 
\begin{equation}\label{prop:SA_characterization:convexcombination}
        y_{I} =
        \sum_{\substack{J_{0} \sqcup J_{1} = S}}
        y_{\emptyset}^{J_{0},J_{1}} z_{I}^{J_{0},J_{1}}(y),
        \end{equation}
        for every $I \subseteq [n]$ with $|I| \leq k-d + 1$.
    \end{enumerate}
\end{proposition}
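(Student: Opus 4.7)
The plan is to view $y\in [0,1]^{\binom{n}{\leq k+1}}$ as a pseudo-expectation functional $\tilde{\mathbb{E}}_y$ on multilinear polynomials of degree at most $k+1$, defined by $\tilde{\mathbb{E}}_y[\prod_{i\in I}x_i]:=y_I$ and extended linearly. Under this reading, $y\in \M^k(P)$ iff $\tilde{\mathbb{E}}_y[\prod_{i\in J_1}x_i \prod_{j\in J_0}(1-x_j)\,g(x)] \ge 0$ for every inequality $g(x)\ge 0$ describing $P$ and every partition $J_0\sqcup J_1 = S$ with $|S|=k$. Moreover, $y_I^{J_0,J_1}$ equals $\tilde{\mathbb{E}}_y[\prod_{i\in I}x_i \prod_{i\in J_1}x_i \prod_{j\in J_0}(1-x_j)]$, and $z^{J_0,J_1}(y)$ is the ``conditional'' pseudo-distribution given the event $\{x_i=1,\, i\in J_1;\, x_j=0,\, j\in J_0\}$, renormalized to $z_\emptyset=1$ when possible.

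Two identities drive the argument. First, the partition-of-unity $\sum_{J_0\sqcup J_1 = S}\prod_{i\in J_1}x_i\prod_{j\in J_0}(1-x_j) = 1$, once multiplied by $\prod_{i\in I}x_i$ and multilinearized, yields the purely algebraic equation $\sum_{J_0\sqcup J_1=S} y_I^{J_0,J_1} = y_I$. Second, a multiplier-composition identity: after multilinearization,
\begin{equation*}
\prod_{i\in J_1}x_i\prod_{j\in J_0}(1-x_j)\cdot \prod_{i\in J_1'}x_i\prod_{j\in J_0'}(1-x_j)\cdot g(x) = \prod_{i\in J_1\cup J_1'}x_i\prod_{j\in J_0\cup J_0'}(1-x_j)\cdot g(x),
\end{equation*}
provided $J_0\cap J_1'=J_1\cap J_0'=\emptyset$ (otherwise both sides vanish via $x_\ell(1-x_\ell)=0$). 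Concretely, this identity equates the value of an $\M^{k-d}(P)$ constraint at $z^{J_0,J_1}(y)$ with that of an $\M^k(P)$ constraint at $y$, up to the factor $y_\emptyset^{J_0,J_1}$.

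For the forward direction, assume $y\in \M^k(P)$. Condition (2) follows from the partition-of-unity identity together with the vanishing $y_I^{J_0,J_1}=0$ whenever $y_\emptyset^{J_0,J_1}=0$, which is a consequence of the monotonicity $0\le y_I^{J_0,J_1}\le y_\emptyset^{J_0,J_1}$ on $\M^k(P)$; this monotonicity is derivable from $\M^k(P)$ constraints via the non-negative decomposition $1-\prod_{i\in I}x_i = \sum_{H\subsetneq I}\prod_{i\in H}x_i\prod_{j\in I\setminus H}(1-x_j)$, padded with dummies $x_\ell+(1-x_\ell)=1$ to match multiplier size $k$. Condition (1) follows from the composition identity: every $\M^{k-d}(P)$ constraint on $z^{J_0,J_1}(y)$, multiplied by $y_\emptyset^{J_0,J_1}$, becomes an $\M^k(P)$ constraint on $y$ indexed by the combined set $S\cup S'$, hence is non-negative. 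For the backward direction, consider an arbitrary $\M^k(P)$ constraint on $y$ indexed by a partition of a $k$-set $S^*$ with inequality $g\ge 0$; pick any $S_0\subseteq S^*$ of size $d$ and split the multiplier as $M_{S_0}\cdot M_{S_1}$. By the composition identity, the constraint reduces to $y_\emptyset^{J_0,J_1}\cdot \tilde{\mathbb{E}}_{z^{J_0,J_1}(y)}[M_{S_1}\cdot g]$, where $(J_0,J_1)$ is the partition of $S_0$ induced by $M_{S_0}$. By (1), $z^{J_0,J_1}(y)\in \M^{k-d}(P)$, so the inner term is non-negative; multiplying by $y_\emptyset^{J_0,J_1}\ge 0$ gives the desired inequality.

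The main obstacle is the careful bookkeeping around multilinearization---especially when index sets overlap, where one must use $y_I^{J_0,J_1}=0$ if $I\cap J_0\ne\emptyset$ and $y_I^{J_0,J_1}=y_{I\cup J_1}^{J_0,\emptyset}$ if $I\cap J_1\ne\emptyset$---and the delicate handling of the degenerate case $y_\emptyset^{J_0,J_1}=0$, which is precisely where condition (2) enforces the consistency between the conditional pseudo-distributions and the global one, and where $\M^{k-d}(P)$ membership in (1) must be interpreted so that the conditional inequalities remain meaningful.
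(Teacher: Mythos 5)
The paper does not prove Proposition~\ref{prop:SA_characterization}: it states it and points to~\cite{fleming+etal19}, so there is no ``paper's own proof'' against which to compare. Your reconstruction follows the standard pseudo-expectation/conditioning route, which is indeed the approach in the cited reference, and the two identities you isolate---the partition-of-unity identity $\sum_{J_0\sqcup J_1=S}\prod_{i\in J_1}x_i\prod_{j\in J_0}(1-x_j)=1$ and the multiplier-composition identity---are exactly what drives the argument. The forward direction for~(2) is correct modulo the monotonicity $0\le y_I^{J_0,J_1}\le y_\emptyset^{J_0,J_1}$, which you obtain (as you should) from SA-1-type constraints after padding to degree $k+1$; the forward direction for~(1) via composition, and the backward direction for SA-2 via splitting the multiplier, are the right moves.

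Two gaps remain. First, the degenerate case $y_\emptyset^{J_0,J_1}=0$: you flag it as delicate, but under a literal reading of~\eqref{eq:z} the unnormalized vector $z^{J_0,J_1}(y)$ then has $z_\emptyset^{J_0,J_1}(y)=0\ne 1$, so it cannot belong to $\M^{k-d}(P)$ and the forward implication for condition~(1) actually fails (e.g.\ $P=\{x\in[0,1]^n:x_1\ge 1\}$, $S=\{1\}$, $J_0=\{1\}$). The statement must be read so that degenerate pairs are either excluded from~(1) or their $z$-vector is replaced by an arbitrary point of $\M^{k-d}(P)$; saying this explicitly and then verifying that~(2) still holds because the degenerate terms contribute $y_\emptyset^{J_0,J_1}z_I^{J_0,J_1}(y)=0=y_I^{J_0,J_1}$ (via monotonicity) would close this. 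Second, the backward direction never establishes the SA-0 constraint $y_\emptyset=1$: conditions~(1) and~(2) together give only the tautology $y_\emptyset=\sum_{J_0\sqcup J_1=S}y_\emptyset^{J_0,J_1}$, since each non-degenerate $z^{J_0,J_1}(y)$ has $z_\emptyset=1$ by normalization regardless of the value of $y_\emptyset$. Concretely, for $d=0$ your argument reduces to ``$y/y_\emptyset\in\M^k(P)$ implies $y\in\M^k(P)$,'' which is false for $y_\emptyset\ne 1$. The normalization $y_\emptyset=1$ has to be taken as an additional hypothesis on $y$ (as it implicitly is everywhere the paper applies the proposition, e.g.\ via $\overline\gamma_0=1$ in Lemma~\ref{cl:system-in-gamma}), and your backward argument should state this rather than leave SA-0 unverified.
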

It is worth emphasizing that in the context of \eqref{prop:SA_characterization:convexcombination}, due to \eqref{eq:z}, it holds that $\sum_{\substack{J_{0} \sqcup J_{1} = S}} y_{\emptyset}^{J_{0},J_{1}} =1$, and thus the vector $y$ is a convex combination of the vectors $\{z^{J_0,J_1}\}_{J_{0} \sqcup J_{1} = S}$. Moreover, vectors $\{z^{J_0,J_1}\}_{J_{0} \sqcup J_{1} = S}$ are integer in the coordinates from $S$.

\subsection{Symmetries} 

Let $\Sym_{n}:=\{\text{bijection }\pi: [n]
\rightarrow [n]\}$ denote the set of all permutations on $[n]$. For $\pi,\pi' \in \Sym
_n$, we denote their (function) composition $\pi'\circ\pi$ as $\pi'\pi$, and by $\pi^{-1}$ the inverse function of $\pi$. We say that a subset $G \subseteq \Sym_{n}$ is a \emph{(permutation) group} if $G$ is nonempty and it is closed under compositions and inverses. We write $H \leq G \leq {\cal S}_n$ to denote that $H,G$ are permutation groups with $H\subseteq G$. The \emph{set of (permutation) symmetries acting on $P$} is defined as
$G(P) :=\{ \pi \in \Sym_{n} \, : \, \pi x \in P \,\text{ for all } x \in P\},$ where $\pi x:=(x_{\pi^{-1}(1)},x_{\pi^{-1}(2)},\ldots,x_{\pi^{-1}(n)}).$
The following fact is well-known and easy to check.
\begin{lemma}\label{lem:group-P}
$G(P)$ is a permutation group.
\end{lemma}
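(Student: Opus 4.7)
The plan is to verify the three defining properties of a permutation group for the set $G(P)$: nonemptiness, closure under composition, and closure under inverses.

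First I would observe that the identity permutation $e \in \Sym_{n}$ belongs to $G(P)$, since $ex = x \in P$ for every $x \in P$; this shows $G(P) \neq \emptyset$.

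Next, for closure under composition, let $\pi, \pi' \in G(P)$ and $x \in P$. I would verify that the formula $\pi x := (x_{\pi^{-1}(1)},\ldots,x_{\pi^{-1}(n)})$ defines a left group action of $\Sym_n$ on $\R^n$. Indeed, the $i$-th coordinate of $\pi'(\pi x)$ is $(\pi x)_{(\pi')^{-1}(i)} = x_{\pi^{-1}((\pi')^{-1}(i))} = x_{(\pi'\pi)^{-1}(i)}$, so $\pi'(\pi x) = (\pi'\pi)x$. Since $\pi x \in P$ by assumption on $\pi$, and then $\pi'(\pi x) \in P$ by assumption on $\pi'$, we conclude $(\pi'\pi)x \in P$, so $\pi'\pi \in G(P)$.

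For closure under inverses, the cleanest route is to exploit the finiteness of $\Sym_n$: every permutation $\pi \in \Sym_n$ has finite order, say $m \in \N$, so that $\pi^m = e$ and consequently $\pi^{-1} = \pi^{m-1}$. By the previous step, $G(P)$ is closed under composition, so $\pi^{m-1} \in G(P)$, which gives $\pi^{-1} \in G(P)$. (Alternatively, one could argue geometrically: $\pi$ acts as a linear bijection of $\R^n$ preserving Lebesgue measure, and $\pi(P) \subseteq P$ together with equal volumes forces $\pi(P) = P$; but the order argument is shorter and avoids measure-theoretic considerations.) There is no real obstacle here — the only mildly subtle point is confirming that the convention $\pi x = (x_{\pi^{-1}(i)})_i$ is indeed a left action rather than a right action, which is exactly what forces the inverse in the indexing.
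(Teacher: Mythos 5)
Your proof is correct. The paper states this lemma as a well-known fact without giving a proof, so there is nothing to compare against; your argument does the job cleanly. The verification of the left-action identity $\pi'(\pi x) = (\pi'\pi)x$ is exactly what closure under composition requires under the convention $(\pi x)_i = x_{\pi^{-1}(i)}$, and the finite-order argument for inverses ($\pi^{-1} = \pi^{m-1}$ with $m$ the order of $\pi$, which lies in $G(P)$ by repeated composition together with $e \in G(P)$) neatly sidesteps the issue that the definition of $G(P)$ a priori only gives $\pi(P) \subseteq P$, not $\pi(P) = P$. One small remark: the parenthetical volume argument you mention would need care if $P$ were lower-dimensional (Lebesgue measure zero), whereas the order argument applies verbatim to any polytope, so your instinct to prefer it is right.
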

The group $G(P)$ contains (possibly strictly) the set of permutation symmetries of any linear formulation for $P$. We refer to~\cite{margot2010,pfetsch_computational_2019} for details. 

Let $k \in [n]$ and $G\leq {\cal S}_n$. $G$ is \emph{$k$-transitive} if, for every pair of $k$-tuples $t^1$ and $t^{2}$ of $[n]$, where the elements of each tuple are all-different, there exists a permutation $\pi \in G$ such that $\pi(t^{1}_{j}) = t^{2}_{j}$ for $j \in [k]$. We say that $P$ is \emph{$G$-invariant} if $G \leq G(P)$ and it is $k$-transitive if moreover $G$ is $k$-transitive.

Given a subset $S \subseteq [n]$ and $G\leq {\cal S}_n$, we call the subgroup of $G$ which fixes pointwise each element of $S$  the \emph{pointwise stabilizer of $S$} and we denote it by $G_{S} := \{ \pi \in G: \pi(i) = i, \, \forall \pi \in G, \forall i \in S\}$. 
Given any $x \in \mathbb{R}^n$, we define the \emph{orbit of $x$ under $G$} as $\orb_{G}(x) := \{ \pi x \, : \, \pi \in G\}$ where $\pi x := (x_{\pi^{-1}(i)})_{i \in [n]}$. Moreover, we define the \emph{Reynolds operator}~\cite{derksen+kemper02} $\beta_{G}: \R^{n} \to \R^{n}$ as 
    \begin{equation*}
        \beta_{G}(x) := \frac{1}{|G|}\sum_{\pi \in G} \pi x
    \end{equation*}
for $x \in \R^{n}$.  If $x \in P$ for some polytope $P$ and $G\leq G(P)$, then the convexity of $P$ implies that $\beta_{G}(x) \in P$ for every $x \in P$. We prove the following fact in Appendix~\ref{appendix:proof_fixG}.

\begin{proposition}\label{proposition:fixG}
Let $P\subseteq [0,1]^n$ be a polytope, $G\leq G(P)$, and $S$ a subset of $[n]$ of cardinality at most $k$, for some $k \in \mathbb{Z}_{\geq 0}$. Assume that $G$ is $(k+1)$-transitive.
    Then for any $x \in P$, the point $\beta_{G_{S}}(x)$ belongs to  $P$. Moreover, for $i \in [n]$, it has entries:
    \begin{equation*}
        \beta_{G_{S}}(x)_{i} = 
        \begin{cases}
        x_{i} & \text{if $i\in S$}, \\
        \frac{ \sum_{j \in [n]\setminus S} x_{j}} {n - |S|} & \text{if $i \notin S$}.
        \end{cases}
    \end{equation*}
\end{proposition}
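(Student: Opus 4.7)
The plan is to split the claim into three parts: first verify that $\beta_{G_{S}}(x) \in P$, then compute the entries indexed by $S$, and finally compute the entries indexed by $[n]\setminus S$. Membership in $P$ is immediate: for every $\pi \in G_{S} \subseteq G \leq G(P)$ we have $\pi x \in P$, and $\beta_{G_{S}}(x)$ is by definition the average of these points, so convexity of $P$ gives $\beta_{G_{S}}(x) \in P$. For $i \in S$, every $\pi \in G_{S}$ fixes $i$ pointwise, so $\pi^{-1}(i)=i$ and $(\pi x)_{i} = x_{i}$, which makes the average equal to $x_{i}$.

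The interesting case is $i \notin S$. The key structural observation I would prove first is the following transitivity lemma: under the hypotheses, $G_{S}$ acts transitively on $[n]\setminus S$. To show it, write $S = \{s_{1},\dots,s_{d}\}$ with $d = |S| \leq k$ and pick arbitrary $a,b \in [n]\setminus S$. Then $(s_{1},\dots,s_{d},a)$ and $(s_{1},\dots,s_{d},b)$ are two $(d+1)$-tuples of pairwise distinct elements with $d+1 \leq k+1$. Since $G$ is $(k+1)$-transitive, it is in particular $(d+1)$-transitive (any two $(d+1)$-tuples can be extended to $(k+1)$-tuples using the $n-d-1 \geq n-k-1$ available indices, and then mapped into each other), so there exists $\pi \in G$ with $\pi(s_{j})=s_{j}$ for all $j$ and $\pi(a)=b$. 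By construction $\pi \in G_{S}$, giving the claimed transitivity.

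Now, using that $G_{S}$ is a group, the substitution $\pi \mapsto \pi^{-1}$ rewrites $\beta_{G_{S}}(x)_{i}$ as $\frac{1}{|G_{S}|}\sum_{\pi \in G_{S}} x_{\pi(i)}$. The evaluation map $\pi \mapsto \pi(i)$ from $G_{S}$ to $[n]\setminus S$ is surjective by the transitivity lemma, and by the orbit-stabilizer theorem each fibre has exactly $|G_{S}|/(n-|S|)$ elements. Collecting terms by their image,
\begin{equation*}
\beta_{G_{S}}(x)_{i} = \frac{1}{|G_{S}|}\sum_{j \in [n]\setminus S}\frac{|G_{S}|}{n-|S|}\,x_{j} = \frac{\sum_{j \in [n]\setminus S} x_{j}}{n-|S|},
\end{equation*}
which is the desired formula.

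The main (and really only) non-routine obstacle is the transitivity lemma, i.e., lifting the global $(k+1)$-transitivity of $G$ to transitivity of the pointwise stabilizer $G_{S}$ on $[n]\setminus S$. Everything else is then a clean orbit-stabilizer counting argument plus convexity of $P$.
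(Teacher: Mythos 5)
Your proof is correct. Both you and the paper rest the fractional case on the same key observation—that the pointwise stabilizer $G_{S}$ acts transitively on $[n]\setminus S$, which follows from $(k+1)$-transitivity of $G$ since $|S|\leq k$—but you finish the computation differently. The paper shows that transitivity forces all coordinates $\beta_{G_{S}}(x)_{i}$ with $i\notin S$ to be \emph{equal} (by an explicit change-of-variables $\pi\mapsto h\pi$ in the sum defining the Reynolds operator), and then extracts the common value from the identity $\|\beta_{G_{S}}(x)\|_{1}=\|x\|_{1}$, since the coordinates on $S$ are preserved. You instead compute the coordinate $\beta_{G_{S}}(x)_{i}$ directly: after the standard substitution $\pi\mapsto\pi^{-1}$, you invoke the orbit--stabilizer theorem to show that each $j\in[n]\setminus S$ is hit by exactly $|G_{S}|/(n-|S|)$ elements of $G_{S}$ under $\pi\mapsto\pi(i)$, and the formula drops out. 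Your route is a bit more self-contained (it produces the value in one step rather than proving equality and then solving), and it makes explicit the reduction from $(k+1)$-transitivity to $(d+1)$-transitivity for $d=|S|$, a step the paper elides; the paper's route avoids the orbit--stabilizer theorem in favor of a purely elementary sum-preservation argument. Both are valid and essentially equivalent in substance.
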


In all the definitions given above, we omit the subscript if the group is clear from the context.   For $G\leq {\cal S}_n$ we say that $G$ is \emph{transitive on $T\subseteq [n]$} if for every $i,j \in T$, there exists $\pi \in G$ such that $\pi (i)= \pi(j)$. Thus, in particular, $G$ is $1$-transitive if it is transitive on $[n]$. The proof of the following fact can be found in~\cite[Section 7.1]{dixon+mortimer96}. 
\begin{proposition}\label{proposition:ktransitive}
    A permutation group $G \leq {\cal S}_{n}$ is $(k+1)$-transitive if and only if $G$ is $k$-transitive and for each $S \subseteq [n]$, with $|S| = k$, we have that $G_S$ is transitive on $[n]\setminus S$.
\end{proposition}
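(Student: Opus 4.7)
The plan is to prove the two directions separately, using the tuple-based definition of $(k+1)$-transitivity directly. The key observation is that pointwise stabilizers of $k$-element sets naturally correspond to the ``extra coordinate'' in a $(k+1)$-tuple: fixing the first $k$ entries reduces $(k+1)$-transitivity to $1$-transitivity on the remaining $n-k$ points.

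For the forward direction ($\Rightarrow$), assume $G$ is $(k+1)$-transitive. First, $k$-transitivity is immediate: given two $k$-tuples of distinct elements, extend each arbitrarily by a $(k+1)$-st distinct coordinate and apply $(k+1)$-transitivity; the resulting permutation maps the original $k$-tuples correctly. Next, fix $S = \{s_1, \ldots, s_k\} \subseteq [n]$ and let $i, j \in [n] \setminus S$. I would apply $(k+1)$-transitivity to the two tuples $(s_1, \ldots, s_k, i)$ and $(s_1, \ldots, s_k, j)$ to obtain $\pi \in G$ with $\pi(s_\ell) = s_\ell$ for all $\ell \in [k]$ and $\pi(i) = j$; this $\pi$ lies in $G_S$ and witnesses transitivity of $G_S$ on $[n]\setminus S$.

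For the converse ($\Leftarrow$), let $(a_1, \ldots, a_{k+1})$ and $(b_1, \ldots, b_{k+1})$ be two $(k+1)$-tuples of distinct elements of $[n]$. By $k$-transitivity, pick $\sigma \in G$ with $\sigma(a_\ell) = b_\ell$ for $\ell \in [k]$. Setting $S = \{b_1, \ldots, b_k\}$, since $\sigma$ is a bijection and $a_{k+1}$ is distinct from $a_1, \ldots, a_k$, we have $\sigma(a_{k+1}) \in [n] \setminus S$; likewise $b_{k+1} \in [n] \setminus S$. By the hypothesis on $G_S$, there is $\tau \in G_S$ with $\tau(\sigma(a_{k+1})) = b_{k+1}$. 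Then $\tau\sigma \in G$ sends $(a_1, \ldots, a_{k+1})$ to $(b_1, \ldots, b_{k+1})$, establishing $(k+1)$-transitivity.

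Neither direction presents a genuine obstacle; the whole argument is essentially bookkeeping on tuples. The only subtlety worth double-checking is the injectivity argument used in the converse, namely that $\sigma(a_{k+1}) \notin S$: this follows because $\sigma^{-1}(S) = \{a_1, \ldots, a_k\}$ and $a_{k+1}$ was assumed distinct from these. Everything else reduces to composing the two given permutations.
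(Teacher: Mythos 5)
Your proof is correct and complete. The paper does not prove this proposition itself but defers to Dixon and Mortimer, Section 7.1; your argument is the standard textbook one that reference gives: the forward direction extracts $k$-transitivity by extending $k$-tuples and gets transitivity of $G_S$ by applying $(k+1)$-transitivity to tuples agreeing on the first $k$ coordinates, while the converse composes a permutation from $k$-transitivity with one from the stabilizer, with the needed injectivity check that $\sigma(a_{k+1}) \notin S$.
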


We can naturally extend the \emph{permutation group action} of $\mathcal{S}_{n}$ over singletons $\{ i \} \subseteq [n]$ to arbitrary subsets $S \subseteq [n]$ as: $\pi (S) := \{ \pi (i) \, : \, i \in S\}$ for all $\pi \in \mathcal{S}_{n}$. Hence we can define the following \emph{action} on $\R^{\binom{n}{\leq k+1}}$:
\begin{equation}\label{action_on_R^2^n}
(\pi x)_{S} := x_{\pi^{-1}(S)}
\end{equation}
for every $x \in \R^{\binom{n}{\leq k+1}}$, $S \subseteq [n]$ and $\pi \in \mathcal{S}_{n}$. See \cite{dixon+mortimer96} for further background on permutation groups and their actions. The following is an observation that is implicit in the literature, see e.g. Ostrowski~\cite{ostrowski_using_2014}. For completeness, its proof is given in Appendix~\ref{appendix:proof_symmetry-is-inherited}.

\begin{proposition}\label{proposition:symmetry-is-inherited}
Let $G \leq G(P)$ and $k \in \N$. Then $\M^{k}(P)$ is $G$-invariant.
\end{proposition}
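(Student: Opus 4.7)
The plan is to verify directly that for any $y\in\M^{k}(P)$ and $\pi\in G$, the vector $\pi y$ (defined by $(\pi y)_I := y_{\pi^{-1}(I)}$) satisfies every multilinearized Sherali--Adams inequality appearing in the definition of $\M^{k}(P)$, and therefore $\pi y\in\M^{k}(P)$.

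First I would record the key algebraic correspondence between the action on the $y$-variables and a pullback on the $x$-variables. For a polynomial $p(x)=\sum_{J\subseteq[n]} c_J\prod_{i\in J}x_i$, let $L_p(y):=\sum_J c_J\,y_J$ denote its multilinearization, viewed as a linear form in the $y$-variables via $y_I\leftrightarrow\prod_{i\in I}x_i$. Re-indexing ($J'=\pi^{-1}(J)$) gives
\begin{equation*}
L_p(\pi y)\;=\;\sum_{J}c_J\,y_{\pi^{-1}(J)}\;=\;\sum_{J'}c_{\pi(J')}\,y_{J'}\;=\;L_{p\circ\pi}(y),
\end{equation*}
where $(p\circ\pi)(x):=p(\pi x)$. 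So evaluating the multilinearization at $\pi y$ is the same as first pulling $p$ back by the permutation action on $x$ and then multilinearizing.

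Next I would specialize to the Sherali--Adams generator
\begin{equation*}
p(x)\;=\;\prod_{i\in J_1}x_i\,\prod_{j\in J_0}(1-x_j)\cdot(a_\ell^\top x-b_\ell),
\end{equation*}
associated with a row $\ell\in[m]$ and a $2$-partition $J_0\sqcup J_1=S$ with $|S|=k$. Using $(\pi x)_i=x_{\pi^{-1}(i)}$ together with the identity $a_\ell^\top(\pi x)=(\pi^{-1}a_\ell)^\top x$ (where $(\pi^{-1}a)_i:=a_{\pi(i)}$), a direct computation yields
\begin{equation*}
p(\pi x)\;=\;\prod_{i\in\pi^{-1}(J_1)}x_i\,\prod_{j\in\pi^{-1}(J_0)}(1-x_j)\cdot\bigl((\pi^{-1}a_\ell)^\top x-b_\ell\bigr),
\end{equation*}
which is again of the form of a Sherali--Adams generator, now built from the $2$-partition $(\pi^{-1}(J_0),\pi^{-1}(J_1))$ of the $k$-subset $\pi^{-1}(S)$ and the linear inequality $(\pi^{-1}a_\ell)^\top x\geq b_\ell$.

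Finally, since $\pi^{-1}\in G\leq G(P)$, the permutation $\pi^{-1}$ sends the defining system of $P$ to itself -- taking, as is standard in this setting, the formulation of $P$ to be $G(P)$-invariant as a set of inequalities (e.g., the facet description). Hence there exists $\ell'\in[m]$ with $(\pi^{-1}a_\ell,b_\ell)=(a_{\ell'},b_{\ell'})$, and $p(\pi x)$ is precisely the SA generator indexed by $(\ell',\pi^{-1}(S),\pi^{-1}(J_0),\pi^{-1}(J_1))$. Its multilinearization $L_{p\circ\pi}$ is therefore among the defining inequalities of $\M^{k}(P)$, so $y\in\M^{k}(P)$ satisfies $L_{p\circ\pi}(y)\geq0$, equivalently $L_p(\pi y)\geq0$. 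Ranging over all $\ell,S,J_0,J_1$ gives $\pi y\in\M^{k}(P)$, as claimed. The main obstacle is the careful bookkeeping of indices in the second step, verifying that the action on $y$-indices corresponds cleanly to a substitution on $x$-variables that returns an SA generator of the \emph{same} system; a secondary subtlety is that $G(P)$ need not permute the rows of an arbitrary formulation of $P$, which is why one works with a $G(P)$-invariant description.
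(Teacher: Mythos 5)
Your proof follows the same overall strategy as the paper: translate the action on $y$-variables into a pullback on the $x$-polynomials, show that the pullback of a Sherali--Adams generator is again a Sherali--Adams generator (for the permuted index sets and the permuted linear constraint), and conclude using the fact that $G(P)$ preserves the constraint system. Your algebraic identity $L_p(\pi y)=L_{p\circ\pi}(y)$ is correct and is the clean abstract version of what the paper does concretely with the vectors $r^{J_0,J_1}$ and the lemma that $a^\top\pi y\geq 0\iff(\pi^{-1}a)^\top y\geq0$.

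There is, however, one genuine gap in the last step. You dispatch the formulation-dependence of $\M^k(P)$ by ``taking, as is standard, the formulation of $P$ to be $G(P)$-invariant (e.g., the facet description).'' But $\M^k(P)$ as defined in the paper is a function of the \emph{given} inequality description $\{a_i^\top x\geq b_i\}$, which is fixed at the outset of Section~\ref{section:preliminaries} and is not assumed to be $G(P)$-invariant (and in applications, e.g.\ Example~\ref{ex:STS}, it is a set-covering formulation rather than the facet description). So ``just use the facet description'' changes the object you are reasoning about; what you need is to show that replacing the given formulation by a symmetrized one does not change $\M^k$. That is precisely the content of the paper's Lemma~\ref{lemma:SAaddIneq}, which proves that adding an inequality $\alpha^\top x\geq\beta$ implied by $Ax\geq b$ leaves $\M^k(\{Ax\geq b\})$ unchanged; combined with Lemma~\ref{lm:CharacterizationG(P)} (which guarantees that $(\pi a_i)^\top x\geq b_i$ is valid whenever $\pi\in G(P)$), this legitimizes closing the constraint set under $G$ without altering $\M^k(P)$. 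Your proof identifies the issue but asserts rather than proves the resolution; supplying the analogue of Lemma~\ref{lemma:SAaddIneq} (or citing it) would close the gap.
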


A similar statement holds for $\LS_{0}$ and $\LS$. The proof is in Appendix~\ref{app:symmetricOps}.

\begin{proposition}\label{proposition:symmetry_preserving}
    Let $G \leq G(P)$, $k \in \N$. Then $\LS_{0}^k(P)$ and $\LS^k(P)$ are $G$-invariant.
\end{proposition}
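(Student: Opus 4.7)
The plan is to prove by induction on $k$ that each iteration of $\LS_{0}$ (resp., $\LS$) preserves $G$-invariance; since $P$ is $G$-invariant by hypothesis, this yields the statement for $\LS_{0}^{k}(P)$ and $\LS^{k}(P)$. The inductive step reduces to showing that if $P$ is $G$-invariant, then so is $\LS_{0}(P)$ (and $\LS(P)$), i.e., for any $x\in \LS_{0}(P)$ and any $\pi\in G$, we have $\pi x\in \LS_{0}(P)$. To do this, we will use the lifting matrix characterization given in Section~\ref{section:preliminaries} and construct a witness matrix for $\pi x$ by conjugating the witness matrix for $x$ by a natural extension of the action of $\pi$ to the index set $\{0,1,\ldots,n\}$.

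More concretely, I would first extend the action of each $\pi \in \mathcal{S}_n$ to $\{0,1,\ldots,n\}$ by setting $\pi(0)=0$. Given $x \in \LS_{0}(P)$ with witness $Y \in \R^{(n+1)\times(n+1)}$, define $Y' \in \R^{(n+1)\times(n+1)}$ by $Y'_{ij} := Y_{\pi^{-1}(i),\pi^{-1}(j)}$. I would then verify the three defining conditions:
\begin{enumerate}[label=(\roman*)]
    \item $Y'e_{0} = (Y'^{\top}e_{0}) = \operatorname{diag}(Y') = (1,\pi x)^{\top}$: since $\pi^{-1}(0)=0$, this follows from $Ye_{0}=Y^{\top}e_{0}=\operatorname{diag}(Y)=(1,x)^{\top}$ by unwinding the definition of the action in~\eqref{action_on_R^2^n}.
    \item $Y'e_{i}\in K(P)$ for every $i\in[n]$: writing $Ye_{\pi^{-1}(i)} = (\lambda,\lambda z)^{\top}$ with $z\in P$ and $\lambda\geq 0$, a direct computation gives $Y'e_{i}=(\lambda,\lambda\,\pi z)^{\top}$, and the $G$-invariance of $P$ ensures $\pi z\in P$.
    \item $Y'(e_{0}-e_{i})\in K(P)$: analogous to the previous point, starting from $Y(e_{0}-e_{\pi^{-1}(i)})\in K(P)$.
\end{enumerate}

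For the Lov\'asz--Schrijver operator, I would additionally check that conjugation preserves symmetry: if $Y=Y^{\top}$, then $Y'_{ij}=Y_{\pi^{-1}(i),\pi^{-1}(j)}=Y_{\pi^{-1}(j),\pi^{-1}(i)}=Y'_{ji}$, so the witness matrix for $\pi x$ remains symmetric. Combining the two constructions with the inductive hypothesis $\LS_{0}^{k-1}(P)$ (resp.~$\LS^{k-1}(P)$) is $G$-invariant then gives the result at level $k$.

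I expect no real obstacle beyond careful bookkeeping: the main subtlety is consistently handling the convention $\pi(0)=0$ when extending the action of $\mathcal{S}_{n}$ to the $(n+1)$-indexed rows and columns of the lifting matrix, and making sure the action on vectors of $\R^{n+1}$ is compatible with the action on $\R^{n}$ used to define $\pi x$. Once this is set up, the verification of the three conditions is a routine application of the identity $(Mv)_{j}=\sum_{\ell}M_{j\ell}v_{\ell}$ combined with the $G$-invariance of $P$.
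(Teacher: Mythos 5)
Your proposal is correct and matches the paper's proof essentially verbatim: both reduce to a single iteration, define the conjugated witness $(\pi\cdot Y)_{ij}=Y_{\pi^{-1}(i),\pi^{-1}(j)}$ with $\pi(0)=0$, verify the column conditions via $(\pi\cdot Y)e_i = \pi\cdot(Ye_{\pi^{-1}(i)})$ and the $G$-invariance of $K(P)$, and observe that conjugation preserves symmetry of $Y$ for the $\LS$ case.
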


\section{Proof of Theorem~\ref{main_theorem}}\label{section:AimpliesB}

The most complex part of the proof is the  \ref{mainthm-a}$\Rightarrow$\ref{mainthm-b} implication, for which we give below a high-level description, and then some details. We then briefly discuss the other implications.

\subsection{\texorpdfstring{Proof of \ref{mainthm-a} $\Rightarrow$ \ref{mainthm-b}}{Proof of \ref{mainthm-a} => \ref{mainthm-b}}}

\paragraph{A high-level description of the proof.} Assume that~\ref{mainthm-a} holds. Let $P$ be a polytope with $G\leq G(P)$ and $G$ $(k+1)$-transitive. We apply Proposition~\ref{prop:SA_characterization} with $d = k$. Thus, we want to exhibit a point $\overline{y}$ in $\M^{k}(P)$, and show that, for every $S\subseteq [n]$ with $|S|=k$, it can be supported by the points ${z}^{J_{0},J_{1}}(\overline{y}) \in \M^{0}(P)$ as in Proposition~\ref{prop:SA_characterization}. However, our construction will proceed in reverse: we build the supporting points first and then show that they satisfy~\eqref{eq:z} and~\eqref{prop:SA_characterization:convexcombination} for some $\overline{y}$. In particular, we perform the following steps.

\smallskip 

\noindent \emph{1.~Construction of points from $\M^0(P)$.}  By leveraging the $(k+1)$-transitivity of $P$, for every $S\subseteq [n]$ with $|S|=k$ and $J_{0} \sqcup J_{1} = S$, we first construct ``highly symmetric'' points, i.e., points $x^{J_0,J_1} \in P$ such that, for any $S'\subseteq[n]$ with $|S'|=k$ and $J_0'\sqcup J_1'$ with $|J_1|=|J_1'|$, we have $x^{J_0,J_1}=\pi x^{J_0',J_1'}$ for some $\pi \in G$. 
By definition, $(1,x^{J_0,J_1}) \in \M^0(P)$. 

\smallskip 

\noindent \emph{2.~System of constraints defining $\overline y$.} Note that any point $\overline y$ that satisfies~\eqref{eq:z} and~\eqref{prop:SA_characterization:convexcombination} with respect to our constructed vectors $z^{J_0,J_1}:=(1,x^{J_0,J_1})$ is guaranteed to belong to $\M^k(P)$. Imposing these conditions leads to a system of equations whose variables are the coordinates of $\overline{y}$. This system appears to be highly complex and non-tractable. However, because of the symmetric structure of $z^{J_0,J_1}$, we look for solutions $\overline{y}$ that are highly symmetric. Indeed, for every $I,J\subseteq [n]$ with $|I|=|J|\leq k+1$ and every $J_0\sqcup J_1=S$, $J_0'\sqcup J_1'=S'$ with $|J_{1}| = |J_{1}'|$ and $|S|=|S'|=k$, we impose the following conditions:
\vspace{-0.2em}
\begin{enumerate}[label=(C-\arabic*), leftmargin=*]
    \item $\overline y_I = \overline y_J =: \overline \gamma_{|I|}$;
    \item $\overline y_{\emptyset}^{J_{0},J_{1}} = \overline y_{\emptyset}^{J'_{0},J'_{1}}=:\overline \lambda_{|J_1|}$. 
\end{enumerate}
Conditions (C-1) and (C-2) above lead to a (much smaller) system in the variables $\gamma$ and $\lambda$, which have $k+1$ components each. Thus, we are left to show that this system has a solution. This is the most technical part of the proof, divided into two steps.

\smallskip

\noindent{\emph{3.~Solution to the system defining $\overline \gamma$.} We first consider a subsystem of linear equalities in the variables $\gamma$ only, and show that the corresponding constraint matrix is invertible. 

\smallskip 

\noindent{\emph{4.~Solution to the general system.} Based on the previous step, we give an explicit formula for $\overline \gamma$ and $\overline \lambda$, and proceed to show that they satisfy all constraints of the system in $\gamma$ and $\lambda$ defined in Step~2.

\medskip 
We now present full details on Step 1.~above, and the main lemma leading to Step 2. The proof of this lemma, as well as the remaining steps of the proof, appear in Appendix~\ref{app:proof-main-theorem}. 

\paragraph{1.~Construction of points from $\M^0(P)$.}

For $\ell \in \{0,\dots,k\}$, define 
\begin{equation*}
    P_{\ell} := P \cap \{x \in [0,1]^{n} \, : \, x_{i} = 1 \, \text{for all } i \in [\ell], x_{j} = 0 \, \text{for all } j \in [k]\setminus [\ell] \}, 
    \end{equation*}
and observe that it is non-empty because of Assumption~\ref{mainthm-a}. Thus, let $x^\ell \in P_{\ell}$. Recall that $G_{[k]}$ is the pointwise stabilizer of $[k]$. Observe that $P_{\ell}$ is $G_{[k]}$-invariant. Moreover, since $G$ is $(k+1)$-transitive, by Proposition~\ref{proposition:ktransitive}, $G_{[k]}$ is transitive on $[n]\setminus [k]$. Therefore, by applying Proposition~\ref{proposition:fixG} to $x^{\ell}$ with $G_{[k]}$ and $P_{\ell}$ we obtain
\begin{equation}\label{eq:xell}
\overline{x}^\ell := \beta_{G_{[k]}}(x^{\ell}) = (\underbrace{1,\dots,1}_\ell,\underbrace{0,\dots,0}_{k-\ell},\underbrace{\Delta_\ell,\dots,\Delta_\ell}_{n-k}) \in P_{\ell},
\end{equation} 
for some $\Delta_\ell \in [0,1]$. 
Define $\Delta := (\Delta_{0},\dots,\Delta_{k}) \in [0,1]^{k+1}$ and observe that $\Delta \in (0,1)^{k+1}$ since $P$ is integer-empty. 

Next, let $S\subseteq [n]$ with $|S|=k$ and consider an arbitrary $2$-partition $J_{0} \sqcup J_{1} = S$ such that $|J_{1}| = \ell$. By definition, the point $\overline{x}^{\ell}$ has $\ell$ ones in entries indexed by $[\ell]$ and zeros in entries indexed by $[k]\setminus [\ell]$. Since $G$ is $(k+1)$-transitive and $G\leq G(P)$, there exists some permutation $\pi \in G$ and some point $\overline x^{J_{0},J_{1}} \in P$ such that $\pi$ maps: all of the ones in positions $[\ell]$ of $\overline{x}^{\ell}$ to entries indexed by $J_{1}$ of $\overline x^{J_{0},J_{1}}$; all of the zeros in positions $[k]\setminus[\ell]$ to entries indexed by $J_{0}$; and the fractional $\Delta_{\ell}$'s in positions $[n]\setminus[k]$ to entries indexed by $[n]\setminus S$. In other words,
\begin{equation}
    \overline x^{J_{0},J_{1}}_{i} := (\pi \overline{x}^{\ell})_{i} = 
    \begin{cases}
    0 & \text{if $i \in J_{0}$}, \\
    1 & \text{if $i \in J_{1}$}, \\
    \Delta_{\ell} & \text{if $i \in [n] \setminus S$}.
    \end{cases}
\end{equation}
By definition of $\M^0(P)$, we immediately deduce the following. \begin{lemma}\label{cl:z-in-M0}
For each $S\subseteq [n]$ with $|S|=k$ and $2$-partition $J_{0} \sqcup J_{1} = S$, we have $(1, \overline x^{J_0,J_1}) \in \M^0(P)$.
\end{lemma}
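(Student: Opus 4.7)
The plan is to observe that this lemma is essentially immediate once the definition of $\M^0(P)$ is unpacked. First I would note that at level $k=0$ the ``extend'' step of the Sherali-Adams construction is vacuous: the only subset of $[n]$ of cardinality zero is $S=\emptyset$, and its unique $2$-partition produces the multiplier $1$. Consequently, after linearization, a vector $(1,x)\in\R^{\binom{n}{\leq 1}}$ belongs to $\M^0(P)$ if and only if $x$ satisfies every original inequality $a_i^\top x\ge b_i$, i.e., if and only if $x\in P$. It thus suffices to show that $\overline{x}^{J_0,J_1}\in P$.

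Second, I would argue that this membership is already built into the preceding construction. By definition $\overline{x}^{J_0,J_1}=\pi\overline{x}^\ell$ with $\ell=|J_1|$ and $\pi\in G\le G(P)$, while $\overline{x}^\ell=\beta_{G_{[k]}}(x^\ell)\in P_\ell\subseteq P$ for some $x^\ell\in P_\ell$ (using Proposition~\ref{proposition:fixG}, together with the fact that $G_{[k]}$ pointwise fixes the coordinates used to cut out $P_\ell$ from $P$, so the symmetrization does not leave $P_\ell$). Since $\pi\in G(P)$ preserves $P$, we conclude $\overline{x}^{J_0,J_1}=\pi\overline{x}^\ell\in P$, and combining with the previous paragraph yields $(1,\overline{x}^{J_0,J_1})\in\M^0(P)$.

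I do not anticipate any real obstacle: the lemma is essentially a bookkeeping check confirming that the highly symmetric points produced in Step~1 of the high-level argument lie in the base level of the Sherali-Adams linearization. The only point worth double-checking is that the extend step truly adds nothing at level zero, which follows directly from the conventions $y_\emptyset=1$ and $|S|=0$, so no nontrivial monomial multiplies any constraint.
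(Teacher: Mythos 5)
Your proof is correct and takes essentially the same route as the paper, which simply asserts that the lemma follows ``by definition of $\M^0(P)$'' after constructing $\overline{x}^{J_0,J_1}=\pi\overline{x}^\ell\in P$; you have just spelled out the two bookkeeping observations the paper leaves implicit.
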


\paragraph{2.~System of constraints defining $\overline y$.} 
We give next a sufficient conditions for the existence of $\overline y \in \M^d(P)$. For $t \in \mathbb{N}$ and $\Theta=(\Theta_0,\dots,\Theta_t) \in \R^{t+1}$, we let $A^t_\Theta \in \R^{(t+1)\times(t+1)}$ with rows indices in $\{0,\dots,t\}$ and column indices in $\{1,\dots, t+1\}$ be defined as follows:
\begin{equation}\label{linearsystem:A_definition-correct}
    (A^{t}_{\Theta})_{\ell,r} := \left\{ \begin{array}{crc}
       (-1)^{\ell + r-1} \left[ \binom{t-\ell}{r-\ell-1} + \binom{t-\ell}{r-\ell} \Theta_{\ell} \right] & \mbox{ if $\ell < r$}, \\
       -\Theta_{\ell}  & \mbox{if $\ell = r$},  \\
       0  & \mbox{if $\ell >r$.}
    \end{array}
    \right.
\end{equation}
}

See Example~\ref{ex:matrix-A}} in Appendix~\ref{app:proof-main-theorem} for an example of Matrix $A$.

\begin{lemma}\label{cl:system-in-gamma}
Assume that the following system in the variables $\gamma_0,\dots, \gamma_{k+1} \in \mathbb{R}$, $\lambda_0,\dots, \lambda_{k} \in \mathbb{R}$ has a solution 
\begin{subequations}\label{system-in-gamma}
\begin{align}
        A^{k}_{\Delta} \gamma &= \Delta_{0} (1, 0, \dots, 0), \label{eq:z_in_M0-new} \\
        1 &=
        \sum_{\ell=0}^{k} \binom{k}{\ell} \lambda_{\ell}, \label{eq:convex_combination_emptyset-new} 
        \\
        {\gamma}_{1} &= \sum_{\ell=1}^{k} \binom{k-1}{\ell-1} \lambda_{\ell}, \label{eq:convex_combination_singletons:integer-new} \\
        {\gamma}_{1} &= \sum_{\ell=0}^{k} \binom{k}{\ell} \lambda_{\ell} \Delta_{\ell},  \label{eq:convex_combination_singletons:fractional-new} 
        \\
        \lambda_{\ell} &= \sum_{r =0}^{k-\ell} (-1)^{r} \binom{k-\ell}{r} \gamma_{r + \ell} \quad \hbox{$\forall \ell \in \{0,1,\dots,k\}$}, \label{eq:ugly-inequality} \\
        & {\zeros} \leq \lambda, \gamma  \leq {\ones}, \quad \gamma_{0} = 1 \label{eq:lambda-gamma-nonnegative},
\end{align}
\end{subequations}
that is strictly positive in all components, where we abbreviate $\gamma=(\gamma_1,\dots,\gamma_{k+1})$.

Then, $\overline y \in \M^k(P)$, where \begin{equation}\label{eq:overliney}\overline y_I = \overline \gamma_{|I|} \quad \hbox{for all $I\subseteq [n]$, $|I|\leq k+1$}.\end{equation}
\end{lemma}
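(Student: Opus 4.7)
The plan is to verify the hypotheses of Proposition~\ref{prop:SA_characterization} applied with $d=k$. Setting $\overline y_I := \overline \gamma_{|I|}$ for $|I| \le k+1$ yields a vector in $[0,1]^{\binom{n}{\le k+1}}$ by \eqref{eq:lambda-gamma-nonnegative}. Fixing an arbitrary $S \subseteq [n]$ with $|S| = k$, I must verify two things: (i) that $z^{J_0,J_1}(\overline y) \in \M^0(P)$ for every $2$-partition $J_0 \sqcup J_1 = S$, and (ii) that the convex combination identity \eqref{prop:SA_characterization:convexcombination} holds for the coordinates of $\overline y$ indexed by sets with $|I| \le 1$. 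The backbone of the argument is to recognize each $z^{J_0,J_1}(\overline y)$ as the highly symmetric point $(1, \overline x^{J_0,J_1})$ constructed in Step~1, whose membership in $\M^0(P)$ is already provided by Lemma~\ref{cl:z-in-M0}.

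For (i), I first evaluate $\overline y_\emptyset^{J_0,J_1}$ by grouping subsets $H \subseteq J_0$ in \eqref{eq:z} by cardinality. With $\ell := |J_1|$, this yields $\overline y_\emptyset^{J_0,J_1} = \sum_{r=0}^{k-\ell}(-1)^r\binom{k-\ell}{r}\overline\gamma_{r+\ell} = \overline\lambda_\ell$ via \eqref{eq:ugly-inequality}, which is strictly positive by hypothesis, so the first branch of \eqref{eq:z} applies. I then compute $z_{\{i\}}^{J_0,J_1}(\overline y)$ in three cases. If $i \in J_1$, the set $J_1 \cup \{i\}$ equals $J_1$ and the value becomes $\overline\lambda_\ell / \overline\lambda_\ell = 1$; if $i \in J_0$, pairing subsets $H \subseteq J_0$ containing $i$ with those that do not gives a complete sign cancellation and the value $0$; if $i \in [n]\setminus S$, reindexing yields $\overline y_{\{i\}}^{J_0,J_1} = \sum_{r=0}^{k-\ell}(-1)^r\binom{k-\ell}{r}\overline\gamma_{r+\ell+1}$, and the identity $\overline y_{\{i\}}^{J_0,J_1} = \Delta_\ell \overline\lambda_\ell$ is exactly what row $\ell$ of the linear system \eqref{eq:z_in_M0-new} encodes. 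Combining these values identifies $z^{J_0,J_1}(\overline y)$ with $(1, \overline x^{J_0,J_1})$, establishing (i) via Lemma~\ref{cl:z-in-M0}.

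For (ii), since $\overline y$ and every $z^{J_0,J_1}$ depend only on set cardinalities, \eqref{prop:SA_characterization:convexcombination} reduces to counting $2$-partitions of $S$ by $\ell = |J_1|$: for $I = \emptyset$ the identity becomes $1 = \sum_\ell \binom{k}{\ell}\overline\lambda_\ell$, which is \eqref{eq:convex_combination_emptyset-new}; for $I = \{i\}$ with $i \in S$, only partitions placing $i \in J_1$ contribute (the others give zero), yielding \eqref{eq:convex_combination_singletons:integer-new}; and for $i \notin S$, every partition contributes $\Delta_\ell \overline\lambda_\ell$, yielding \eqref{eq:convex_combination_singletons:fractional-new}. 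All three identities are supplied by the hypothesized positive solution, so Proposition~\ref{prop:SA_characterization} concludes $\overline y \in \M^k(P)$.

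The main technical hurdle will be verifying that the matrix $A^k_\Delta$ of \eqref{linearsystem:A_definition-correct} correctly encodes the identity $\overline y_{\{i\}}^{J_0,J_1} = \Delta_\ell \overline\lambda_\ell$ for $i \notin S$: this requires the substitutions $t = r+\ell+1$ and $t = s+\ell$ in the two alternating sums defining $\overline y_{\{i\}}^{J_0,J_1}$ and $\Delta_\ell \overline\lambda_\ell$, the observation $(-1)^{t-\ell-1} = (-1)^{t+\ell-1}$, and careful tracking of the boundary terms at $t = \ell$ and $t = k+1$, together with the contribution of $\overline\gamma_0 = 1$ which only appears on the right-hand side when $\ell = 0$---which is precisely why the right-hand side of \eqref{eq:z_in_M0-new} is $\Delta_0(1, 0, \dots, 0)$ rather than uniformly zero.
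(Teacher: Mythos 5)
Your proposal is correct and follows essentially the same route as the paper's proof: reduce to Proposition~\ref{prop:SA_characterization} with $d=k$, show $\overline y^{J_0,J_1}_\emptyset = \overline\lambda_{|J_1|}>0$ via \eqref{eq:ugly-inequality}, identify $z^{J_0,J_1}(\overline y)$ with $(1,\overline x^{J_0,J_1})$ through the three coordinate cases (the $i\in J_0$ cancellation being the paper's Lemma~\ref{lem:cancels-out}, the $i\notin S$ case matching row $\ell$ of \eqref{eq:z_in_M0-new}), and reduce the convex-combination identity to the counting identities \eqref{eq:convex_combination_emptyset-new}--\eqref{eq:convex_combination_singletons:fractional-new}. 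The only cosmetic difference is that you compute $z^{J_0,J_1}(\overline y)$ directly for arbitrary $J_0\sqcup J_1$, whereas the paper does the canonical partition $J_1=[\ell]$ and then appeals to symmetry; since $\overline y$ depends on $I$ only through $|I|$, these are interchangeable.
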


As discussed above, the proof of Lemma~\ref{cl:system-in-gamma} and the existence of a solution to the system are given in Appendix~\ref{app:proof-main-theorem}. 

\subsection{Proofs of the remaining statements from Theorem~\ref{main_theorem}} We note that (B) implies (C) and (C) implies (D) are well-known facts, see, e.g.,~\cite{lovasz+schrijver91,laurent03}. We show that (D) implies (A). 
Suppose therefore that $\LS_{0}^{k}(P)$ is nonempty and let $G$ be a $(k+1)$-transitive subgroup of $G(P)$. We make use of the following lemma, proved in Appendix~\ref{app:(D)=>(A)}.

\begin{lemma}\label{lemma:D_implies_A_2}
Let $j \in [k]$ and $i \in [n]$. Suppose that $P$ is $(k+1)$-transitive and that there exists $x \in \LS^{k-j+1}_{0}(P)$ such that $x_{i} \in (0,1)$. Then for every subset $J \subseteq [n]$ with $i \in J$ and  $|J| = j$, there exists $x^{i,1},x^{i,0} \in \LS^{k-j}_{0}(P)$ such that: a) $x_{i}^{i,1} = 1$ and $x_{i}^{i,0} = 0$; b) $x^{i,1}_{\ell} = x^{i,1}_{r}$ and $x^{i,0}_{\ell} = x^{i,0}_{r}$ for all $\ell,r \in [n] \setminus J$. 
\end{lemma}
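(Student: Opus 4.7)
The plan is to combine the Balas--Céria--Cornuéjols characterization of one round of Lift-and-Project with a symmetrization via the Reynolds operator over the pointwise stabilizer $G_{J}$. The structural idea is: one round of $\LS_{0}$ lets us split $x$ along the coordinate $i$ into a convex combination of two points, one with $i$-th coordinate $1$ and one with $i$-th coordinate $0$; then, averaging over $G_{J}$ flattens these two points on $[n]\setminus J$ while preserving the value at $i$.

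First, I would invoke the inclusion $\LS_{0}(Q) \subseteq P_{i}(Q) = \conv(Q^{i,1} \cup Q^{i,0})$, valid for every polytope $Q\subseteq [0,1]^{n}$ and every coordinate $i\in[n]$. Since $x \in \LS_{0}^{k-j+1}(P) = \LS_{0}(\LS_{0}^{k-j}(P))$, this yields a decomposition $x = \lambda u + (1-\lambda) v$ with $u,v \in \LS_{0}^{k-j}(P)$, $u_{i} = 1$, $v_{i} = 0$, and $\lambda \in [0,1]$. The hypothesis $x_{i} \in (0,1)$ forces $\lambda \in (0,1)$, so $u$ and $v$ are genuine witnesses in $\LS_{0}^{k-j}(P)$ with the required boundary values at coordinate $i$.

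Second, I would symmetrize $u$ and $v$ over $[n]\setminus J$ with respect to the pointwise stabilizer $G_{J}$. By Proposition~\ref{proposition:symmetry_preserving} the polytope $\LS_{0}^{k-j}(P)$ is $G$-invariant, hence $G_{J}$-invariant since $G_{J}\leq G$. Since $G$ is $(k+1)$-transitive and $|J| = j \leq k$, in particular $G$ is $(j+1)$-transitive, so Proposition~\ref{proposition:fixG} (applied with parameter $j$ in place of $k$ and $S = J$) yields that the points
\[
x^{i,1} := \beta_{G_{J}}(u), \qquad x^{i,0} := \beta_{G_{J}}(v)
\]
belong to $\LS_{0}^{k-j}(P)$, agree with $u$ and $v$ respectively on all coordinates indexed by $J$, and are constant on $[n]\setminus J$. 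Since $i \in J$, this gives $x^{i,1}_{i} = u_{i} = 1$ and $x^{i,0}_{i} = v_{i} = 0$ (condition~(a)), and the constancy on $[n]\setminus J$ is precisely condition~(b).

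I do not anticipate a serious obstacle: the argument is essentially a two-step reduction. The only point requiring care is bookkeeping with the transitivity parameter -- one must verify that $G_{J}$ is sufficiently rich so that the hypotheses of Proposition~\ref{proposition:fixG} are satisfied when the ambient polytope is $\LS_{0}^{k-j}(P)$ rather than $P$. This is handled by combining Proposition~\ref{proposition:symmetry_preserving} (which transports $G$-invariance from $P$ to $\LS_{0}^{k-j}(P)$) with the fact that $(k+1)$-transitivity implies $(j+1)$-transitivity whenever $j\leq k$.
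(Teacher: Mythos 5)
Your proposal is correct and follows essentially the same route as the paper: split $x \in \LS_0^{k-j+1}(P) = \LS_0(\LS_0^{k-j}(P))$ along coordinate $i$ to obtain points in $\LS_0^{k-j}(P)$ with $i$-th coordinate $0$ and $1$ respectively, then flatten them on $[n]\setminus J$ by averaging over $G_J$, using Proposition~\ref{proposition:symmetry_preserving} to transport $G$-invariance to $\LS_0^{k-j}(P)$ and Proposition~\ref{proposition:fixG} to describe the averaged points. The only cosmetic difference is that you inline the decomposition argument via the Balas--C\'eria--Cornu\'ejols characterization $\LS_0(Q) \subseteq P_i(Q) = \conv(Q^{i,1} \cup Q^{i,0})$, whereas the paper delegates this to the ``moreover'' clause of an auxiliary Lemma~\ref{lemma:D_implies_A_1}; the content is identical.
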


 Let $x \in \LS_{0}^{k}(P)$ and consider $\overline x:=\beta_G(x)$. Since $P$ is integer-empty, we deduce that $\overline x$ is fractional in all components. By Proposition~\ref{proposition:symmetry_preserving}, $\LS_0^k(P)$ is $G$-invariant, hence $\overline x \in \LS_{0}^k(P)$.  Pick $i=1$ and apply Lemma~\ref{lemma:D_implies_A_2} as to obtain points $x^{1,1}$ and $x^{1,0} \in \LS_0^{k-1}(P)$. Let $G_1$ be the subgroup of $G$ that fixes the first coordinate. Using again the fact that $P$ is integer-empty, we deduce that all coordinates of $\overline x^{1,1}:=\beta_{G_1}(x^{1,,1})$ are fractional, except the first that is equal to $1$. Similarly, all coordinates of $\overline x^{1,0}:=\beta_{G_1}(x^{1,0})$ are fractional, except the first that is equal to $0$. Since $\LS^{k-1}(P)$ is $G$-invariant by Proposition~\ref{proposition:symmetry_preserving}, we have $\overline x^{1,0}, \overline x^{1,1} \in \LS^{k-1}(P)$. Iterating, for all $j \in [k]$ and vectors $q^j \in \{0,1\}^j$ we obtain points $\overline x^{q^2}, \overline x^{q^3}\dots, \overline x^{q^k}$ such that 
 \begin{itemize}[label=$\bullet$]
     \item $\overline x^{q^j} \in \LS_0^{k-j}(P)$;
     \item $\overline x^{q^j}_\ell = q^j_\ell$ for all $\ell \in [j]$.     
 \end{itemize}
 In particular, for all binary $k$-tuples $q$, we obtain a point $\overline x^{q} \in \LS_0^{0}(P)=P$ with $\overline x^{q}_\ell =q_\ell$ for all $\ell \in [k]$.
 Now fix a $(n-k)$-dimensional face $F$ of $[0,1]^n$, defined as:
 $$
 F=\{ x \in [0,1]^n : \, x_i = 1 \, \forall i \in S_1; \, x_i = 0 \, \forall i \in S_0\}.  
 $$
 Let $q \in \{0,1\}^k$ such that $q_1=q_2=\dots=q_{|S_1|}=1$ and $q_{|S_1|+1}=q_{|S_1|+2}=\dots=q_{k}=0$. Since $G$ is $(k+1)$-transitive, there exists $\pi \in G$ that maps $\overline x^q$ to a point of $\pi \overline x^q \in F$. In particular, $\pi \overline x^q \in F \cap P$, as required.

\section{Conclusion}\label{section:conclusion}

In this paper, we investigate bounds on the number of iterations that LP-based hierarchies need to certify integer-emptiness of a polytope $P$ contained in the binary cube.  Roughly speaking, the ``more symmetric'' $P$ is, the larger is the number of rounds for which the hierarchies ``collapse'' -- that is, they can either all certify integer-emptiness, or none can. An important feature of our work is that we do not assume any particular structure of the polytopes, beyond the assumption of symmetry. We illustrate how certain results from the literature and new insights can be deduced from our main result. It is a natural question to investigate the behavior under various (partial) symmetries of LP hierarchies and other automatic approaches, such as the SoS (Lasserre) hierarchy, as well as the Split and Chv\`atal-Gomory closures. Finally, while our results suggest that some of the current methods for tightening LP relaxations behave poorly for symmetric instances, it is a relevant task to develop new techniques that are enhanced--or at least not negatively affected--by symmetry. 

\bigskip

\noindent {\bf Acknowledgments.} Yuri Faenza and Matias Villagra acknowledge support from the AFOSR grant FA9550-23-1-0697. José Verschae thanks the support of ANID FONDECYT Regular Nr. 1221460 and the Center for Mathematical Modeling (CMM) Basal fund FB210005.
Victor Verdugo thanks the support of ANID FONDECYT Regular Nr. 1241846.

\bibliographystyle{splncs04}
\bibliography{citations}

\appendix

\section{On the hypotheses of Theorem~\ref{main_theorem}}\label{appendix:hypothesis-thm}

We illustrate here how Theorem~\ref{main_theorem} is tight in several ways. 
First, we claim that we cannot weaken the $(k+1)$-transitivity assumption to $k$-transitivity. Consider the polytope $P := \conv\left(\{ \pi (1,1/2,0), \pi (1,1,1/10) \, : \, \pi \in C_{3}\} \right)$, where $C_{3}$ is the cyclic group acting on the set $[3]$. This polytope is $1$-transitive but not $2$-transitive. A routine computation shows that $\SA(P) = \emptyset$ while using the definition of $\LS_0$, one can check that $\LS_0(P)\neq \emptyset$, see Fig.~\ref{fig:orb}. Thus,~\ref{mainthm-b} does not hold, while~\ref{mainthm-d} holds.

\begin{figure}[H]
\begin{center}
\includegraphics[width=0.3\textwidth]{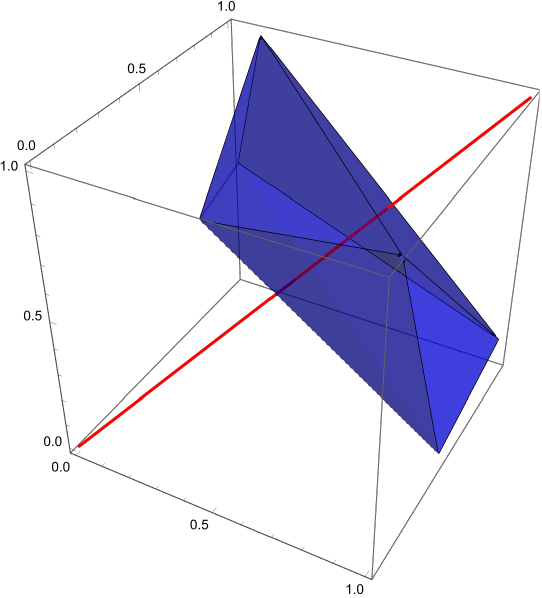}
\hspace{2cm}
\includegraphics[width=0.3\textwidth]{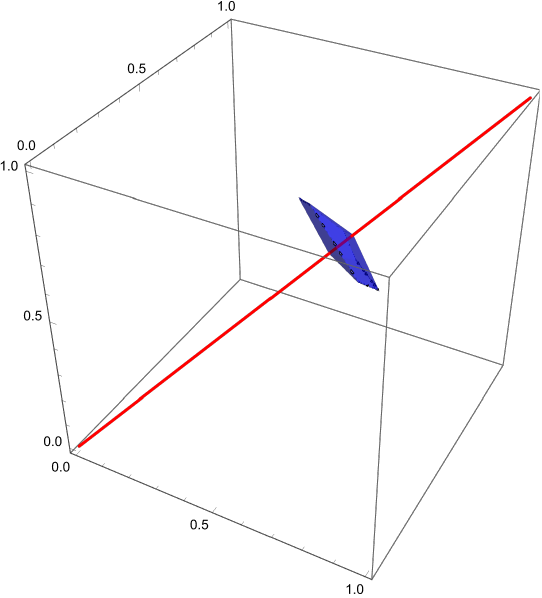}
\caption{On the left, the polytope $P=\conv\left(\{ \pi (1,1/2,0), \pi (1,1,1/10) \, : \, \pi \in C_{3}\} \right)$ from Appendix~\ref{appendix:hypothesis-thm}. On the right, $\LS_{0}(P)$.}\label{fig:orb}
\end{center}
\end{figure}

Next, observe that we cannot drop the integer-emptyness assumption, since trivially~\ref{mainthm-b},~\ref{mainthm-c},~\ref{mainthm-d} hold for any polytope with an integer point, irrespectively of whether~\ref{mainthm-a} holds. 

Last, recall that we already argued that $\SA^1(P)$ and $\LS_0^{1}(P)$ may differ (while both being non-empty), even if the hypothesis of the theorems hold for $k=1$; see Example~\ref{ex:3D}.

\section{Examples and applications}

\subsection{Steiner Triple Covering Problems}\label{app:example:steiner}

We define and discuss here the instances STS$_n$ of the Steiner Triple Cover Problem. STS$_1$ is defined as the following set covering instance
$$\min \{\ones^T x \, : \, x_1 + x_2 + x_3 \geq 1, \, x \in \{0,1\}^3\}. $$
For $n \in \N_{\geq 2}$, we define STS$_n$ as a function of STS$_{n-1}$ as follows. We let $A_{n-1}$ be the constraint matrix for STS$_{n-1}$. We let $I_{n-1}$ be the identity matrix with $3^{n-1}$ rows and columns. We let $B_{n}$ be the matrix with $3^{n}$ columns and whose rows are all and only the vectors that satisfy the following: (a) three entries $i_0,i_1,i_2$ are equal to $1$, and the remaining entries are equal to $0$; (b) $3^j+1\leq i_j\leq 3^{j+1}$ for $j =0,1,2$; (c) $i_j \mod 3^n$ assume three different values for $j \in \{0,1,2\}$. STS$_n$ is then defined as the following set covering problem (for the sake of readability, we omit the $0$s):
\[
\begin{array}{lrl}
\displaystyle \min \;\ones^T x \\ 
\text{s.t.} &
\begin{pmatrix}
A_{n-1}  &  \\
 & A_{n-1} &  \\
 &  & A_{n-1} \\
I_{n-1} &  &  \\
 & I_{n-1} &  \\
 &  & I_{n-1} \\
\multicolumn{3}{c}{B_{n}} \\
\end{pmatrix} x &  \geq \ones \\ [1em]
& x & \in \{0,1\}^{3^n}.
\end{array}
\]

\begin{example} STS$_2$ is given by the following binary integer program.
\[
\begin{array}{rrrrrrrrrrl}
\min & x_1 & + x_2 & + x_3 & + x_4 & + x_5 & + x_6 & + x_7 & + x_8 & + x_9 \\ [1em]
& {x_1} & + {x_2} & + {x_3} & & & & & & & \geq 1 \\ [.5em]
&  & & & {x_4} & + {x_5} & + {x_6} & &&  & \geq 1 \\ [.5em]
&  & & &&&&  {x_7} & + {x_8} & + {x_9} & \geq 1 \\ [.5em]
& {x_1} & & &  + {x_4} & & & + {x_7} & & & \geq 1  \\ [.5em]
& & {x_2} & & & + {x_5} & & & + {x_8} & & \geq 1 \\ [.5em]
& & & {x_3} & & & + {x_6} & & & + {x_9} & \geq 1 \\ [.5em]

& {x_1} & & & & + {x_5} & & & & + {x_9}  & \geq 1 \\ [.5em]
& {x_1} & & & & & + {x_6} & & + {x_8} & & \geq 1 \\ [.5em]
& & {x_2} & & + {x_4} & & & & & + {x_9} & \geq 1 \\ [.5em]
& & {x_2} & & & & + {x_6} & + {x_7} & & & \geq 1 \\ [.5em]
& & & {x_3} & + {x_4} & & & & + {x_8}  & & \geq 1 \\ [.5em]
& & & {x_3} & & + {x_5} & & + {x_7} & & & \geq 1 \\ [.5em]
& & & & & & & & & x & \in \{0,1\}^9.
\end{array}
\]
An optimal solution to this instance is, e.g., $x^*=(1,1,0,1,0,1,0,1,0)$ with objective value $5$.
\end{example}
Now fix $n \in \N_{\geq 2}$. By construction, $x_1+x_2+x_3 \geq 1$ is a valid constraint of STS$_n$. It is not hard to see that there exists a feasible solution $\overline x$  for STS$_n$ that sets at least $3$ variables to $0$. If the feasible region of the Linear Programming relaxation of STS$_n$ were $3-$transitive, then there would exist a feasible solution $\hat x$ such that $\hat x_1=\hat x_2 = \hat x_3=0$, violating $x_1+x_2+x_3\geq 1$.

We now show how to obtain a $2$-transitive integer-empty optimal LP face for STS$_2$ that satisfies (A) from Theorem~\ref{main_theorem}. A similar argument applies for larger values of $n$. Add the cut $\sum_{i=1}^n x_i \geq 4$, whose intersection with the Linear Programming relaxation of STS$_2$ gives the integer-empty, optimal LP face $F$. Following~\cite{hall1960automorphisms}, it is easy to see that $F$ is $2$-transitive. Thus, we are left to show that $F$ intersects all $(n-1)$-dimensional faces of the hypercube. By symmetry, we only need to consider $F_i= \{ x \in [0,1]^9 : x_1 = i\}$ for $i=0,1$. We then conclude the argument by observing that
$$
\left(0,1,0,\frac{1}{2},\frac{1}{2},\frac{1}{2},\frac{1}{2},\frac{1}{2},\frac{1}{2}\right) \in F \cap F_0
$$
and 
$$
\left(1,0,0,\frac{1}{2},\frac{1}{2},\frac{1}{2},\frac{1}{2},\frac{1}{2},\frac{1}{2}\right) \in F \cap F_1.
$$
\subsection{Knapsack-Cover Inequalities}\label{app:KCover}
In this section, we show an application of our main techniques. The knapsack cover problem consists of $n$ items, each with a covering capacity $u_i\ge 0$ and a cost $c_i\ge 0$. Our aim is to select a subset of items of minimum cost whose total covering capacity is at least a given demand $D\ge 0$, i.e., 
\[\min \left\lbrace \sum_{i=1}^n c_ix_i: \sum_{i=1}^n x_iu_i\ge D, x\in \{0,1\}^n\right\rbrace.\] The straightforward linear relaxation, where we replace the constraint $x\in \{0,1\}^n$ by $x\in [0,1]^n$, has an unbounded integrality gap. The knapsack-cover inequalities, introduced by Wolsey~\cite{wolsey_faces_1975} and later popularized by Carr et al.~\cite{carr_strengthening_2000}, correspond to a set of valid inequalities that reduced the integrality gap to 2, and are given by

\[ \sum_{i\not\in S} x_i\bar{u}_i(S)\ge D(S) \qquad \text{for all } S\subseteq [n],
\]
where $D(S) := \max \{D-\sum_{i\in S}u_i,0\}$ and $\bar{u}_i(S):= \min\{D(S),u_i\}$. 

An important open problem, particularly relevant for approximation algorithms, is how to obtain relaxations with smaller integrality gaps. Bienstock and Zuckerberg~\cite{daniel_bienstock_approximate_2003,bienstock_subset_2004} showed that even after a sublinear number of rounds $k=o(n)$ of the Sherali-Adams hierarchy (as well as for the semidefinite strengthening $\LS_+$ of the Lovász-Schrijver hierarchy), the integrality gap still remains 2. Later, Kurpisz et al.~\cite{kurpisz_sum--squares_2020} proved that even after $\Omega(\log(n)^{1-\varepsilon})$ (for any $\varepsilon>0$) rounds of the stronger Sum-of-Squares (Lasserre) SDP hierarchy, the integrality gap remains 2. 

In what follows we show how to apply our main construction from Section~\ref{section:AimpliesB} yields the same result as Bienstock and Zuckerberg~\cite{daniel_bienstock_approximate_2003,bienstock_subset_2004} for the Sherali-Adams relaxation. Unlike their line of thought, that needs to propose the full vector $y\in \M^k(P)$ from scratch, we just need to give vectors that belong to $P$ intersected to each of the $(n-k)$-dimensional faces of the hypercube; a significantly simpler task. 

Consider the instance of knapsack cover where $u_i=1$ and $c_i=1$ for all $i\in [n]$, and $D=1+\frac{1}{n-1}$. The linear relaxation with knapsack-cover inequalities\footnote{As Kurpisz et al.~\cite{kurpisz_sum--squares_2020} observed, the inequality $\sum_{i=1}^n x_i\ge 1+\frac{1}{n-1}$ is redundant in this formulation and thus it can be omitted, which yields the polytope studied by Bienstock and Zuckerberg~\cite{daniel_bienstock_approximate_2003,bienstock_subset_2004}.} for this instance is
\[\min\left\lbrace\sum_{i=1}^n x_i: \sum_{i=1}^n x_i\ge 1+\frac{1}{n-1}, \sum_{j\in [n]\setminus\{i\}}x_j\ge 1 \text{ and }0\le x_i\le 1\text{ for all }i\in [n]\right\rbrace.\]

Let $P$ denote the feasible region of this linear program. We will show $\SA^k(P)$ contains a solution $x$ with $\sum_{i=1}^nx_i = 1+O(\frac{k}{n})$. As the optimal integral solution of this LP has value 2,
this shows that if $k=k(n)$ grows sublinearly on $n$ (for example, $k=n^{1-\delta}$ for some $\delta>0$) then $\sum_{i=1}^nx_i$ converges to 1, hence implying that the integrality gap remains 2. 

\begin{proposition}
For any $k\le n/2$,  there exists a vector $x\in \SA^k(P)$ with value $\sum_{i=1}^nx_i = 1+O(\frac{k}{n})$. 
\end{proposition}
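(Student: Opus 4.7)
The plan is to emulate the construction from the proof of \ref{mainthm-a}$\Rightarrow$\ref{mainthm-b} in Theorem~\ref{main_theorem} with a carefully chosen $\Delta \in [0,1]^{k+1}$. Since $P$ is $\Sym_n$-invariant and thus $(k+1)$-transitive for every $k<n$, the goal is to construct a symmetric point $\overline{y} \in \M^k(P)$ of the form $\overline{y}_I = \overline{\gamma}_{|I|}$; its projection onto the original variables then yields $x \in \SA^k(P)$ with $\sum_{i=1}^n x_i = n\,\overline{\gamma}_1$, so it suffices to show $n\,\overline{\gamma}_1 = 1 + O(k/n)$.

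First I would verify the face-intersection data. For each $\ell \in \{0,\ldots,k\}$, the symmetric point $\overline{x}^{\ell}$ of the form~\eqref{eq:xell} belongs to $P$ if and only if $\Delta_\ell$ satisfies the knapsack-cover constraints restricted to that face. A routine case analysis, using $u_i = c_i = 1$ and $D = 1 + 1/(n-1)$, shows that the smallest admissible values are
\[\Delta_0 = \frac{1}{n-k-1}, \qquad \Delta_1 = \frac{1}{n-k}, \qquad \Delta_\ell = 0 \text{ for } \ell \geq 2,\]
because once two coordinates are fixed to $1$ every covering constraint already holds without any contribution from the remaining variables.

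I would then instantiate the system~\eqref{system-in-gamma} of Lemma~\ref{cl:system-in-gamma} with this $\Delta$. Using Pascal's rule, the interaction of equation~\eqref{eq:z_in_M0-new} with the Möbius relation~\eqref{eq:ugly-inequality} reduces to the two-term recursion $(1-\Delta_{\ell+1})\lambda_{\ell+1} = \Delta_\ell\,\lambda_\ell$; with our $\Delta$, this forces $\lambda_\ell = 0$ for $\ell \geq 3$ and gives $\lambda_1 = \frac{n-k}{(n-k-1)^2}\lambda_0$ and $\lambda_2 = \frac{1}{(n-k-1)^2}\lambda_0$. The normalization~\eqref{eq:convex_combination_emptyset-new} then pins down
\[\lambda_0 = \frac{(n-k-1)^2}{(n-k-1)^2 + k(n-k) + \binom{k}{2}},\]
while Möbius inversion $\gamma_r = \sum_{j\geq r}\binom{k-r}{j-r}\lambda_j$ recovers a nonnegative $\gamma$ satisfying~\eqref{eq:lambda-gamma-nonnegative}. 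Plugging these into~\eqref{eq:convex_combination_singletons:fractional-new} yields
\[n\,\overline{\gamma}_1 = n\lambda_0\Delta_0 + nk\lambda_1\Delta_1 = \frac{n(n-1)}{(n-k-1)^2 + k(n-k) + \binom{k}{2}},\]
which is $1 + O(k/n)$ whenever $k \leq n/2$.

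The main obstacle is that Lemma~\ref{cl:system-in-gamma} as stated asks for a \emph{strictly} positive solution to~\eqref{system-in-gamma}, whereas here $\Delta_\ell = 0$ for $\ell \geq 2$ and $\lambda_\ell = 0$ for $\ell \geq 3$. I plan to handle this either by re-examining the proof of the lemma to confirm that nonnegativity already suffices---the vanishing terms simply drop out of the convex combination~\eqref{prop:SA_characterization:convexcombination}---or by perturbing $\Delta_\ell$ to $\varepsilon > 0$ for $\ell \geq 2$, applying the lemma as stated to obtain $\overline{y}^{(\varepsilon)} \in \M^k(P)$, and passing to the limit $\varepsilon \to 0^+$ using closedness of $\M^k(P)$. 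Either route produces the desired $\overline{y} \in \M^k(P)$ with the claimed objective value.
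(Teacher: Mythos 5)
Your proposal follows essentially the same route as the paper: you select the same face-intersection data $\Delta_0=\tfrac1{n-k-1}$, $\Delta_1=\tfrac1{n-k}$, $\Delta_\ell=0$ for $\ell\geq 2$, obtain the same $\lambda$'s and the same final expression $n\overline\gamma_1 = \frac{n(n-1)}{(n-k-1)^2+k(n-k)+\binom{k}{2}}$, which equals the paper's $\frac{n(n-1)}{D(n-k-1)(n-k)}$ after multiplying out $D=\omega_\Delta^k$. The only cosmetic difference is that you extract $\overline\gamma_1$ via~\eqref{eq:convex_combination_singletons:fractional-new} while the paper uses~\eqref{eq:convex_combination_singletons:integer-new}; both are constraints of the same system, so they give the same answer, and your two-term recursion $(1-\Delta_{\ell+1})\lambda_{\ell+1}=\Delta_\ell\lambda_\ell$ is exactly the ratio implied by the paper's closed form~\eqref{linearsystem:lambda_solution}.

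The substantive contribution of your writeup is the observation about strict positivity, and you are right to flag it: with $\Delta_\ell=0$ for $\ell\geq 2$ one gets $\lambda_\ell=0$ for $\ell\geq 3$, so the solution of system~\eqref{system-in-gamma} is not strictly positive and Lemma~\ref{cl:system-in-gamma} does not apply verbatim. The paper's proof invokes the construction of Section~\ref{section:AimpliesB} without acknowledging this corner case. Of your two proposed fixes, the perturbation-and-limit argument is the cleaner one to carry out: replace $\Delta_\ell$ by $\varepsilon\in(0,1)$ for $\ell\geq 2$ (the perturbed $\overline x^\ell$ still lies in $P$, since $\ell+(n-k)\varepsilon \geq 1+\tfrac1{n-1}$ and every $\sum_{j\neq i}x_j\geq \ell-1\geq 1$), apply Lemma~\ref{cl:system-in-gamma} as stated to obtain $\overline y^{(\varepsilon)}\in\M^k(P)$, and let $\varepsilon\to 0^+$, using the fact that $\M^k(P)$ is a polytope and hence closed, and that the formulas~\eqref{linearsystem:lambda_solution} and~\eqref{linearsystem:gamma_solution} are continuous in $\Delta$. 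Your alternative route (``nonnegativity already suffices'') would require reopening Proposition~\ref{prop:SA_characterization} to confirm that the normalization $z^{J_0,J_1}(\overline y)\in\M^0(P)$ can be waived when $\overline y^{J_0,J_1}_\emptyset=0$; this is true in spirit but is more delicate than you suggest, because the definition~\eqref{eq:z} fixes $z^{J_0,J_1}=\overline y^{J_0,J_1}$ in that degenerate case, and that vector has $z_\emptyset=0$ and hence does not belong to $\M^0(P)$ as defined. So the limit argument is the route to prefer.
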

\begin{proof}
Consider any number $k\le n/2$. We obtain an explicit vector in $\SA^k(P)$ by using the construction from Section~\ref{section:AimpliesB}. We follow the notation from that section. Observe first that the polytope $P$ is $\mathcal{S}_n$-invariant. Hence $P$ is $n$-transitive, and therefore $(k+1)$-transitive. Observe that the following vectors belong to $P$:
\begin{align*}
\overline{x}^{0}&:= \left(\underbrace{0,\ldots,0}_{k},\underbrace{\frac{1}{n-k-1},\ldots,\frac{1}{n-k-1}}_{n-k}\right),\\
\overline{x}^{1}&:= \left(\underbrace{1,0,\ldots,0}_{k},\underbrace{\frac{1}{n-k},\ldots,\frac{1}{n-k}}_{n-k}\right),\\
\overline{x}^{2}&:= (\underbrace{1,1,0,\ldots,0}_{k},\underbrace{0,\ldots,0}_{n-k}),\\
\vdots\\
\overline{x}^{k}&:= (\underbrace{1,1,1,\ldots,1}_{k},\underbrace{0,\ldots,0}_{n-k}).
\end{align*}
Hence $\Delta_0=\frac{1}{n-k-1}$, $\Delta_1=\frac{1}{n-k}$ and $\Delta_{\ell}=0$ for all $\ell=2,\ldots,k$.  We use \eqref{linearsystem:lambda_solution} in order to compute the values $(\overline{\lambda}_\ell)_{\ell=0}^k$. Let us denote $D:=\omega^k_\Delta$ to avoid excessive notation. Then
\begin{align*}
\overline{\lambda}_0 & = \frac{1}D \prod_{i=1}^{k}(1-\Delta_i) = \frac{1}{D}\left(1-\frac{1}{n-k}\right),\\
\overline{\lambda}_1 & = \frac{1}D \Delta_0 \prod_{i=2}^{k}(1-\Delta_i) = \frac{1}{D} \left( \frac{1}{n-k-1} \right),\\
\overline{\lambda}_2 & = \frac{1}D \Delta_0\Delta_1 \prod_{i=3}^{k}(1-\Delta_i) = \frac{1}{D} \left( \frac{1}{n-k-1} \right) \left( \frac{1}{n-k} \right),
\end{align*}
and $\overline{\lambda_\ell}=0$ for all $\ell\ge 3.$ We can easily compute $D$ by using \eqref{eq:convex_combination_emptyset-new} and solving for $D$, which yields

\begin{align*}D &=  D\overline{\lambda}_0+Dk\overline{\lambda}_1+D\frac{k(k-1)}{2} \overline{\lambda}_2 \\
&= \left(1-\frac{1}{n-k}\right) + k \left( \frac{1}{n-k-1} \right)  + \left( \frac{k(k-1)}{2} \right) \left( \frac{1}{n-k-1} \right) \left( \frac{1}{n-k} \right)\\
&= 1  -\frac{1}{n-k} + \frac{k}{n-k-1}  + \frac{1}{2} \left(\frac{k}{n-k-1} \right) \left( \frac{k-1}{n-k} \right) \\
&= \frac{2 (n-k-1)(n-k) - 2 (n-k-1) + 2k (n-k) + k(k-1)}{2 (n-k-1)(n-k)} \\
&= \frac{2n^{2} - 2nk - 4n + k^{2} + 3k + 2}{2 (n-k-1)(n-k)}.
\end{align*}
We obtain that the vector $x := \overline{\gamma}_{1} \ones$ belongs to $\SA^{k}(P)$, where $\overline{\gamma}_{1}$ can be computed using~~\eqref{eq:convex_combination_singletons:integer-new}
\begin{align*}
    \overline{\gamma}_{1} &= \sum_{\ell=1}^{2} \binom{k-1}{\ell-1} \overline{\lambda}_{\ell} \nonumber\\
    &= \frac{1}{D} \left(\frac{1}{n-k-1} \right) + \frac{1}{D} \left( \frac{1}{n-k-1} \right) \left( \frac{1}{n-k} \right)(k-1) \nonumber\\
    &= \frac{n-1}{D (n-k-1)(n-k)}.
\end{align*}
Therefore, the objective value of $x$ is
\begin{align*}
    \sum_{i=1}^{n} x_{i} &= \frac{n(n-1)}{D (n-k-1)(n-k)} \\
    &= \frac{2 n (n-1)}{2n^{2} - 2nk - 4n + k^{2} + 3k + 2} \\
    &= \frac{1 - (1/n)}{1 + (1/2)(k/n)^{2} - (k/n) + (3/2)(k/n^{2}) - (2/n) + (1/n^{2})} \\
    &= \frac{1}{1-\Theta(k/n)} = 1 + \Theta\left( k / n\right).
\end{align*}
\end{proof} 

\section{Additional facts and proofs from Section~\ref{section:preliminaries}}

\subsection{A lemma}

The following lemma is useful in our analysis.

\begin{lemma}\label{lem:cancels-out}
Let $d \in \mathbb{Z}_{\geq 0}$ and $y \in [0,1]^{\binom{n}{\leq d}}$. For every $I \subseteq [n]$ with $|I| \leq 1$, $S \subseteq [n]$ of cardinality $d$, every $2$-partition $J_{0} \sqcup J_{1} = S$, and every $j \in J_0$, we have 
$$\sum_{H \subseteq J_0}(-1)^{|H|}y_{J_1\cup \{j\} \cup H}=0.$$
\end{lemma}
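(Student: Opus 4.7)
The plan is to exploit a sign-reversing involution on the power set $2^{J_0}$. Specifically, I would define $\phi : 2^{J_0} \to 2^{J_0}$ by $\phi(H) := H \triangle \{j\}$; since $j \in J_0$, this is a well-defined, fixed-point-free involution. The key observation is that adjoining $\{j\}$ washes away any distinction between $H$ and $\phi(H)$: because $j$ already belongs to $J_1 \cup \{j\} \cup H$ whether or not $j \in H$, we have
$$J_1 \cup \{j\} \cup H \;=\; J_1 \cup \{j\} \cup \phi(H) \qquad \text{for every } H \subseteq J_0,$$
so the corresponding $y$-coordinates agree. On the other hand, $|\phi(H)| = |H| \pm 1$, so $(-1)^{|\phi(H)|} = -(-1)^{|H|}$.

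Next, I would partition $2^{J_0}$ into the pairs $\{H, \phi(H)\}$, a valid partition precisely because $\phi$ has no fixed points. For each pair, the total contribution to the sum is
$$\bigl[(-1)^{|H|} + (-1)^{|\phi(H)|}\bigr]\, y_{J_1 \cup \{j\} \cup H} \;=\; 0,$$
and summing over all pairs yields the claimed identity. I do not expect any real obstacle: the argument is a purely combinatorial cancellation that uses neither the actual values of $y$ beyond its well-definedness, nor the parameters $d$ and $I$ appearing in the hypothesis (the statement would hold verbatim with an additional fixed index set $I$ inside the subscripts, as would arise if one wanted to apply this to the quantities $y_I^{J_0,J_1}$ of \eqref{eq:z}).
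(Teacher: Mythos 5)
Your proof is correct and is essentially the same as the paper's: the paper splits the sum according to whether $j \in H$ and reindexes $H \mapsto H \setminus \{j\}$ in the first part, which is exactly the fixed-point-free pairing induced by your involution $\phi(H) = H \mathbin{\triangle} \{j\}$. Your side observation that the index set $I$ in the hypothesis plays no role (and that the identity would persist with $I$ adjoined inside the subscripts) is also accurate.
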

\begin{proof}
We have 
\begin{align*}
 &\sum_{H \subseteq J_0}(-1)^{|H|}y_{J_1\cup \{j\} \cup H }\\
 & =  \sum_{H \subseteq J_0 : j \in H}(-1)^{|H|}y_{J_1\cup \{j\} \cup H }+ \sum_{H \subseteq J_0 : j \notin H}(-1)^{|H|}y_{J_1\cup \{j\} \cup H } \\
 & = \sum_{H \subseteq J_0 : j \notin H}(-1)^{|H|+1}y_{J_1\cup \{j\} \cup H }+ \sum_{H \subseteq J_0 : j \notin H}(-1)^{|H|}y_{J_1\cup \{j\} \cup H }=0.\qedhere
\end{align*}    
\end{proof}

\subsection{Proof of Proposition~\ref{proposition:fixG}}\label{appendix:proof_fixG}
The statement is obvious for $i \in S$. We next claim $\beta_{G_S}(x)_j=\beta_{G_S}(x)_i$ for any $i,j \in [n]\setminus S$. Thus, let $i,j \in [n]\setminus S$. Since $G$ is $(k+1)$-transitive, there exists $h \in G$ such that $h(\ell)=\ell$ for $\ell \in S$ and $h(j)=i$. By construction, $\pi \in G_S$. Thus, using the well-known fact that $G_S$ is a subgroup of $G$, we can  write 
\begin{align*}
        \beta_{G_S}(x)_{j} & = \frac{1}{|G_S|} \sum_{\pi \in G_S} (\pi x)_j\\
        &= \frac{1}{|G_S|} \sum_{\pi \in G_S} x_{\pi^{-1}(j)} \\ 
        &= \frac{1}{|G_S|} \sum_{\pi \in G_S} x_{\pi^{-1}h^{-1}(i)}\\
        &= \frac{1}{|G_S|} \sum_{\pi \in G_S} x_{(h\pi)^{-1}(i)} = \frac{1}{|G_S|} \sum_{r \in G_S} x_{r^{-1}(i)}  = \frac{1}{|G_S|} \sum_{r \in G_S} x_{r(i)}= \beta_{G_S}(x)_{i}.   \end{align*}
     Therefore, for $ i \in [n]\setminus S$, we have $$\sum_{j \in [n]\setminus S}x_j + \sum_{j \in S} x_j=\|x\|_1=\|\beta_G(x)\|_1=\sum_{j \in [n]} \beta_G(x)_j= (n-|S|) \cdot \beta_G(x)_i + \sum_{j \in S} x_j,$$
from which the statement follows. \hfill $\qed$

\subsection{Properties of Symmetries of Polytopes}
The results of this subsection will be used for the proof of Proposition~\ref{proposition:symmetry-is-inherited}. 
We will use the following characterization of the symmetries of a polytope $P$.
\begin{lemma}\label{lm:CharacterizationG(P)}
Let $P$ be a polytope. A permutation $\pi\in \mathcal{S}_n$ belongs to $G(P)$ if and only if for any valid inequality $\alpha^\top x \ge \beta$ of $P$ we have that $(\pi \alpha)^{\top} x \ge \beta$ is also valid for $P$.
\end{lemma}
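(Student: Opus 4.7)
The plan is to reduce everything to the bookkeeping identity $(\pi\alpha)^\top x = \alpha^\top(\pi^{-1}x)$ for all $\pi\in\mathcal{S}_n$ and $\alpha,x\in\R^n$. I would first verify this by a direct re-indexing: using $(\pi\alpha)_i=\alpha_{\pi^{-1}(i)}$ and substituting $j=\pi^{-1}(i)$ in the sum $\sum_i\alpha_{\pi^{-1}(i)}x_i$ yields $\sum_j\alpha_j x_{\pi(j)}=\alpha^\top(\pi^{-1}x)$. Equivalently, $\alpha^\top(\pi x)=(\pi^{-1}\alpha)^\top x$. This identity is the bridge between letting $\pi$ act on the vector variable of an inequality and letting it act on the coefficient vector.

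For the forward direction, I would take $\pi\in G(P)$. Lemma~\ref{lem:group-P} says $G(P)$ is a group, so $\pi^{-1}\in G(P)$, and hence $\pi^{-1}x\in P$ for every $x\in P$. Given any valid inequality $\alpha^\top x\ge\beta$ of $P$, evaluating it at $\pi^{-1}x\in P$ yields $\alpha^\top(\pi^{-1}x)\ge\beta$, i.e., $(\pi\alpha)^\top x\ge\beta$; since this holds for every $x\in P$, the inequality $(\pi\alpha)^\top x\ge\beta$ is valid for $P$.

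The backward direction is the heart of the argument. Assuming the validity condition, I fix a defining system $\alpha_i^\top x\ge\beta_i$ ($i\in[m]$) of $P$ and aim to show $\alpha_i^\top(\pi x)\ge\beta_i$ for every $x\in P$, i.e., to show that $(\pi^{-1}\alpha_i)^\top x\ge\beta_i$ is valid. The obstacle is that the hypothesis directly yields the validity of $(\pi\alpha_i)^\top x\ge\beta_i$, not of $(\pi^{-1}\alpha_i)^\top x\ge\beta_i$. I would overcome this by iterating the hypothesis: since $(\pi\alpha_i)^\top x\ge\beta_i$ is itself a valid inequality, the hypothesis applied again yields that $(\pi^2\alpha_i)^\top x\ge\beta_i$ is valid, and so on for every $\pi^k\alpha_i$. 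Because $\pi$ has finite order $d$ as a permutation of $[n]$, we have $\pi^{-1}=\pi^{d-1}$, so $(\pi^{-1}\alpha_i)^\top x\ge\beta_i$ is valid, as desired. Combining this with the identity above, every defining inequality of $P$ is satisfied at $\pi x$ whenever $x\in P$, so $\pi x\in P$ and therefore $\pi\in G(P)$.
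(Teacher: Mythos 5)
Your proof is correct, and both directions hinge on exactly the same re-indexing identity $(\pi\alpha)^\top x = \alpha^\top(\pi^{-1}x)$ that drives the paper's argument; the forward direction is essentially identical to the paper's.

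The one place where you diverge is in closing the $\pi$-versus-$\pi^{-1}$ mismatch in the backward direction. The paper observes that the hypothesis applied to every defining inequality $a_i^\top x\ge b_i$ gives $a_i^\top(\pi^{-1}x)\ge b_i$ for all $x\in P$, i.e., $\pi^{-1}x\in P$, so $\pi^{-1}\in G(P)$; it then invokes Lemma~\ref{lem:group-P} once more (closure of $G(P)$ under inverses) to conclude $\pi\in G(P)$. You instead iterate the hypothesis: since validity is preserved under one application, applying it $\mathrm{ord}(\pi)-1$ times shows that $(\pi^{-1}\alpha_i)^\top x\ge\beta_i=(\pi^{\mathrm{ord}(\pi)-1}\alpha_i)^\top x \ge \beta_i$ is valid, so $\alpha_i^\top(\pi x)\ge\beta_i$ for every $x\in P$ and you conclude $\pi x\in P$ directly. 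Both are sound and of comparable length: the paper's version reuses the already-established group structure of $G(P)$, while yours is marginally more self-contained in the backward direction, relying only on the fact that permutations have finite order.
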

\begin{proof}
Consider a permutation $\pi\in G(P)$ and a valid inequality $\alpha^\top x \ge \beta$ for $P$. We must show that  $(\pi \alpha)^{\top} x \ge \beta$ is also valid. Since $G(P)$ is a group, then $\pi^{-1}\in G(P)$. If $x\in P$ then $\pi^{-1} x\in P$, and thus $\alpha^\top \pi^{-1}x \ge \beta$. Observing that $\alpha^\top (\pi^{-1}x) = (\pi \alpha)^{\top} x$, then $(\pi \alpha)^{\top} x \ge \beta$ for all $x\in P$. Hence $(\pi \alpha)^{\top} x \ge \beta$ is a valid inequality for $P$. 

To show the other implication, consider a permutation $\pi$ such that for any valid inequality $\alpha^\top x \ge \beta$ for $P$, we obtain that $(\pi \alpha)^{\top} x \ge \beta$ is also valid. It suffices to show that for any $x\in P$ we have that $\pi x\in P$.
Consider any formulation of $P$ given by $P=\{x: Ax\ge b\}$, for some matrix $A\in \mathbb{R}^{m\times n}$ and $b\in\mathbb{R}^m$. Let $x\in P$. Then $a_i^\top x\ge b_i$ for all $i\in [m]$, where $a_i$ is the transpose of the $i$-th row of $A$. Then $(\pi a_i)^{\top} x \ge b_i$. Therefore $(\pi a_i)^{\top} x = a_i^{\top} (\pi^{-1}(x))\ge b_i$ for all $i\in [m]$, and thus $\pi^{-1}(x)\in P$. We conclude that $\pi^{-1}\in G(P)$. As $G(P)$ is a group, it is closed under taking inverses, and thus $\pi\in G(P)$.
\end{proof}

\begin{lemma}\label{lemma:SAaddIneq}
Let $Ax \geq b$ be a system of linear inequalities containing $0 \leq x \leq \ones$, and let $\alpha^{\top} x \geq \beta$ be an inequality implied by $Ax \geq b$. Then
\begin{equation*}
    \M^k(\{Ax \geq b\}) = \M^k(\{Ax \geq b, \alpha^{\top} x \geq \beta\}).
\end{equation*}
\end{lemma}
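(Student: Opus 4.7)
The plan is to prove the equality by establishing two inclusions. The inclusion $\M^k(\{Ax \geq b, \alpha^\top x \geq \beta\}) \subseteq \M^k(\{Ax \geq b\})$ is immediate: the right-hand side is defined by a subfamily of the multilinearized inequalities defining the left-hand side, so any vector $y$ satisfying the larger family also satisfies the subfamily.

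For the reverse inclusion, my approach is to invoke LP duality to write the implied inequality as a non-negative combination of the rows of the system plus a non-negative constant, and then observe that multilinearization is $\mathbb{R}$-linear and preserves this decomposition. If $\{x : Ax \geq b\}$ is empty then both lifts are empty and there is nothing to prove; otherwise Farkas' lemma yields multipliers $\mu \geq 0$ with $A^\top \mu = \alpha$ and slack $\delta := b^\top \mu - \beta \geq 0$, giving the polynomial identity
\begin{equation*}
\alpha^\top x - \beta = \sum_i \mu_i (a_i^\top x - b_i) + \delta.
\end{equation*}
Fix any $2$-partition $J_0 \sqcup J_1 = S$ with $|S| = k$ and let $L := \prod_{i \in J_1} x_i \prod_{j \in J_0}(1 - x_j)$. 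I would multiply the identity by $L$ and then apply the multilinearization map $\widehat{(\cdot)}$, which is $\mathbb{R}$-linear on polynomials modulo $x_i^2 = x_i$. For any $y \in \M^k(\{Ax \geq b\})$ this yields
\begin{equation*}
\widehat{L(\alpha^\top x - \beta)}(y) \;=\; \sum_i \mu_i\, \widehat{L(a_i^\top x - b_i)}(y) \;+\; \delta \,\hat L(y),
\end{equation*}
where each $\widehat{L(a_i^\top x - b_i)}(y) \geq 0$ by the defining constraints of $\M^k(\{Ax\geq b\})$ applied to row $i$ and partition $(J_0, J_1)$.

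The main obstacle I expect is to verify that $\hat L(y) = y_\emptyset^{J_0,J_1}$ is itself non-negative, since $\hat L \geq 0$ is not literally one of the $\M^k$ inequalities, which always multiply a row by a lifting factor indexed by a set of size exactly $k$. To bridge this gap, I would use the fact that $x_i \geq 0$ and $1 - x_j \geq 0$ appear explicitly in the system: for any $i \in J_1$, applying the $\M^k$ definition to the row $x_i \geq 0$ and the partition $(J_0, J_1)$ produces the multilinearization of $L \cdot x_i$, which equals $\hat L$ once $x_i^2 = x_i$ is enforced; hence $\hat L(y) \geq 0$. A symmetric argument using some $1 - x_j \geq 0$ with $j \in J_0$ handles the case $J_1 = \emptyset$, while the case $S = \emptyset$ (i.e., $k = 0$) is trivial with $\hat L = y_\emptyset = 1$. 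Combining these facts with the non-negativity of $\mu$ and $\delta$ gives $\widehat{L(\alpha^\top x - \beta)}(y) \geq 0$, which is the multilinearized inequality for the row $\alpha^\top x \geq \beta$ and partition $(J_0, J_1)$. Since this holds for every such partition, $y \in \M^k(\{Ax \geq b, \alpha^\top x \geq \beta\})$, completing the reverse inclusion.
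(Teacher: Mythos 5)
Your proof is correct and follows essentially the same strategy as the paper: write the implied inequality as a non-negative combination of the original rows via duality, then push that combination through the multilinearization map by $\mathbb{R}$-linearity. The one genuine difference is technical. The paper invokes an \emph{exact} decomposition $\alpha = u^{\top}A$, $\beta = u^{\top}b$ with $u \geq 0$ (citing an exercise of Conforti, Cornu\'ejols, and Zambelli, valid when $P$ has positive dimension), so no constant term appears and the conic combination of lifted row inequalities immediately gives the lifted inequality for $\alpha^{\top}x \geq \beta$. You instead use the more universally applicable Farkas form with slack $\delta := \mu^{\top}b - \beta \geq 0$, which forces the extra step of showing $\hat{L}(y) = y_{\emptyset}^{J_0,J_1} \geq 0$. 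You handle this cleanly: multiplying the explicitly present bound row $x_i \geq 0$ (for some $i \in J_1$) by the lifting monomial $L$ and applying idempotency $x_i^2 = x_i$ reproduces $\hat{L}$, so $\hat{L}(y) \geq 0$ is itself one of the defining constraints of $\M^k(\{Ax \geq b\})$; the remaining cases $J_1 = \emptyset$ (use $1 - x_j \geq 0$ for $j \in J_0$) and $k = 0$ ($\hat{L} = y_\emptyset = 1$) are treated directly. The upshot is that your argument is marginally more robust --- it sidesteps the positive-dimension caveat and explicitly covers the empty and zero-dimensional cases that the paper leaves implicit --- at the modest cost of the extra non-negativity observation for $\hat{L}$.
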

\begin{proof}
    Clearly $\M^k(\{Ax \geq b, \alpha^{\top} x \geq \beta\}) \subseteq \M^k(\{Ax \geq b\})$. We focus then in the other inclusion. 
    
    Consider the linearization of the  inequality $\prod_{i \in J_{1}} x_{i} \, \prod_{j \in J_{0}} (1-x_{j})(\alpha^{\top} x - \beta) \geq 0$, i.e.,
   \begin{align}
        \left( \sum_{j \in J_{1}} \alpha_{j} - \beta \right)
          f_{k}(J_{0}, J_{1})
        + \sum_{j \in [n] \setminus S}
          \alpha_j\, f_{k+1}(J_{1} \cup \{j\}, J_{0})
        &\ge 0, \label{eq:Mk_alphabeta}
    \end{align}
    where the value $f_{k}(J_{0},J_{1})(y) = \sum_{H \subseteq J_{0}} (-1)^{|H|} y_{J_{1} \cup H}$ is the multilinearization of $ \prod_{i \in J_{1}} x_{i} \prod_{j \in J_{0}} (1 - x_{j})$ when replacing $y_{I} := \prod_{i \in I} x_{i}$ for all $I \subseteq [n]$. Now we show that \eqref{eq:Mk_alphabeta} is implied by the inequalities of $\M^k(\{Ax \geq b\})$. Indeed, observe that there exists a vector of duals $u \geq 0$ such that $u^{\top} A = \alpha$ and $u^{\top} b = \beta$ (provided the polytope induced by $Ax \geq b$ has positive dimension, see Exercise 3.14 in~\cite{conforti_integer_2014}). Hence, if we denote by $A^j$ the $j$-th column of $A$, we have that
    \begin{align*}
        &\left( \sum_{j \in J_{1}} \alpha_{j} - \beta \right)
          f_{k}(J_{0}, J_{1})
        + \sum_{j \in [n] \setminus S}
          \alpha_j\, f_{k+1}(J_{1} \cup \{j\}, J_{0})
        \\
        &=  \left( \sum_{j \in J_{1}} (u^\top A^j) - u^\top b \right)
          f_{k}(J_{0}, J_{1})
        + \sum_{j \in [n] \setminus S}
          (u^\top A^j)\, f_{k+1}(J_{1} \cup \{j\}, J_{0})\\
          &=  u^{\top}\left(\left( \sum_{j \in J_{1}} A^{j} - b \right)
          f_{k}(J_{0}, J_{1})
        + \sum_{j \in [n] \setminus S}
          A^{j}\, f_{k+1}(J_{1} \cup \{j\}, J_{0})\right),
    \end{align*}
    which is a conic combinations of inequalities in $\M^k(\{Ax\ge b\})$.
\end{proof}

\subsection{Proof of Proposition~\ref{proposition:symmetry-is-inherited}}\label{appendix:proof_symmetry-is-inherited}

Assume that $P := \{ x \in \R^{n} \, : \, a_{i}^\top x \geq b_{i}, i \in [m]\} \subseteq [0,1]^{n}$, and recall that we are assumomg that the description of $P$ contain explicitly the linear inequalities $0\le x\le \ones$. Due to Lemma~\ref{lm:CharacterizationG(P)}, if $\pi \in G(P)$ we have that $(\pi a_{i})^\top x \geq b_{i}$ is valid for $P$. Hence, by Lemma~\ref{lemma:SAaddIneq}, if we add $(\pi a_{i})^\top x \geq b_{i}$ to the description of $P$, the set $\M^k(P)$ does not change. Therefore, we can assume that the set of inequalities describing $P$, $\{a_{i}^\top x \geq b_{i}: i \in [m]\}$, is close under permutations in $G(P)$, in the sense that $\alpha^\top x\ge \beta$ belongs to $\{a_{i}^\top x \geq b_{i}: i \in [m]\}$ if and only if $(\pi \alpha)^\top x\ge \beta$ belongs to $\{a_{i}^\top x \geq b_{i}: i \in [m]\}$.

We begin by introducing notation that will be useful for representing a description of $\M^{k}(P)$ by inequalities, see e.g.~\cite{sherali+adams94}. Let $x \in [0,1]^{n}$ and $y \in [0,1]^{\binom{n}{\leq k+1}}$ be two indeterminates. Given a subset $S \subseteq [n]$, $|S| = k$ and $2$-partition $J_{0} \sqcup J_{1} = S$, we define the multivariate polynomial $F_{k}(J_{0},J_{1})(x) := \prod_{i \in J_{1}} x_{i} \prod_{j \in J_{0}} (1 - x_{j})$. We denote its multilinearization by $f_{k}(J_{0},J_{1})(y)$. It is routine to check that $f_{k}(J_{0},J_{1})(y) = \sum_{H \subseteq J_{0}} (-1)^{|H|} y_{J_{1} \cup H}$, where $y_{I} := \prod_{i \in I} x_{i}$ for all $I \subseteq [n]$. By the latter, an explicit description of $\M^{k}(P)$ by inequalities is given by
\begin{subequations}\label{propostion:Mk_ineqs}
\begin{itemize}[leftmargin=3.5em,itemsep=1em,topsep=0.5em]
    \item[\textbf{SA-0:}] 
    \begin{align}
        y_{\emptyset} &= 1,
    \end{align}
    \item[\textbf{SA-1:}] $\forall J_{0} \sqcup J_{1} = S,\;
          S \subseteq [n],\;
          |S| = \min\{k+1, n\}$
    \begin{align}
        f_{|S|}(J_{0}, J_{1})(y) &\ge 0, \label{proposition:Mk_ineq1}
    \end{align}
    \item[\textbf{SA-2:}] $\forall J_{0} \sqcup J_{1} = S, \forall S \subseteq [n], |S| =k$
    \begin{align}
        \left( \sum_{j \in J_{1}} a_{i,j} - b_{i} \right)
          f_{k}(J_{0}, J_{1})
        + \sum_{j \in [n] \setminus S}
          a_{i,j}\, f_{k+1}(J_{1} \cup \{j\}, J_{0})
        &\ge 0, \label{proposition:Mk_ineq2}
    \end{align}
\end{itemize}
\end{subequations}
\noindent where $a_i=(a_{i,j})_{j \in [n]}$ denotes the $i$th row of the description of $P$ by inequalities. For $J_{0} \sqcup J_{1} = S$ with $|S| = k$, $J_{0}' \sqcup J_{1}' = S'$ with $|S'| = k+1 \leq n$, and $i \in [m]$ we define the vectors $a^{i}(J_{0},J_{1}), \, h(J_{0}',J_{1}') \in \R^{\binom{n}{\leq k+1}}$ as follows: for every subset $I \subseteq [n]$ with $|I| \leq k+1$,
{\small{
\begin{align*}
    r^{J_{0},J_{1}}(a_i,b_i)_{I} :&= 
    \begin{cases}
        (-1)^{|H|} \left( \sum_{j \in J_{1}} a_{i,j} - b_{i} \right) & \mbox{if $I = J_{1} \sqcup H, \, H \subseteq J_{0}$,} \\
        (-1)^{|H|} a_{i,j} & \mbox{if $I = J_{1} \sqcup \{j\} \sqcup H, \, H \subseteq J_{0}, \, j \in [n] \setminus S$,} \\
        0 & \mbox{o.w.}
    \end{cases} \\
    h^{J_{0}',J_{1}'}_{I} :&= 
    \begin{cases}
        (-1)^{|H|} & \mbox{if $I = J_{1}' \sqcup H$, $H \subseteq J_{0}'$,} \\
        0 & \mbox{o.w.}
    \end{cases}
\end{align*}
}}\begin{lemma}\label{lemma:action_on_constraints}
    Suppose that $G$ acts on $\R^{\binom{n}{\leq k+1}}$ as in~\eqref{action_on_R^2^n}. Let $a, y \in \R^{\binom{n}{\leq k+1}}$ and $\pi \in G$. Then $a^\top \pi y \geq 0$ if and only if $(\pi^{-1}a)^\top y \geq 0$.
\end{lemma}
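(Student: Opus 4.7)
The plan is to show the stronger statement that $a^\top \pi y = (\pi^{-1} a)^\top y$ as an identity, from which the equivalence of the two inequalities $a^\top \pi y \geq 0$ and $(\pi^{-1}a)^\top y \geq 0$ follows immediately. The claim is essentially that the action in~\eqref{action_on_R^2^n}, viewed as a linear operator on $\R^{\binom{n}{\leq k+1}}$, has adjoint given by the action of the inverse permutation. All the work is bookkeeping with the extended permutation action on subsets.

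First, I would expand $a^\top \pi y$ using the definition of the inner product on $\R^{\binom{n}{\leq k+1}}$ and the action~\eqref{action_on_R^2^n}:
\begin{equation*}
a^\top \pi y \;=\; \sum_{S \subseteq [n],\, |S|\le k+1} a_S\, (\pi y)_S \;=\; \sum_{S \subseteq [n],\, |S|\le k+1} a_S\, y_{\pi^{-1}(S)}.
\end{equation*}
Next, I would reindex the sum by setting $T := \pi^{-1}(S)$. Since $\pi$ is a bijection on $[n]$ and preserves the cardinality of a set (so it maps subsets of size $\le k+1$ bijectively onto subsets of size $\le k+1$), the map $S \mapsto \pi^{-1}(S)$ is a bijection on the index set. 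Hence
\begin{equation*}
\sum_{S \subseteq [n],\, |S|\le k+1} a_S\, y_{\pi^{-1}(S)} \;=\; \sum_{T \subseteq [n],\, |T|\le k+1} a_{\pi(T)}\, y_T.
\end{equation*}

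Finally, I would identify the right-hand side as $(\pi^{-1}a)^\top y$. Applying~\eqref{action_on_R^2^n} with $\pi$ replaced by $\pi^{-1}$, we get $(\pi^{-1}a)_T = a_{(\pi^{-1})^{-1}(T)} = a_{\pi(T)}$, so the sum above is exactly $\sum_{T} (\pi^{-1}a)_T y_T = (\pi^{-1}a)^\top y$. This gives the equality $a^\top \pi y = (\pi^{-1}a)^\top y$, and the biconditional of the lemma follows at once.

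There is no real obstacle here: the proof is a direct unrolling of definitions, and the only thing to be careful about is that the reindexing $S \mapsto \pi^{-1}(S)$ is indeed a bijection on the set of subsets of $[n]$ of cardinality at most $k+1$, which is immediate from $\pi$ being a bijection on $[n]$.
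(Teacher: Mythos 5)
Your proof is correct and takes essentially the same approach as the paper: both establish the stronger identity $a^\top \pi y = (\pi^{-1}a)^\top y$. The paper phrases this by observing that $\pi$ acts through an orthogonal permutation matrix $R_\pi$ on $\R^{\binom{n}{\leq k+1}}$ and using $(R_\pi)^{-1} = R_\pi^\top$, whereas you unroll that same fact directly via the reindexing $T = \pi^{-1}(S)$; the two arguments are interchangeable.
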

\begin{proof}
    It is not hard to see that each $\pi \in G$ can be associated with a permutation matrix $R_{\pi} \in \{0,1\}^{\binom{n}{\leq k+1} \times \binom{n}{\leq k+1}}$, hence we can write the action of $\pi$ on $y$, i.e., $\pi y$, as $R_{\pi} y$. Moreover, a permutation matrix is an orthogonal transformation, i.e., $(R_{\pi})^{-1} = R_{\pi}^{\top}$, then we can write $a^\top \pi y = a^\top R_{\pi} y = (a^\top R_{\pi}) y = (R_{\pi}^\top a)^\top y = (R_{\pi}^{-1}a)^\top y = (\pi^{-1} a)^\top y$.
\end{proof}

Next, we prove that for any $y \in \M^{k}(P)$ and any $\pi \in G$, we have that $\pi y \in \M^{k}(P)$. Indeed, for $J_{0} \sqcup J_{1} = S$ with $|S| = k$ observe that $(\pi r^{J_{0},J_{1}}(a_i,b_i))_{I} = (r^{J_{0},J_{1}}(a_i,b_i))_{\pi^{-1}(I)} = (r^{\pi(J_{0}),\pi(J_{1})}(\pi^{-1}a_i,b_i))_{I}$ for $I \subseteq [n]$ with $|I| \leq k+1$.
Therefore, by Lemma~\ref{lemma:action_on_constraints} we have that 
\begin{align*}
    r^{J_{0},J_{1}}(a_i,b)^\top (\pi y) \geq 0 &\iff (\pi^{-1}r^{J_{0},J_{1}}(a_i,b))^\top y \geq 0 \\
    &\iff (r^{\pi^{-1}(J_{0}),\pi^{-1}(J_{1})}(\pi^{-1}a_i,b_i))^\top y \geq  0.
\end{align*}
Since $\pi$ is a permutation action on $[n]$, we have that $\pi (S) \subseteq [n]$, $|\pi (S) | = k$ and $\pi (J_{0}) \sqcup \pi(J_{1}) = \pi(S)$ is a $2$-partition. Finally, as we assume that if the inequality $a_i^\top x\ge b$ is present in our description of $P$, then also $(\pi^{-1} a_i)^\top x\ge b$ is also present. Hence $(r^{\pi^{-1}(J_{0}),\pi^{-1}(J_{1})}(\pi^{-1}a_i,b_i))^\top y \geq  0$ is a valid inequality of $\M^k(P)$. The same argument can be applied to show that $h^{J_{0}',J_{1}'} (\pi y) \geq 0$ holds for an arbitrary $2$-partition $J_{0}' \sqcup J_{1}' = S'$ with $|S'| = k+1 \leq  n$.
We conclude that if $y\in \M^k(P)$ then also $\pi y\in \M^k(P)$. Thus $\M^k(P)$ is $G$-invariant.

\subsection{Symmetric Operators}\label{app:symmetricOps}

In this section, we give a proof of Proposition~\ref{proposition:symmetry_preserving}.
Let $\mathcal{C}_n$ be the collection of all polytopes in $\mathbb{R}^n$. We say that an operator $T: \mathcal{C}_{n} \to \mathcal{C}_{n}$ is \emph{well-defined} if for every polytope $P \subseteq [0,1]^{n}$: (1) $T$ is integer valid, i.e., $T(P) \cap \{0,1\}^{n} = P \cap \{0,1\}^{n}$, and (2) and $T(P) \subseteq P$; and (3) $T$ is monotone, i.e., for every pair of polytopes $P,Q \subseteq [0,1]$, if $P\subseteq Q$, then $T(P) \subseteq T(Q)$. We also say that $T$ is \emph{symmetry-preserving} if for every $G$-invariant polytope $P$, $T(P)$ is also $G$-invariant. Finally, for a permutation group $G\le \mathcal{S}_n$, we define its fixed space as $\fix(G):=\{x\in \mathbb{R}^n: \pi x=x \text{ for all } \pi\in G\}$.

\begin{proposition}\label{proposition:symmetric_operator}
    Let $P$ be a $G$-invariant polytope. If $T$ is a well-defined symmetry-preserving operator, then 
    \begin{equation*}
        T(P) \neq \emptyset \iff \fix(G) \cap T(P) \neq \emptyset.
    \end{equation*}
\end{proposition}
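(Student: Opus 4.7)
The reverse implication ($\Leftarrow$) is immediate since $\fix(G) \cap T(P) \subseteq T(P)$. The work lies in the forward direction: assume $T(P) \neq \emptyset$ and produce a point in $\fix(G) \cap T(P)$. The natural candidate is the image of an arbitrary $x \in T(P)$ under the Reynolds operator $\beta_G$.

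The plan is to verify two things about $\beta_G(x)$. First, $\beta_G(x) \in T(P)$. Since $P$ is $G$-invariant and $T$ is symmetry-preserving, $T(P)$ is $G$-invariant; hence for every $\pi \in G$ we have $\pi x \in T(P)$. Because $T(P)$ is a polytope, it is convex, so the average $\beta_G(x) = \frac{1}{|G|}\sum_{\pi \in G}\pi x$ belongs to $T(P)$. Second, $\beta_G(x) \in \fix(G)$. For any $\sigma \in G$, the map $\pi \mapsto \sigma \pi$ is a bijection of $G$ (using closure of $G$ under composition and inverses), so
\begin{equation*}
\sigma\, \beta_G(x) \;=\; \frac{1}{|G|}\sum_{\pi \in G} \sigma \pi x \;=\; \frac{1}{|G|}\sum_{\pi' \in G} \pi' x \;=\; \beta_G(x),
\end{equation*}
which gives $\beta_G(x) \in \fix(G)$. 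Combining the two properties yields $\beta_G(x) \in \fix(G) \cap T(P)$, as desired.

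There is essentially no obstacle here: the argument uses only convexity of $T(P)$ (automatic since $T$ maps polytopes to polytopes), the symmetry-preserving property of $T$ (which propagates $G$-invariance from $P$ to $T(P)$), and the standard algebraic fact that averaging over a group lands in the fixed space. Properties (1) (integer validity) and (2) ($T(P) \subseteq P$) from the definition of well-defined are not needed in the argument, and monotonicity (property (3)) is likewise unused; only the fact that $T(P)$ is a (convex) polytope plus symmetry-preservation play a role. Hence the proof reduces to these three short verifications.
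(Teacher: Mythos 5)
Your proof is correct and follows essentially the same route as the paper: pick $x \in T(P)$, observe that symmetry-preservation and convexity of $T(P)$ force $\beta_G(x) \in T(P)$, and check by the change-of-variables argument that $\beta_G(x) \in \fix(G)$. Your closing remark that integer validity, $T(P)\subseteq P$, and monotonicity are not needed is accurate, though not noted in the paper.
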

\begin{proof}
    Necessity is straightforward. To prove sufficiency, pick any $x \in T(P)$. Given that $T$ is symmetry-preserving we have that $\orb_{G}(x)\subseteq T(P)$. By convexity of $T(P)$ we have that $\conv (\orb_{G}(x)) \subseteq T(P)$. Finally, the barycenter of the orbit of $x$ under $G$, $\beta_G(x)$, clearly belongs to $\conv(\orb_{G}(x))\subseteq T(P)$. Finally, we observe that $\beta_G(x)\in \fix(G)$, as for every $\pi'\in G$,
    \[
    \pi' \beta_G(x)
= \pi'\frac1{|G|}\sum_{\pi\in G} \pi x= \frac1{|G|}\sum_{\pi\in G} \pi'\pi x = \frac1{|G|}\sum_{\pi''\in G} \pi'' x,\]
where in the last equality we used the changed of variables $\pi'':=\pi'\pi$.
\end{proof}

Let $Y \in \R^{(n+1) \times (n+1)}$ be a matrix with row and column indices in $\{0,1,\ldots,n\}$. For a permutation $\pi\in \mathcal{S}_n$, we extend it to $\{0,1\ldots,n\}$ by simply defining $\pi(0)=0$. We define the \emph{action of $G$ on any lifting matrix}  as $(\pi \cdot Y)_{i,j} := Y_{\pi^{-1}(i),\pi^{-1}(j)}$ for every $i,j \in \{0,1,\ldots,n\}$.

\begin{proof}[Proof of Proposition~\ref{proposition:symmetry_preserving}]
    It suffices to prove the statement for $k=1$. Let us show first that $\LS_{0}$ is symmetry-preserving. Assume that $P$ is $G$-invariant. We aim to show that $\LS_{0}(P)$ is also $G$-invariant, that is, if $x \in \LS_{0}(P)$ and $\pi\in G$ then $\pi x \in \LS_{0}(P)$. Let $Y$ be the lifting matrix associated to $x$, where $Ye_{0} = (1, x)^{\top}$ and $Ye_i,Y(e_0-e_i)\in K(P)$.
    We show that $\pi \cdot Y$ is a lifting matrix for $\pi x$. Let us extend the action of $\pi$ to vectors in $\mathbb{R}^{n+1}$ as $\pi (x_0,\ldots,x_n)= (x_0,x_{\pi^{-1}(1)},\ldots,x_{\pi^{-1}(n)})$. Then, $(\pi\cdot Y)e_i = \pi\cdot (Y e_{\pi^{-1}(i)})\in \pi K(P) = \{\pi x: x\in K(P)\}$. Using the straightforward fact that $K(P)$ is $G$-invariant (for the action of $G$ extended to $\mathbb{R}^{n+1}$), we obtain that $\pi K(P) = K(P)$, and hence $(Y e_{\pi^{-1}(i)})\in K(P)$ for every $i\in [n]$. With this we conclude that $(\pi Y) e_{i}\in K(P)$. The exact same analysis shows that  $(\pi Y)(e_0-e_{i})\in K(P).$ Finally, we observe that due to the definition of the action of $\pi$ on $Y$ it holds that $(\pi Y)e_0 = (1,\pi x)= (\pi Y)^{\top}e_0=\text{diag}(\pi Y)$. We conclude that $\pi x \in \LS_0$. 

    Finally, assume that $x\in \LS(P)$. We follow the same proof as for $\LS_0$ and show that if $Y$ is the lifting matrix for $x\in \LS$, then $\pi Y$ is the corresponding lifting matrix for $\pi x$. Indeed, we have that $Y$ is symmetric, $Y=Y^{\top}$. By using the definition of $\pi Y$, we obtain that $\pi Y$ is also symmetric, as for all $i,j\in \{0,1\ldots,n\}$, 
    \[(\pi Y)_{i,j}= Y_{\pi^{-1}(i),\pi^{-1}(j)}=Y_{\pi^{-1}(j),\pi^{-1}(i)} = (\pi Y)_{j,i}.\]
    We conclude that $\pi x\in \LS(P)$.   
\end{proof}

\section{Proof of Theorem~\ref{main_theorem}}\label{app:proof-main-theorem}

\subsection{\texorpdfstring{Proofs from~\ref{mainthm-a} $\Rightarrow$ \ref{mainthm-b}}{Proofs from \ref{mainthm-a} ⇒ \ref{mainthm-b}}}

We present here the proofs from the deduction~\ref{mainthm-a}$\Rightarrow$\ref{mainthm-b} that are excluded from the main body. These are the proof of Lemma~\ref{cl:system-in-gamma}, and the proofs of Steps 3.~and 4. We start with the former.

\begin{proof}[Proof of Lemma~\ref{cl:system-in-gamma}]
Let $\overline \gamma, \overline \lambda$ be a solution to the system~\eqref{eq:z_in_M0-new}--\eqref{eq:lambda-gamma-nonnegative} that is strictly positive in all components and set $\overline y_I = \overline \gamma_{|I|}$ for all $I\subseteq [n]$, $|I|\leq k+1$.
Observe that $\overline y \in [0,1]^{\binom{n}{\leq k+1}}$. 

For a $2$-partition $J_0 \sqcup J_1$ of $S$ with $|S|=k$, let $\overline{y}^{J_0,J_1}$ and $z^{J_{0}, J_{1}}(\overline y)$ be as in~\eqref{eq:z}. To show that $\overline y \in \M^{k}(P)$, it suffices to verify that the conditions from Proposition~\ref{prop:SA_characterization} hold. In other words, (i) $z^{J_{0},J_{1}}(\overline{y}) \in \M^{0}(P)$ for every $2$-partition $J_{0} \sqcup J_{1} = S$, and (ii) $\overline y_{I} = \sum_{J_{0} \sqcup J_{1} = S} \overline y^{J_0,J_1}_\emptyset z^{J_{0},J_{1}}_{I}(\overline y)$ for every $I \subseteq [n]$ with $|I| \leq 1$. 

Observe that, for $|J_1|=\ell$, we have \begin{equation}\label{eq:useful}\overline y^{J_0,J_1}_\emptyset= \sum_{H \subseteq J_0} (-1)^{|H|} \overline{y}_{J_1 \cup H} = \sum_{r =0}^{k-\ell} (-1)^{r} \binom{k-\ell}{r} \overline \gamma_{r + \ell} = \overline \lambda_{\ell} > 0,\end{equation}
where the first equality holds by definition of $y^{J_0,J_1}_\emptyset$, the second by definition of $\overline y$, the third by~\eqref{eq:ugly-inequality}, and the inequality by the fact that $\overline \lambda_\ell>0$ by hypothesis.
Thus, $z^{J_0,J_1}_I(\overline y)=\overline y^{J_{0},J_{1}}_{I}/\overline y^{J_{0},J_{1}}_{\emptyset}$ for all $I\subseteq [n]$, $|I|\leq 1$.

We first show that condition (i) holds for $2$-partitions of the form $J_{0} = [k] \setminus [\ell]$, $J_{1} = [\ell]$ for $\ell \in \{0,\dots,k\}$. We abbreviate $z^{\ell}(\overline{y}) := z^{[k] \setminus [\ell],[\ell]}(\overline y)$ and $\overline y^{\ell}:=\overline y^{[k]\setminus [\ell],[\ell]}$.

We show (i) by verifying that $z^{\ell}(\overline{y}) = (1, \overline x^\ell )$ for every $\ell\in \{0,\dots,k\}$. The statement follows by Lemma~\ref{cl:z-in-M0}. 
Let $I \subseteq [n]$ with $|I| \leq 1$. For $I = \emptyset$, then ${z}_{I}^{\ell}(\overline y) = 1$ by definition. Thus, let $I=\{j\}$. To compute $z^\ell_j(\overline y)$, observe that for $j \in [\ell]$ we can write
\begin{align*}\overline y^{\ell}_{j} & = \sum_{H \subseteq [k]\setminus [\ell]} (-1)^{|H|} \overline y_{[\ell] \cup \{j\} \cup H} = \sum_{H \subseteq [k]\setminus [\ell]} (-1)^{|H|} \overline y_{[\ell] \cup H} = \overline y^\ell_\emptyset,
\end{align*}
where the last equality follows by definition of $\overline y_\emptyset^{\ell}$ in \eqref{eq:z}.
Thus, $z_j^\ell(\overline y)=\overline y^\ell_\emptyset /\overline y^\ell_\emptyset=1=\overline x ^\ell_j$, as required.
Suppose now $j \in [k] \setminus [\ell]$. Then $j \in J_0$. Since we know that $\overline y \in [0,1]^{ \binom{n}{\leq k+1}}$, we can apply Lemma~\ref{lem:cancels-out} and obtain
\begin{align*}\overline y_{j}^{J_{0},J_{1}} & = \sum_{H \subseteq J_{0}} (-1)^{|H|} \overline y_{J_{1} \cup \{j\} \cup H} = 0.
\end{align*}
 Hence, we conclude that $z_j^\ell(\overline y)=0=\overline x^\ell_j$, as required.

Last, let $j \notin [k]$. We want to show that $\overline z_j^\ell=\overline x_j^\ell$. Notice that $\overline z_j^\ell=\overline y^\ell_j/\overline y^\ell_\emptyset=\overline y^\ell_j/\overline \lambda_\ell$, where the last equation follows  by~\eqref{eq:useful}. In turn, $\overline x_j^\ell=\Delta_\ell$ by definition. Since
\begin{align*}\overline y^{\ell}_{j} & = \sum_{H \subseteq [k]\setminus [\ell]} (-1)^{|H|} \overline y_{[\ell] \cup \{j\} \cup H} = \sum_{r =0}^{k-\ell} (-1)^{r}\binom{k-\ell}{r} \overline \gamma_{r + \ell+1},
\end{align*} 
we have that $\frac{\overline{y}_{j}^{\ell}}{\overline{\lambda}_{\ell}} = \overline x^{\ell}_j$ is equivalent to 
\begin{align}
     &\sum_{r=0}^{k-\ell} \binom{k-\ell}{r} (-1)^{r} \overline{\gamma}_{\ell+1+r} = \Delta_{\ell} \sum_{r=0}^{k-\ell} \binom{k-\ell}{r} (-1)^{r} \overline{\gamma}_{\ell+r} \nonumber\\
    \iff &\sum_{r=0}^{k-\ell} \binom{k-\ell}{r} (-1)^{r} \overline{\gamma}_{\ell+1+r} + \sum_{r=1}^{k-\ell} \binom{k-\ell}{r}\Delta_{\ell} (-1)^{r+1} \overline{\gamma}_{\ell+r} = \Delta_{\ell} \overline{\gamma}_{\ell}  \nonumber \\
    \iff &\sum_{r=0}^{k-\ell} (-1)^{r} \left[ \binom{k-\ell}{r} + \binom{k-\ell}{r+1} \Delta_{\ell} \right] \overline{\gamma}_{\ell+1+r} = \Delta_{\ell} \overline{\gamma}_{\ell} \nonumber\\
    \iff &-\Delta_{\ell} \overline{\gamma}_{\ell}+ \sum_{r=\ell+1}^{k+1} (-1)^{\ell + r - 1} \left[ \binom{k-\ell}{r- \ell - 1} + \binom{k-\ell}{r-\ell} \Delta_{\ell} \right] \overline{\gamma}_{r} = 0\label{linearsystem:conditioning2}
\end{align}
where the penultimate equivalence follows by a change of index variables $r' = r+1$ in the second sum at the LHS. Equation \eqref{linearsystem:conditioning2} is then implied by~\eqref{eq:z_in_M0-new},~\eqref{eq:ugly-inequality},~\eqref{linearsystem:A_definition-correct} and $\overline{\gamma}_{0} = 1$. This concludes the proof that $\overline z^\ell = (1,\overline{x}^{\ell})$. Since $P$ is $G$-invariant, we deduce that $\overline z^{J_0,J_1} = (1,x^{J_0,J_1})$ for all $S\subseteq [n]$, $|S|=k$, and $J_0 \sqcup J_1 = S$. Hence, (i) holds. 

To show (ii), recall the we observed in~\eqref{eq:useful} that $\overline y_\emptyset^{J_0,J_1}=\overline \lambda_{|J_1|}$ for every $2$-partition $J_0 \sqcup J_1$ of $S\subseteq [n]$ with $|S|=k$. Let $I=\{j\} \in [n]$. Recall that we also just observed that $z_j^{J_0,J_1}(\overline y)=\overline x_j^{J_0,J_1}$. If $j \notin S$, then
\begin{align*} 
        \sum_{\substack{J_{0} \sqcup J_{1} = S}}
        \overline y_{\emptyset}^{J_{0},J_{1}} z_{j}^{J_{0},J_{1}}(\overline y) & = 
        \sum_{\substack{J_{0} \sqcup J_{1} = S}}\overline y_{\emptyset}^{J_{0},J_{1}} \overline x_j^{J_0,J_1}  \\
        & = \sum_{\substack{J_{0} \sqcup J_{1} = S}}\overline\lambda_{|J_1|} \Delta_{|J_1|} & \hbox{by~\eqref{eq:xell} and~\eqref{eq:useful}} \\
        & = \sum_{\ell=0}^{k} \binom{k}{\ell} \overline\lambda_{\ell} \Delta_{\ell}\\
        & = \overline \gamma_1 = \overline y_j & \hbox{by~\eqref{eq:convex_combination_singletons:fractional-new} and ~\eqref{eq:overliney}.}          
\end{align*}
If conversely $j \in S$, we have
\begin{align*}
        \sum_{\substack{J_{0} \sqcup J_{1} = S}}
        \overline y_{\emptyset}^{J_{0},J_{1}} z_{j}^{J_{0},J_{1}}(\overline y) & = 
        \sum_{\substack{J_{0} \sqcup J_{1} = S}}\overline y_{\emptyset}^{J_{0},J_{1}} \overline x_j^{J_0,J_1} \\
        & = 
        \sum_{\substack{J_{0} \sqcup J_{1} = S}: j \in J_0}\overline y_{\emptyset}^{J_{0},J_{1}} \overline x_j^{J_0,J_1} + &\sum_{\substack{J_{0} \sqcup J_{1} = S}: j \in J_1}\overline y_{\emptyset}^{J_{0},J_{1}} \overline x_j^{J_0,J_1}\\
        & = \sum_{\substack{J_{0} \sqcup J_{1} = S}: j \in J_1}\overline\lambda_{|J_1|} &\hbox{by~\eqref{eq:xell}}\\
        & = \sum_{\ell=1}^{k} \binom{k-1}{\ell-1} \overline\lambda_{\ell}\\
          & = \overline \gamma_1 = \overline y_1 & \hbox{by~\eqref{eq:convex_combination_singletons:integer-new} and~\eqref{eq:overliney}.}
                \end{align*}
To conclude that (ii) holds, let $I=\emptyset$ and observe 
\begin{align*}\sum_{\substack{J_{0} \sqcup J_{1} = S}}
y_{\emptyset}^{J_{0},J_{1}} z_{\emptyset}^{J_{0},J_{1}}(y) & = \sum_{\substack{J_{0} \sqcup J_{1} = S}} \overline y_{\emptyset}^{J_{0},J_{1}}, \\ 
& = \sum_{\substack{J_{0} \sqcup J_{1} = S}} \overline \lambda_{|J_1|} & \hbox{by~\eqref{eq:useful}} \\
& = \sum_{\ell=0}^{k} \binom{k}{\ell} \overline \lambda_\ell \\
& = 1 & \hbox{by~\eqref{eq:convex_combination_emptyset-new}.}
\end{align*}
\end{proof}

\paragraph{3.~Solution to the system defining $\overline \gamma$.} We are left to show the existence of a solution to the  system from Lemma~\ref{cl:system-in-gamma}. To achieve that, we give in Lemma~\ref{linearsystem:determinant_theorem} an explicit formula for the determinant of $A^{t}_{\Theta}$. Recall that this is the constraint matrix of system~\eqref{eq:z_in_M0-new} that uniquely defines $\overline \gamma$. To prove this formula, we start with some intermediate steps. For a vector $\Theta=(\Theta_0,\dots,\Theta_k)\in (0,1)^{t+1}$, we sometimes make the dependency on the components of $\Theta$ more explicit and write $A^t_{\Theta_0,\dots,\Theta_{t+1}}=A^t_{\Theta}$.

\begin{lemma}\label{lem:decompose-detA}
Let $t \in \N$ and $\Theta=(\Theta_0,\dots, \Theta_1) \in (0,1)^{t+1}$. The determinant of $A^{t}_{\Theta}$ can be decomposed as
\begin{equation}\label{linearsystem:determinant_decomposition}
        \det A^{t}_{\Theta} = \Theta_{0} \det A^{t-1}_{\Theta_{1},\dots,\Theta_{t}} + (1-\Theta_{t}) \det A^{t-1}_{\Theta_{0},\dots,\Theta_{t-1}}.
    \end{equation}
    \end{lemma}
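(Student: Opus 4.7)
My plan is to prove the identity directly, without induction, by a carefully chosen sequence of column operations that brings the two terms on the right-hand side into view via a single cofactor expansion. The first move is the determinant-preserving column operation $C_1 \to C_1 + C_2 + \cdots + C_{t+1}$, so that the $\ell$-th entry of the new first column equals the row-$\ell$ sum of $A^t_\Theta$. A direct computation---splitting each off-diagonal entry into its binomial part and its $\Theta_\ell$-part, and using $\sum_{j=0}^{m}(-1)^j \binom{m}{j} = 0$ for $m \ge 1$---shows that this sum is $\Theta_0$ for $\ell = 0$, equals $0$ for $1 \le \ell \le t-1$ (the diagonal $-\Theta_\ell$ cancels the alternating sum involving $\Theta_\ell$), and equals $1 - \Theta_t$ for $\ell = t$. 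Expanding along this new first column yields
\[
\det A^t_\Theta \;=\; \Theta_0\, M_{0,1} \;+\; (-1)^t(1-\Theta_t)\, M_{t,1},
\]
where $M_{0,1}$ and $M_{t,1}$ are minors in the unchanged columns $2, \ldots, t+1$.

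For $M_{0,1}$, the submatrix lives on rows $1, \ldots, t$ and columns $2, \ldots, t+1$; under the reindexing $\ell \mapsto \ell - 1$, $r \mapsto r - 1$, the sign $(-1)^{\ell + r - 1}$ is preserved and the binomials $\binom{t - \ell}{\cdot}$ become $\binom{(t-1) - \ell'}{\cdot}$, matching $A^{t-1}_{\Theta_1, \ldots, \Theta_t}$ exactly. Hence $M_{0,1} = \det A^{t-1}_{\Theta_1, \ldots, \Theta_t}$, providing the first summand of the recursion.

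The technical core is identifying $M_{t, 1}$ (the submatrix on rows $0, \ldots, t-1$ and columns $2, \ldots, t+1$) with $(-1)^t \det A^{t-1}_{\Theta_0, \ldots, \Theta_{t-1}}$. Here the binomials $\binom{t - \ell}{\cdot}$ differ from those of $A^{t-1}$, so reindexing alone does not suffice. I would apply Pascal's identity to each entry to establish
\[
(A^t_\Theta)_{\ell, r} \;=\; (A^{t-1}_{\Theta_0, \ldots, \Theta_{t-1}})_{\ell, r} \;-\; (A^{t-1}_{\Theta_0, \ldots, \Theta_{t-1}})_{\ell, r-1} \qquad \hbox{for } \ell \in \{0, \ldots, t-1\},
\]
where $A^{t-1}$ is extended by $-\Theta_0$ at position $(0, 0)$ and by zeros at the phantom column $t+1$; the boundary cases $r = 1$ and $r = t+1$ must be verified separately. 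Thus each column of $M_{t, 1}$ is a forward difference of columns of the extended $A^{t-1}$. A determinant-preserving suffix-sum column operation $C_r \to C_r + C_{r+1} + \cdots + C_{t+1}$ then telescopes, turning column $r$ into $-C^{A^{t-1}}_{r-1}$, so $M_{t, 1}$ transforms into $-A^{t-1}_{\Theta_0, \ldots, \Theta_{t-1}}$ and $M_{t, 1} = (-1)^t \det A^{t-1}_{\Theta_0, \ldots, \Theta_{t-1}}$. Substituting back, the $(-1)^t$ factors cancel and the stated identity follows. The main obstacle is precisely this last step: the binomial mismatch forces the Pascal-based column-difference identity with careful tracking of the boundary entries at columns $0$ and $t+1$, followed by the telescoping suffix sum.
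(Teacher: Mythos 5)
Your proof is correct, and it takes a genuinely different route from the paper's. Both arguments begin with the same observation: replacing the first column of $A^{t}_{\Theta}$ by its total column sum produces a column with exactly two nonzeros, $\Theta_{0}$ at row $0$ and $1-\Theta_{t}$ at row $t$, so that Laplace expansion along that column immediately yields two minors. From there the methods diverge. The paper performs the suffix-sum column operation on \emph{every} column at once, producing the matrix $B^{t}_{\Theta}$, derives its full entrywise formula (Claim~\ref{cl:B}, with its four cases and two binomial identities), and then identifies the two minors by inspecting that formula: the first minor is $\det B^{t-1}_{\Theta_{1},\dots,\Theta_{t}}$ and the second, after a sign flip in every entry, is $\det A^{t-1}_{\Theta_{0},\dots,\Theta_{t-1}}$. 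You instead touch only the first column, leaving columns $2,\dots,t+1$ equal to those of $A^{t}_{\Theta}$. You then recognize the minor $M_{0,1}$ directly as $A^{t-1}_{\Theta_{1},\dots,\Theta_{t}}$ by reindexing---a pleasant fact that the paper never uses because it works with $B$ rather than $A$---and handle $M_{t,1}$ by expressing each of its columns as a forward difference of columns of $A^{t-1}_{\Theta_{0},\dots,\Theta_{t-1}}$ (via Pascal's identity, with a zero phantom column at index $t+1$) and then applying a second suffix-sum telescope, yielding $-A^{t-1}$ and hence $\det M_{t,1}=(-1)^{t}\det A^{t-1}_{\Theta_{0},\dots,\Theta_{t-1}}$. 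I verified the Pascal column-difference identity entrywise, including the boundary case $r=t+1$ and the diagonal/subdiagonal cases; it holds. The trade-off is that the paper front-loads all the work into one closed form for $B^{t}_{\Theta}$, whereas you avoid computing that closed form at the cost of a separate Pascal-plus-telescope argument for the second minor. Both are about the same total effort, but your variant is leaner in that it never needs the full structure of $B^{t}_{\Theta}$. One small slip: you speak of extending $A^{t-1}$ ``by $-\Theta_{0}$ at position $(0,0)$,'' but since the difference identity is only invoked for columns $r\geq 2$, you never reference column $0$ of $A^{t-1}$, and the only extension actually needed is the zero phantom column at $t+1$.
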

{

        The idea of the proof of Lemma~\ref{lem:decompose-detA} is to transform $A^{t}_{\Theta}$ via elementary column operations into another matrix $B^{t}_{\Theta}$ for which we deduce the relation~\eqref{linearsystem:determinant_decomposition}. As $\det A^{t'}_{\Theta'} = \det B^{t'}_{\Theta'}$ for all $t',\Theta'$, the relation~\eqref{linearsystem:determinant_decomposition} also holds for $A^{t}_{\Theta}$.      
}
For $t \in \N$ and $\Theta=(\Theta_0,\dots,\Theta_t) \in \R^t$, we define 
$B^{t}_{\Theta_{0},\dots,\Theta_{t}} \in \R^{(t+1)\times(t+1)}$ with rows indexed in $\{0,\dots,t\}$ and columns indexed in $\{1,\dots,t+1\}$ as follows: its $r$-th column is given by
        \begin{equation*}
            (B^{t}_{\Theta})_{\cdot,r} := \sum_{j=r}^{t} (A^{t}_{\Theta})_{\cdot,j}
        \end{equation*}

\begin{example}\label{ex:matrix-A}We next illustrate matrices $A^t_\Theta$ and $B^t_\Theta$ with some examples. We can write $A^1_{\Theta_0,\Theta_1}$ and transform it into $B^1_{\Theta_0,\Theta_1}$ as follows:  
        \begin{equation*}
        A^1_{\Theta_0,\Theta_1}=\begin{pmatrix}
        1 + \Theta_{0} & - 1 \\
        - \Theta_{1} & 1 
        \end{pmatrix}
        \underset{\text{\footnotesize adding 2 to 1}}{\rightsquigarrow}
        \begin{pmatrix}
            \Theta_{0} & - 1 \\
            1-\Theta_{1} & 1
        \end{pmatrix} = B_{\Theta_{0},\Theta_{1}}^{1}.
        \end{equation*}
Similarly, letting $\Theta=(\Theta_0,\Theta_1,\Theta_2)$, we have, 
 {\small{       \begin{align*}
A_{\Theta}^{2} =         \begin{pmatrix}
        1 + 2 \Theta_{0} & -2 - \Theta_{0} & 1 \\
        - \Theta_{1} & 1 + \Theta_{1} & - 1 \\
        0 & - \Theta_{2} & 1 
        \end{pmatrix}
        &\underset{\text{\footnotesize adding 2,3 to 1}}{\rightsquigarrow}
        \begin{pmatrix}
            \Theta_{0} & -2 - \Theta_{0} & 1 \\
            0 & 1 + \Theta_{1} & - 1 \\
            1-\Theta_{2} & - \Theta_{2} & 1 
        \end{pmatrix} \nonumber \\
        &\underset{\text{\footnotesize adding 3 to 2}}{\rightsquigarrow}
        \begin{pmatrix}
            \Theta_{0} & -1 - \Theta_{0} & 1 \\
            0 & \Theta_{1} & - 1 \\
            1-\Theta_{2} & 1 - \Theta_{2} & 1 
        \end{pmatrix}
        =B^{2}_{\Theta}.
        \end{align*}}}
\end{example}

\begin{myclaim}\label{cl:B}    For $\ell \in \{0,\dots,t\}$ and $r \in \{1,\dots,t+1\}$, entry $(\ell,r)$ of $B^{t}_{\Delta}$ is given by  \begin{equation}\label{full_dimensional:definition_B-correct}
    (B^{t}_{\Delta})_{\ell,r} = 
    \begin{cases}
        (-1)^{\ell + r - 1} \left[ \binom{t-\ell - 1}{r - \ell - 2} + \binom{t-\ell - 1}{r - \ell - 1} \Theta_{\ell} \right] & \text{if $\ell < r \leq t$}, \\
       1-\Theta_{t} & \text{if $\ell =t$ and $r \leq t$}, \\
       (-1)^{\ell + t} & \text{if $\ell \leq t$ and $r = t + 1$}, \\
       0  & \text{o.w.~(if $\ell \geq r$, $\ell <t$)}
    \end{cases}
    \end{equation}
\end{myclaim}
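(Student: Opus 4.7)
The plan is to verify formula~\eqref{full_dimensional:definition_B-correct} by direct computation. Starting from the definition, the $(\ell,r)$-entry of $B^{t}_{\Theta}$ is the sum of the row-$\ell$ entries of $A^{t}_{\Theta}$ in columns $r$ through $t+1$ (as confirmed by the worked examples). The strategy is to substitute~\eqref{linearsystem:A_definition-correct} into this sum and collapse the resulting alternating binomial sums using standard identities, splitting the argument into cases according to how $\ell$ sits relative to $r$ and $t$.

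\textbf{Main case ($\ell < r \leq t$).} Here every summand comes from the top branch of~\eqref{linearsystem:A_definition-correct}. Setting $m := t-\ell$, $p := r - \ell - 1$ and substituting $s = j - \ell - 1$, the sum becomes
\begin{equation*}
(B^{t}_{\Theta})_{\ell, r} \;=\; \sum_{s=p}^{m} (-1)^{s} \binom{m}{s} \;+\; \Theta_\ell \sum_{s=p}^{m} (-1)^{s} \binom{m}{s+1}.
\end{equation*}
I would then apply the standard identity $\sum_{s=0}^{j}(-1)^{s} \binom{m}{s} = (-1)^{j} \binom{m-1}{j}$, valid for $m \geq 1$ (which holds since $\ell < t$), together with $\sum_{s=0}^{m}(-1)^{s}\binom{m}{s} = 0$. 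Re-indexing by $s' = s + 1$ in the second sum reduces it to the same identity, and combining the two evaluations yields
\begin{equation*}
(B^{t}_{\Theta})_{\ell, r} \;=\; (-1)^{p}\!\left[\binom{m-1}{p-1} + \binom{m-1}{p}\Theta_\ell\right],
\end{equation*}
which coincides with~\eqref{full_dimensional:definition_B-correct} after substituting back and using $(-1)^{r-\ell-1} = (-1)^{r+\ell-1}$.

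\textbf{Remaining cases.} For $\ell < t$ and $r = t+1$, specialising the above to $p = m$ annihilates the second sum and reduces the first to $(-1)^{m}$, giving $(-1)^{t+\ell}$. For $\ell = t$, only the entries $(A^{t}_{\Theta})_{t,t} = -\Theta_{t}$ and $(A^{t}_{\Theta})_{t, t+1} = 1$ are nonzero, so the sum telescopes to $1 - \Theta_{t}$ when $r \leq t$ and to $1$ when $r = t+1$. For $\ell \geq r$ with $\ell < t$, the entries $(A^{t}_{\Theta})_{\ell, j}$ vanish for $j < \ell$, so only the tail starting at $j = \ell$ contributes; this tail equals $-\Theta_\ell$ plus the sum from $j = \ell + 1$ to $t+1$, and invoking the main case with $r = \ell + 1$ (so $p = 0$) shows this latter sum equals $\Theta_\ell$, giving a total of $0$.

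No real obstacle is expected: the argument is a bookkeeping exercise in binomial coefficient manipulations. The delicate point is maintaining consistent index arithmetic across the two alternating sums, particularly in the re-indexing of the sum involving $\binom{m}{s+1}$.
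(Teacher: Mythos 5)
Your proof is correct and follows essentially the same route as the paper's: a direct case-by-case computation from the definition $(B^t_\Theta)_{\ell,r} = \sum_{j=r}^{t+1}(A^t_\Theta)_{\ell,j}$ (you correctly read the summation as running to $t+1$, fixing the off-by-one in the displayed definition), reducing each case to the standard alternating binomial sum identities $\sum_{s=0}^{m}(-1)^s\binom{m}{s}=0$ and its partial-sum form. The only minor organizational difference is that you dispatch the lower-triangular case by citing the already-proved $(\ell,\ell+1)$ entry rather than re-running the binomial collapse; this is a harmless shortcut that makes the bookkeeping slightly cleaner.
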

    \begin{proof}
    We will use the following binomial identities (see e.g.~\cite{graham+etal94}):
    \begin{align}
        \sum_{j=0}^{t} (-1)^{j} \binom{t}{j} &= 0, \label{full_dimensional:bin_identity1}\\
        \sum_{j=r}^{t} (-1)^{j} \binom{t}{j} &= (-1)^{r} \binom{t-1}{r-1}. \label{full_dimensional:bin_identity2}
    \end{align}

    \paragraph{Row $\ell = t$ and column $r = t+1$.}
    By definition $(B^{t}_{\Theta})_{\cdot,t+1} = (A^{t}_{\Theta})_{\cdot,t+1}$, and this agrees with formula~\eqref{full_dimensional:definition_B-correct} since $(A^{t}_{\Theta})_{\ell,t+1} = (-1)^{\ell+t}$ for $\ell \in \{0,\dots,t\}$. Next, we check the entries of row $\ell = t$ of $B^{t}_{\Theta}$ for columns $r \in \{0,\dots, t\}$. Since the only nonzeros in row $\ell = t$ of $A^{t}_{\Theta}$ are $-\Theta_{t}$ and $(-1)^{t+t}$, at entries $(t,t)$ and $(t,t+1)$, respectively, it holds that $(B^{t}_{\Theta})_{t,r} = 1 - \Theta_{t}$ for $r \in \{1,\dots,t\}$.

    \paragraph{Lower triangular part of $B^{t}_{\Theta}$.} We check $(B^{t}_{\Theta})_{\ell,r} = 0$ for $1 \leq r \leq \ell < t$. Indeed, 
    \begin{align*} 
        (B^{t}_{\Theta})_{\ell,r} &= - \Theta_{\ell} + \sum_{j=\ell+1}^{t+1} (-1)^{\ell + j - 1} \left[ \binom{t-\ell}{j-\ell-1} + \binom{t-\ell}{j-\ell} \Theta_{\ell} \right] \\
        &= - \Theta_{\ell} + \sum_{j=\ell+1}^{t+1} (-1)^{\ell + j - 1} \binom{t-\ell}{j-\ell-1} + \sum_{j=\ell+1}^{t} (-1)^{\ell + j - 1} \binom{t-\ell}{j-\ell} \Theta_{\ell}  \\
        &= - \Theta_{\ell} + \sum_{j=0}^{t-\ell} (-1)^{2\ell + j} \binom{t-\ell}{j} + \sum_{j=1}^{t-\ell} (-1)^{2\ell + j - 1} \binom{t-\ell}{j} \Theta_{\ell}  \\
        &= - \Theta_{\ell} + (-1)^{2\ell} \sum_{j=0}^{t-\ell} (-1)^{j} \binom{t-\ell}{j} + (-1)^{2\ell -1 }\sum_{j=1}^{t-\ell} (-1)^{j} \binom{t-\ell}{j} \Theta_{\ell}  \\
        &= - \Theta_{\ell} + (-1) \left(- \Theta_{\ell} + \sum_{j=0}^{t-\ell} (-1)^{j} \binom{t-\ell}{j} \Theta_{\ell}\right) = 0,
     \end{align*}
    where in the last two equalities we used the identity~\eqref{full_dimensional:bin_identity1}.

    \paragraph{Diagonal and upper triangular part of $B^{t}_{\Theta}$.} Let $0 \leq \ell < r \leq t$. Using~\eqref{full_dimensional:bin_identity2}, we have:
    \begin{align*}
        (B^{t}_{\Theta})_{\ell,r} &= \sum_{j=r}^{t+1} (-1)^{\ell + j-1} \left[ \binom{t-\ell}{j-\ell-1} + \binom{t-\ell}{j-\ell} \Theta_{\ell} \right] \\
        &= (-1)^{2\ell} \sum_{j=r-\ell-1}^{t-\ell} (-1)^{j}\binom{t-\ell}{j} + (-1)^{2\ell-1} \sum_{j=r-\ell}^{t-\ell} (-1)^{j} \binom{t-\ell}{j} \Theta_{\ell} \\
        &= (-1)^{2\ell} (-1)^{r-\ell-1} \binom{t-\ell-1}{r-\ell-2} + (-1)^{2\ell -1 } (-1)^{r-\ell} \binom{t-\ell-1}{r-\ell-1} \Theta_{\ell} \\
        &= (-1)^{\ell+r-1} \left[ \binom{t-\ell-1}{r-\ell-2} + \binom{t-\ell-1}{r-\ell-1} \Theta_{\ell} \right].
    \end{align*}
    \end{proof}
\noindent Notice, in particular, that $(B^{t}_{\Theta})_{\ell,\ell+1} = (-1)^{2\ell} \left[ \binom{t-\ell-1}{-1} + \binom{t-\ell-1}{0} \Theta_{\ell} \right] = \Theta_{\ell}$.

\begin{proof}[Proof of Lemma~\ref{lem:decompose-detA}]    It suffices to prove~\eqref{linearsystem:determinant_decomposition} for $B^{t}_{\Theta}$. To this end, we expand the determinant along the first column of $B^{t}_{\Theta}$. This column has exactly two nonzeros, $\Theta_{0}$ and $1-\Theta_{t}$ in row $0$ and $t$ respectively, so we obtain
    \begin{equation*}
        \det B^{t}_{\Theta} = \Theta_{0} \det B^{t-1}_{\Theta_{1},\dots,\Theta_{t}} + (-1)^{t} (1-\Theta_{t}) \det C,
    \end{equation*}where $C$ is the matrix obtained by removing the first column and the last row of $B^t_\Theta$. We claim that $\det C= (-1)^{t} \det A^{t-1}_{\Theta_{0},\dots,\Theta_{t-1}}$. Indeed, observe that $C$ is the $t \times t$ matrix obtained by removing the first column and the last row of $B^{t}_\Theta$, and every nonzero entry in this submatrix has the opposite sign of the corresponding entries in $A^{t-1}_{\Theta_{0},\dots,\Theta_{t-1}}$.
    Since $\det A^{t-1}_{\Theta_{0},\dots,\Theta_{t-1}} = \det B^{t-1}_{\Theta_{0},\dots,\Theta_{t-1}}$, by the multilinearity of the determinant, we can conclude that \begin{equation*}
        \det B^{t}_{\Theta} = \Theta_{0} \det B^{t-1}_{\Theta_{1},\dots,\Theta_{t}} + (1-\Theta_{t}) \det B^{t-1}_{\Theta_{0},\dots,\Theta_{t-1}}.
    \end{equation*}
    \end{proof}

For $t \in \N$ and a vector $\Theta=(\Theta_0,\dots,\Theta_t) \in (0,1)^{t+1}$, we define
\begin{equation}\label{linearsystem:Dk_definition}
    \omega^{t}_{\Theta} := \sum_{r=0}^{t} \binom{t}{r} \prod_{j=0}^{r-1} \Theta_{j} \prod_{j=r+1}^{t}(1-\Theta_{j}),
\end{equation}
and observe that $\omega_\Theta^t$ is strictly positive. Equivalently, we write  $\omega^{t}_{\Theta}= \omega^{t}_{\Theta_{0},\dots,\Theta_{t}}$. 

\begin{lemma}\label{linearsystem:determinant_theorem}
    Let $t \in \N$ and $\Theta=(\Theta_0,\dots,\Theta_t) \in (0,1)^{t+1}$. The determinant of $A^{t}_{\Theta}$ is equal to $\omega^{t}_{\Theta}$. 
    \end{lemma}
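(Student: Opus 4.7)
The plan is to prove the identity by induction on $t$, exploiting the recurrence already established in Lemma~\ref{lem:decompose-detA}, namely $\det A^{t}_{\Theta} = \Theta_{0} \det A^{t-1}_{\Theta_{1},\dots,\Theta_{t}} + (1-\Theta_{t}) \det A^{t-1}_{\Theta_{0},\dots,\Theta_{t-1}}.$ Since this recursion reduces the problem in $t$ to two problems in $t-1$, it suffices to check that $\omega^{t}_{\Theta}$ satisfies the same recurrence and the same base case.

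For the base case $t=0$, the matrix $A^{0}_{\Theta}$ is $1\times 1$, with its single entry being $(-1)^{0}[\binom{0}{0}+\binom{0}{1}\Theta_0]=1$, while $\omega^{0}_{\Theta}=\binom{0}{0}\prod_{j=0}^{-1}\Theta_{j}\prod_{j=1}^{0}(1-\Theta_{j})=1$, using the empty-product conventions set up in Section~\ref{section:preliminaries}.

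For the inductive step, I would assume $\det A^{t-1}_{\Psi}=\omega^{t-1}_{\Psi}$ for every $\Psi\in(0,1)^{t}$, and compute
\begin{align*}
\Theta_{0}\,\omega^{t-1}_{\Theta_{1},\dots,\Theta_{t}}
&=\sum_{r=0}^{t-1}\binom{t-1}{r}\prod_{j=0}^{r}\Theta_{j}\prod_{j=r+2}^{t}(1-\Theta_{j})
=\sum_{r=1}^{t}\binom{t-1}{r-1}\prod_{j=0}^{r-1}\Theta_{j}\prod_{j=r+1}^{t}(1-\Theta_{j}),\\
(1-\Theta_{t})\,\omega^{t-1}_{\Theta_{0},\dots,\Theta_{t-1}}
&=\sum_{r=0}^{t-1}\binom{t-1}{r}\prod_{j=0}^{r-1}\Theta_{j}\prod_{j=r+1}^{t}(1-\Theta_{j}),
\end{align*}
where in the first line I reindexed with $r'=r+1$. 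Summing these two expressions, the coefficient of $\prod_{j=0}^{r-1}\Theta_{j}\prod_{j=r+1}^{t}(1-\Theta_{j})$ is $\binom{t-1}{r-1}+\binom{t-1}{r}$ for $1\le r\le t-1$, and $1$ in the boundary cases $r=0$ and $r=t$. By Pascal's identity these coefficients equal $\binom{t}{r}$, so the sum equals $\omega^{t}_{\Theta}$, completing the induction.

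I do not expect any real obstacle here: the decomposition lemma does all the work, and what remains is just two straightforward reindexings and a single application of Pascal's rule. The only mild subtlety is the bookkeeping with the empty-product conventions at the extreme values $r=0$ and $r=t$, which I would handle by verifying those two terms separately before invoking Pascal's identity on the interior range $1\le r\le t-1$.
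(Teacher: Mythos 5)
Your proof is correct and follows essentially the same route as the paper's: induction on $t$, using the decomposition from Lemma~\ref{lem:decompose-detA} and Pascal's identity to recombine the two $\omega^{t-1}$ terms into $\omega^{t}_{\Theta}$. The only cosmetic difference is that you anchor the induction at $t=0$ (using the empty-product conventions) while the paper starts at $t=1$; both are fine, and your reindexing presentation of the inductive step is if anything a bit tidier.
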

\begin{proof}
We use the decomposition~\eqref{linearsystem:determinant_decomposition} and show the claim by induction on $t$. The case $t=1$ easily follows since
    \begin{align*}
        \omega^{1}_{\Theta_{0},\Theta_{1}} = (1-\Theta_{1}) + \Theta_{0} 
        = \det A^{1}_{\Theta_{0},\Theta_{1}}.
    \end{align*}
    Next, suppose that $\omega^{t-1}_{\Theta'} = \det A^{t-1}_{\Theta'}$ for $\Theta' \in (0,1)^{t}$ arbitrary. Using~\eqref{linearsystem:determinant_decomposition} and our induction hypothesis, we have
    
    \begin{align*}
        \det A^{t}_{\Theta} &=  \Theta_{0} \det A^{t-1}_{\Theta_{1},\dots,\Theta_{t}} + (1-\Theta_{t}) \det A^{t-1}_{\Theta_{0},\dots,\Theta_{t-1}} \\
        &= \Theta_{0} \omega^{t-1}_{\Theta_{1},\dots,\Theta_{t}} + (1-\Theta_{t}) \omega^{t-1}_{\Theta_{0},\dots,\Theta_{t-1}} \\
        &= \Theta_{0} \omega^{t-1}_{\Theta_{1},\dots,\Theta_{t}} + (1-\Theta_{t}) \left( \prod_{j=1}^{t-1} (1- \Theta_{j}) + \sum_{r=1}^{t-1} \binom{t-1}{r} \prod_{j=0}^{r-1} \Theta_{j} \prod_{j=r+1}^{t-1} (1-\Theta_{j}) \right) \\
        &= \Theta_{0} \omega^{t-1}_{\Theta_{1},\dots,\Theta_{t}} + \prod_{j=1}^{t} (1-\Theta_{j}) + \Theta_{0} \sum_{r=1}^{t-1} \binom{t-1}{r} \prod_{j=1}^{r-1} \Theta_{j} \prod_{j=r+1}^{t} (1-\Theta_{j}) \\
        &= \prod_{j=1}^{t} (1-\Theta_{j}) + \Theta_{0} \left( \sum_{r=1}^{t-1} \left[ \binom{t-1}{r-1} + \binom{t-1}{r} \right] \prod_{j=1}^{r-1} \Theta_{j} \prod_{j=r+1}^{t} (1-\Theta_{j}) + \binom{t-1}{t-1} \prod_{j=1}^{t-1} \Theta_{j} \right) \\
        (*) &= \prod_{j=1}^{t} (1-\Theta_{j}) + \Theta_{0} \left( \sum_{r=1}^{t} \binom{t}{r} \prod_{j=1}^{r-1} \Theta_{j} \prod_{j=r+1}^{t} (1-\Theta_{j}) \right) \\
        &= \sum_{r=0}^{t} \binom{t}{r} \prod_{j=0}^{r-1} \Theta_{j} \prod_{j=r+1}^{t} (1-\Theta_{j}) \\
        &= \omega^{t}_{\Theta},
    \end{align*}
    where in $(*)$ we used $\binom{t-1}{r-1} + \binom{t-1}{r} = \binom{t}{r}$. 
\end{proof}

   \paragraph{4.~Solution to the general system.} We next define a vector $(\overline \gamma, \overline \lambda) \in \R^{(k+1)+(k+1)}$ and show that satisfies all constraints from Lemma~\ref{cl:system-in-gamma}. 
   
   For $\ell \in \{0,\dots,k\}$, define
\begin{align}\label{linearsystem:lambda_solution}
    \overline{\lambda}_\ell := \frac{1}{\omega^{k}_{\Delta}} \prod_{i=0}^{\ell-1} \Delta_i \prod_{i=\ell+1}^k (1-\Delta_i).
\end{align}
For $i \in [k+1]$, we define
    \begin{align}
        \rho^{i}_{\Delta} := \begin{cases}
            \Delta_{i-1} \left( \frac{\omega^{k-i}_{\Delta_{i},\dots,\Delta_{k}}}{\omega^{k-(i-1)}_{\Delta_{i-1},\dots,\Delta_{k}}} \right) & \text{if $i \leq k$}, \\
            \Delta_{k} & \text{if $i =k+1$.}
        \end{cases}
    \end{align}
   These scalars are well-defined since $\omega^{k-i}_{\Delta_{i},\dots,\Delta_{k}} >0$. Then define $\overline{\gamma} \in \R^{k+1}$ as
    \begin{equation}\label{linearsystem:gamma_solution}
        \overline{\gamma}_{i} := \rho_{\Delta}^{i} \overline{\gamma}_{i-1}
    \end{equation}
    for $i \in [k+1]$ and $\overline{\gamma}_{0} := 1$.

\smallskip 

\paragraph{Constraints~\eqref{eq:lambda-gamma-nonnegative}.} $\overline{\lambda} \in (0,1)^{k+1}$ immediately from $\Delta \in (0,1)^{k+1}$ and the definition of $\overline \lambda$. $\overline \gamma_0=1$ follows by definition. Last, we prove that $\overline{\gamma} \in (0,1]^{k+1}$. For $i \in \{0,\dots,k\}$ we can rewrite $\overline{\gamma}_{i}$ as
    \begin{equation}\label{linearsystem:gamma_solution2}
        \overline{\gamma}_{i} = \left( \prod_{j=0}^{i-1} \Delta_{j} \right) \left( \frac{\omega^{k-i}_{\Delta_{i},\dots,\Delta_{k}}}{\omega^{k}_{\Delta_{0},\dots,\Delta_{k}}} \right).
    \end{equation}

    Using~\eqref{linearsystem:Dk_definition}, we write 
    \begin{align*}
        \omega^{k-(i-1)}_{\Delta_{i-1},\dots,\Delta_{k}} 
        &= \sum_{r=0}^{k-(i-1)} \binom{k-(i-1)}{r} \prod_{j=0}^{r-1} \Delta_{j+i-1} \prod_{j = r+1}^{k} (1- \Delta_{j+i-1}) \\
        &= \sum_{r=i-1}^{k} \binom{k-(i-1)}{r-(i-1)} \prod_{j=i-1}^{r-1} \Delta_{j} \prod_{j = r+1}^{k} (1- \Delta_{j}) \\
        &= \prod_{j=i}^{k}(1-\Delta_{j}) + \sum_{r=i}^{k} \binom{k-(i-1)}{r-(i-1)} \prod_{j=i-1}^{r-1} \Delta_{j} \prod_{j = r+1}^{k} (1- \Delta_{j}) \\
        &= \prod_{j=i}^{k}(1-\Delta_{j}) + \sum_{r=i}^{k} \left( \binom{k-i}{r-i} + \binom{k-i}{r-(i-1)} \right) \prod_{j=i-1}^{r-1} \Delta_{j} \prod_{j = r+1}^{k} (1- \Delta_{j}) \\
        &= \sum_{r=i}^{k} \binom{k-i}{r-i} \prod_{j=i-1}^{r-1} \Delta_{j} \prod_{j = r+1}^{k} (1- \Delta_{j}) + E(\Delta) \\
        &= \Delta_{i-1} \sum_{r=i}^{k} \binom{k-i}{r-i} \prod_{j=i}^{r-1} \Delta_{j} \prod_{j = r+1}^{k} (1- \Delta_{j}) + E(\Delta) \\
                &= \Delta_{i-1} \omega^{k-i}_{\Delta_{i},\dots,\Delta_{k}}  + E(\Delta),
    \end{align*}
    where $E(\Delta)$ is strictly positive. 
    Thus, $0 < \Delta_{i-1} \omega^{k-i}_{\Delta_{i},\dots,\Delta_{k}} < \omega^{k-(i-1)}_{\Delta_{i-1},\dots,\Delta_{k}}$ and therefore $\rho_{\Delta}^{i} \in (0,1)$. Consequently, in view of~\eqref{linearsystem:gamma_solution}, and $\overline \gamma_0=1$, we deduce that $\overline{\gamma} \in (0,1)^{k+1}$. 

\newpage

\paragraph{Constraints~\eqref{eq:z_in_M0-new}.} We first show the following.
    \begin{lemma}\label{linearsystem:determinant_decomposition2}
    For $i \in \{0,\dots,k\}$, the determinant of $A^{k-i}_{\Delta_{i},\dots,\Delta_{k}}$ can be written as 
    \begin{equation}\label{linearsystem:determinant_decomposition2:formula}
        \sum_{r=i}^{k} (-1)^{r+i} \left[ \binom{k-i}{r-i} + \binom{k-i}{r-i+1} \Delta_{i} \right] \prod_{j=i}^{r-1} \Delta_{j+1} \det A^{k-(r+1)}_{\Delta_{r+1},\dots,\Delta_{k}}.
    \end{equation}
    where $A^{k-(r+1)}_{\Delta_{r+1},\dots,\Delta_{k}} := 1$ if $r = k$.
    \end{lemma}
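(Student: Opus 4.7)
The plan is to proceed by Laplace (cofactor) expansion of $\det A^{k-i}_{\Delta_{i},\dots,\Delta_{k}}$ along the first row, namely the row with index $\ell = 0$. Writing $t = k-i$ and $\Theta_{\ell} = \Delta_{i+\ell}$ for brevity, I observe that in row $0$ every column index $r \in \{1,\dots,t+1\}$ satisfies $\ell < r$, so the first-row entries are given uniformly by the first branch of \eqref{linearsystem:A_definition-correct}, namely $(A^{t}_{\Theta})_{0,r} = (-1)^{r-1}\bigl[\binom{t}{r-1} + \binom{t}{r}\Delta_{i}\bigr]$. Thus the expansion along row $0$ takes the shape
\begin{equation*}
\det A^{t}_{\Theta} \;=\; \sum_{r=1}^{t+1}(-1)^{r+1}\,(A^{t}_{\Theta})_{0,r}\,\det M_{r},
\end{equation*}
where $M_{r}$ is the minor obtained by deleting row $\ell=0$ and column $r$.

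The crux of the argument is to identify $\det M_{r}$. The key observation is that $M_{r}$ admits a block upper-triangular decomposition: splitting rows at $\ell = r$ and columns at position $r$, the lower-left block is zero (row $\ell \geq r$ vanishes on any column $c \leq r-1 < \ell$), the upper-left block is an upper-triangular matrix of size $(r-1)\times(r-1)$ whose diagonal entries are $-\Theta_{1},\dots,-\Theta_{r-1}$, and the lower-right block is a $(t-r+1)\times(t-r+1)$ matrix which, by a direct comparison of the three cases of \eqref{linearsystem:A_definition-correct} under the reindexing $\ell'=\ell-r$, $c' = c-(r+1)$, turns out to coincide exactly with $A^{t-r}_{\Theta_{r},\dots,\Theta_{t}}$. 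Consequently
\begin{equation*}
\det M_{r} \;=\; (-1)^{r-1}\,\Theta_{1}\Theta_{2}\cdots\Theta_{r-1}\,\det A^{t-r}_{\Theta_{r},\dots,\Theta_{t}}.
\end{equation*}

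Plugging this back, the global sign is $(-1)^{r+1}(-1)^{r-1}(-1)^{r-1} = (-1)^{r+1}$, so
\begin{equation*}
\det A^{k-i}_{\Delta_{i},\dots,\Delta_{k}} = \sum_{r=1}^{k-i+1}(-1)^{r+1}\Bigl[\tbinom{k-i}{r-1}+\tbinom{k-i}{r}\Delta_{i}\Bigr]\prod_{\ell=1}^{r-1}\Delta_{i+\ell}\;\det A^{k-i-r}_{\Delta_{i+r},\dots,\Delta_{k}}.
\end{equation*}
Reindexing via $r' := r + i - 1$ (so $r'$ ranges over $\{i,\dots,k\}$) and using that $(-1)^{r+1}=(-1)^{r'+i}$, $\binom{k-i}{r-1}=\binom{k-i}{r'-i}$, $\prod_{\ell=1}^{r-1}\Delta_{i+\ell}=\prod_{j=i}^{r'-1}\Delta_{j+1}$, and $A^{k-i-r}_{\Delta_{i+r},\dots,\Delta_{k}} = A^{k-(r'+1)}_{\Delta_{r'+1},\dots,\Delta_{k}}$, yields formula~\eqref{linearsystem:determinant_decomposition2:formula}. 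The $r=k$ boundary term, where the bottom-right block is the $0\times 0$ empty matrix, is consistent with the stated convention $A^{k-(r+1)}_{\Delta_{r+1},\dots,\Delta_{k}}:=1$.

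The main technical step in this plan is verifying that the lower-right block of $M_{r}$ is literally $A^{t-r}_{\Theta_{r},\dots,\Theta_{t}}$, which requires a careful parity check: one has to confirm that under the translation $\ell \mapsto \ell - r$, $c \mapsto c - (r+1)$, the exponent $\ell + c - 1$ in the sign of \eqref{linearsystem:A_definition-correct} shifts by an even amount ($2r+1-1 = 2r$), and that the binomial coefficients and index shifts align so that each of the three cases (strictly upper, diagonal, strictly lower) is preserved. Once this is established, everything else is bookkeeping of signs and indices.
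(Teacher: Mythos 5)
Your proof is correct and takes a genuinely different route from the paper's. You expand $\det A^{k-i}_{\Delta_i,\dots,\Delta_k}$ along the first row (where every entry is nonzero), and then identify each minor $M_r$ via a block upper-triangular decomposition: the upper-left $(r-1)\times(r-1)$ block is upper triangular with diagonal $-\Theta_1,\dots,-\Theta_{r-1}$, and the lower-right block is literally $A^{t-r}_{\Theta_r,\dots,\Theta_t}$ after a coordinate shift. The paper instead expands along the \emph{first column} of $A^k_\Delta$, which has only two nonzero entries; this produces a two-term recursion involving auxiliary submatrices $\tilde A^{k-r}$ (obtained by deleting rows $1$ through $r$ and columns $1$ through $r$), whose first columns again have exactly two nonzeros, and the formula emerges by unwinding the recursion through repeated substitution. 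Your single-shot row expansion is more direct and avoids the auxiliary matrices entirely, at the price of a block-structure verification for each minor; the paper's recursive column expansion keeps each step to a two-term identity but requires managing the family $\tilde A^{k-r}$. Both are valid and of comparable length.

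One small bookkeeping slip: the column reindexing into $A^{t-r}_{\Theta_r,\dots,\Theta_t}$ should be $c' = c - r$ (so that $c'$ ranges over $\{1,\dots,t-r+1\}$, matching the paper's column-index convention for $A^{t-r}$), not $c' = c-(r+1)$ as written. With $c' = c-r$ the exponent $\ell+c-1$ shifts by $r + r = 2r$, which is even, so the parity is indeed preserved; your stated conclusion is right, only the intermediate shift is off by one. Similarly the remark ``$2r+1-1 = 2r$'' should just read ``$2r$''. None of this affects the correctness of the final computation.
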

    \begin{proof}
        Since $A^{k-i}_{\Delta_{i},\dots,\Delta_{k}}$ is the $(k+1-i) \times (k+1-i)$ principal submatrix of $A^{k}_{\Delta}$ indexed by rows $\{i,\dots,k\}$ and columns $\{i+1,\dots,k+1\}$, it suffices, without loss of generality, to consider the case $i=0$. Let $r \in \{1,\dots,k\}$ and let $\tilde{A}^{k-r}$ be the $(k + 1 - r) \times (k + 1 -r)$ submatrix of $A^{k}_{\Delta}$ obtained by deleting rows $1$ through $r$ and columns $1$ through $r$ (recall that $\{1,\dots,k+1\}$ is the set of column indices of $A^{k}_{\Delta}$ and $\{0,\dots,k\}$ is the set of row indices of $A^k_{\Delta}$). We claim that the matrix $\tilde{A}^{k-r}$ has only two nonzero entries in its first column (as does $A^{k}_{\Delta}$). Indeed, first, observe that $A^{k}_{\Delta}$ has the nonzero subdiagonal $(-\Delta_{1},\dots,-\Delta_{k})$ lying below its main diagonal. Moroeover, after deleting columns $1$ through $r$, the first column of the resulting submatrix contains $r+2$ consecutive nonzeros, followed by zeros. Removing rows $1$ through $r$ then leaves exactly two nonzero entries in the first column of $\tilde{A}^{k-r}$ (see Figure ~\ref{fig:tildeA}).
        \begin{figure}[H]
        \centering
        \[
        \left(
        \begin{array}{cc|ccc}
        \times & \times & \pmb{\times} & \pmb{\times} & \pmb{\times} \\ \hline
        \times & \times & \times & \times & \times \\
        0      & \times & \times & \times & \times \\ \hline
        0      & 0      & \pmb{\times} & \pmb{\times} & \pmb{\times} \\
        0      & 0      & \pmb{0}      & \pmb{\times} & \pmb{\times}
        \end{array}
        \right)
        \]
        \caption{Schematic representation of $A^{4}_{\Delta}$, 
        with the entries in bold corresponding to the surviving $\tilde{A}^{2}$ submatrix after deleting rows 1-2 and columns 1-2.}
        \label{fig:tildeA}
        \end{figure}
        Therefore, the Laplace expansion along the first column of $A^{k}_{\Delta}$ and $\tilde{A}^{k-r}$ yields:
        \begin{align}
            \det A^{k}_{\Delta} &= \left[ \binom{k}{0}+ \binom{k}{1} \Delta_{0} \right] \det A^{k-1}_{\Delta_{1},\dots,\Delta_{k}} + \Delta_{1} \det \tilde{A}^{k-1} \label{linearsystem:determinant_decomposition2:formula:detA}\\
            \det \tilde{A}^{k-r} &= (-1)^{r} \left[ \binom{k}{r} + \binom{k}{r+1} \Delta_{0} \right] \det A^{k-(r+1)}_{\Delta_{r+1},\dots,\Delta_{k}} \nonumber\\
            &+ \Delta_{r+1} \det \tilde{A}^{k-(r+1)}. \label{linearsystem:determinant_decomposition2:formula:det_tildeA}
        \end{align}
        By repeated substitution of $\det \tilde{A}^{k-r}$ in~\eqref{linearsystem:determinant_decomposition2:formula:detA}, the claimed formula follows.
    \end{proof}

        Let $i \in \{0,\dots,k\}$. Equate the formula for the determinant of $A^{k-i}_{\Delta_{i},\dots,\Delta_{k}}$ from  Lemma~\ref{linearsystem:determinant_theorem} with that from Lemma~\ref{linearsystem:determinant_decomposition2}. Then multiply both sides by $\prod_{j=0}^{i}\Delta_{i} / \omega^{k}_{\Delta}$ to  obtain
        \begin{align*}
            & \frac{\prod_{j=0}^{i}\Delta_{i}}{\omega^{k}_{\Delta}} \omega^{k-i}_{\Delta_{i},\dots,\Delta_{k}} = \\  & \frac{\prod_{j=0}^{i}\Delta_{i}}{\omega^{k}_{\Delta}} \sum_{r=i}^{k} (-1)^{r+i} \left[ \binom{k-i}{r-i} + \binom{k-i}{r-i+1} \Delta_{i} \right] \prod_{j=i}^{r-1} \Delta_{j+1} \omega^{k-(r+1)}_{\Delta_{r+1},\dots,\Delta_{k}}.
        \end{align*}
    By applying~\eqref{linearsystem:gamma_solution2} to both sides, we deduce,
        \begin{equation*}
            \Delta_{i} \overline{\gamma}_{i} = \sum_{r=i}^{k} (-1)^{r+i} \left[ \binom{k-i}{r-i} + \binom{k-i}{r-i+1} \Delta_{i} \right] \overline{\gamma}_{r+1},
        \end{equation*}
thus showing that the $i$-th constraint from~\eqref{eq:z_in_M0-new} is satisfied. 

\smallskip

    \paragraph{Constraints~\eqref{eq:convex_combination_singletons:fractional-new}.} We have \begin{align}
    & \omega^{k}_{\Delta} \sum_{r=0}^{k} \binom{k}{r} \overline \lambda_r \Delta_{r} \label{eq:start} \\
    & = \sum_{r=0}^{k} \binom{k}{r} \prod_{j=0}^{r} \Delta_{j} \prod_{j=r+1}^{k} (1-\Delta_{j}) \nonumber \\
      &= \Delta_{0} \sum_{r=0}^{k} \binom{k}{r} \prod_{j=0}^{r-1} \Delta_{j+1} \prod_{j=r+1}^{k} (1-\Delta_{j}) \nonumber \\ 
      & =\Delta_{0} \sum_{r=1}^{k} \binom{k}{r} \prod_{j=0}^{r-1} \Delta_{j+1} \prod_{j=r+1}^{k} (1-\Delta_{j}) + \Delta_{0} \binom{k}{0} \prod_{j=1}^{k} (1 -\Delta_{j}) + \Delta_{0} \binom{k}{k} \prod_{j=0}^{k-1} \Delta_{j+1} \nonumber \\ 
         & = \Delta_{0} \sum_{r=1}^{k-1} \left[ \binom{k-1}{r-1} + \binom{k-1}{r} \right] \prod_{j=0}^{r-1} \Delta_{j+1} \prod_{j=r+1}^{k} (1 - \Delta_{j}) + \Delta_{0} \binom{k}{0} \prod_{j=1}^{k} (1 -\Delta_{j}) \nonumber \\ 
        &\qquad + \Delta_{0} \binom{k}{k} \prod_{j=0}^{k-1} \Delta_{j+1} \nonumber\\
        &= \Delta_{0} \sum_{r=1}^{k-1} \binom{k-1}{r-1} \prod_{j=0}^{r-1} \Delta_{j+1} \prod_{j=r+1}^{k} (1 - \Delta_{j}) \label{full_dimensional:last2claims_1:1}\\
        &\qquad + \Delta_{0} \sum_{r=1}^{k-1} \binom{k-1}{r}  \prod_{j=0}^{r-1} \Delta_{j+1} \prod_{j=r+1}^{k} (1 - \Delta_{j}) \label{full_dimensional:last2claims_1:2}\\
        &\qquad + \Delta_{0} \binom{k-1}{0} \prod_{j=2}^{k}(1-\Delta_{j}) - \Delta_{0}\Delta_{1} \binom{k-1}{0} \prod_{j=2}^{k} (1-\Delta_{j}) \label{full_dimensional:last2claims_1:3}\\
        &\qquad + \Delta_{0} \binom{k-1}{k-1} \prod_{j=0}^{k-1} \Delta_{j+1}. \label{full_dimensional:last2claims_1:4}
    \end{align}
  
        \eqref{full_dimensional:last2claims_1:2} can be expanded as
    \begin{align}
        \Delta_{0} \sum_{r=1}^{k-1} &\left( \binom{k-1}{r} \prod_{j=0}^{r-1} \Delta_{j+1} \prod_{j=r+2}^{k} (1-\Delta_{j}) \right. \nonumber \\ 
        &\left.- \Delta_{r+1} \binom{k-1}{r} \prod_{j=0}^{r-1} \Delta_{j+1} \prod_{j=r+2}^{k}(1-\Delta_{j}) \right). \label{full_dimensional:last2claims_2}
    \end{align}

\noindent Then~\eqref{eq:start} equals~\eqref{full_dimensional:last2claims_1:3}$+$~\eqref{full_dimensional:last2claims_1:1}$+$~\eqref{full_dimensional:last2claims_2}$+$~\eqref{full_dimensional:last2claims_1:4}, which can be written as
{\small{
\begin{align}
        &\Delta_{0} \left( \underbrace{\binom{k-1}{0} \prod_{j=2}^{k}(1-\Delta_{j}) - \Delta_{1} \binom{k-1}{0} \prod_{j=2}^{k} (1-\Delta_{j})}_{\eqref{full_dimensional:last2claims_1:3}/\Delta_0} + \underbrace{\sum_{r=1}^{k-1} \binom{k-1}{r-1} \prod_{j=0}^{r-1} \Delta_{j+1} \prod_{j=r+1}^{k}(1-\Delta_{j})}_{\eqref{full_dimensional:last2claims_1:1}/\Delta_0} \right. \nonumber \\ & \left.+ \underbrace{\sum_{r=1}^{k-1} \binom{k-1}{r} \prod_{j=0}^{r-1} \Delta_{j+1} \prod_{j=r+2}^{k} (1-\Delta_{j})   -\sum_{r=1}^{k-1}  \Delta_{r+1} \binom{k-1}{r} \prod_{j=0}^{r-1} \Delta_{j+1} \prod_{j=r+2}^{k} (1-\Delta_{j})}_{\eqref{full_dimensional:last2claims_2}/\Delta_0}
      + \underbrace{\binom{k-1}{k-1} \prod_{j=0}^{k-1} \Delta_{j+1}}_{\eqref{full_dimensional:last2claims_1:4}/\Delta_0} \right) \nonumber \\
        = &\Delta_{0} \left( \sum_{r=0}^{k-1} \binom{k-1}{r} \prod_{j=0}^{r-1} \Delta_{j+1} \prod_{j=r+2}^{k} (1-\Delta_{j}) + \sum_{r=1}^{k} \binom{k-1}{r-1} \prod_{j=0}^{r-1}\Delta_{j+1} \prod_{j=r+1}^{k} (1-\Delta_{j}) \right. \nonumber \\
        &\quad\left. - \sum_{r=0}^{k-1} \binom{k-1}{r} \Delta_{r+1} \prod_{j=0}^{r-1} \Delta_{j+1} \prod_{j=r+2}^{k}(1-\Delta_{j}) \right),\label{eq:quasiquasi}
\end{align}
}}
 where in the last equation we used $$\binom{k-1}{0} \prod_{j=2}^{k}(1-\Delta_{j})+\sum_{r=1}^{k-1} \binom{k-1}{r} \prod_{j=0}^{r-1} \Delta_{j+1} \prod_{j=r+2}^{k} (1-\Delta_{j})=\sum_{r=0}^{k-1} \binom{k-1}{r} \prod_{j=0}^{r-1} \Delta_{j+1} \prod_{j=r+2}^{k} (1-\Delta_{j})$$
 and 
$$
\sum_{r=1}^{k-1} \binom{k-1}{r-1} \prod_{j=0}^{r-1} \Delta_{j+1} \prod_{j=r+1}^{k}(1-\Delta_{j}) + \binom{k-1}{k-1} \prod_{j=0}^{k-1} \Delta_{j+1} = \sum_{r=1}^{k} \binom{k-1}{r-1} \prod_{j=0}^{r-1}\Delta_{j+1} \prod_{j=r+1}^{k} (1-\Delta_{j}),
$$
 and 
 \begin{align*}
 - \Delta_{1} \binom{k-1}{0} \prod_{j=2}^{k} (1-\Delta_{j}) -\sum_{r=1}^{k-1} \Delta_{r+1} \binom{k-1}{r} \prod_{j=0}^{r-1} \Delta_{j+1} \prod_{j=r+2}^{k} (1-\Delta_{j})  \\ = - \sum_{r=0}^{k-1} \binom{k-1}{r} \Delta_{r+1} \prod_{j=0}^{r-1} \Delta_{j+1} \prod_{j=r+2}^{k}(1-\Delta_{j}).
\end{align*}
    By doing a change of index variables $r' = r-1$ in the second summand from~\eqref{eq:quasiquasi}, we have that the second and third summands cancel out. Therefore~\eqref{eq:start} equals to
    \begin{align*}
        &\Delta_{0} \left( \sum_{r=0}^{k-1} \binom{k-1}{r} \prod_{j=0}^{r-1} \Delta_{j+1} \prod_{j=r+2}^{k} (1-\Delta_{j}) \right)\\
        &=  
        \Delta_{0} \left( \sum_{r'=1}^{k} \binom{k-1}{r'-1} \prod_{j=0}^{r'-2} \Delta_{j+1} \prod_{j=r'+1}^{k} (1-\Delta_{j}) \right) \\
        &= \Delta_{0} \left( \sum_{r'=1}^{k} \binom{k-1}{r'-1} \prod_{j'=1}^{r'-1} \Delta_{j'} \prod_{j=r'+1}^{k} (1-\Delta_{j}) \right) \\
        &= \Delta_{0} \left( \sum_{r'=0}^{k-1} \binom{k-1}{r'} \prod_{j'=1}^{r'} \Delta_{j'} \prod_{j=r'+2}^{k} (1-\Delta_{j}) \right) \\
        &= \Delta_{0} \omega_{\Delta_{1},\dots,\Delta_{k}}^{k-1}=  \overline \gamma_1 \omega^{k}_{\Delta} ,        
    \end{align*}
where last equation follows by definition~\eqref{linearsystem:gamma_solution2} of $\overline \gamma$.

\smallskip

    \paragraph{Constraints~\eqref{eq:convex_combination_emptyset-new},~\eqref{eq:convex_combination_singletons:integer-new}, and~\eqref{eq:ugly-inequality}.} These constraints can be written compactly as follows:
    \begin{equation}\label{eq:lambda_Qgamma}
        \begin{pmatrix}
            \lambda_{0} \\
            \lambda_{1} \\
            \vdots \\
            \lambda_{k}
        \end{pmatrix} = Q^{k} 
        \begin{pmatrix}
            \gamma_{0} \\
            \gamma_{1} \\
            \vdots \\
            \gamma_{k} 
        \end{pmatrix}
    \end{equation}
    where $Q^{k}$ is the $(k+1) \times (k+1)$ \emph{inverse Pascal upper-triangular matrix} defined as 
    \begin{equation}
    Q_{\ell,r}^{k} := 
    \begin{cases}
        (-1)^{\ell+r} \binom{k-\ell}{r-\ell} & \mbox{if $\ell \leq r$} \\
        0 & \mbox{o.w.}
        \end{cases}
    \end{equation}
    for $\ell,r \in \{0,\dots,k\}$. Indeed, row $\ell$ at the RHS of~\eqref{eq:lambda_Qgamma} corresponds to
    \begin{align*}
        \sum_{r = 0}^{k} Q^{k}_{\ell,r} \gamma_{r} &= \sum_{r = 0}^{k} (-1)^{\ell + r} \binom{k-\ell}{r-\ell} \gamma_{r} \\
        &= \sum_{r = \ell}^{k-\ell} (-1)^{r} \binom{k-\ell}{r} \gamma_{\ell +r} \\
        &= \lambda_{\ell}.
    \end{align*}
    where the last equality follows by~\eqref{eq:ugly-inequality}. We make use of the following lemma.
    \begin{proposition}\label{proposition:Pascal_matrix}
        The matrix $Q^{k}$ is invertible and its inverse is given by the matrix $P^{k}$ defined as
    \begin{equation}
    P_{\ell,r}^{k} =
    \begin{cases}
        \binom{k-\ell}{r-\ell} & \mbox{if $\ell \leq r$} \\
        0 & \mbox{o.w.}
    \end{cases}
    \end{equation}
    for $\ell,r \in \{0, \dots, k\}$. 
    \end{proposition}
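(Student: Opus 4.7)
The plan is to verify directly that $Q^k P^k = I$ by computing the entries of the product. Since both matrices are upper triangular, the product is upper triangular, so it suffices to compute $(Q^k P^k)_{\ell,j}$ for $\ell \leq j$ and show that it equals $\delta_{\ell,j}$. By the definitions of $Q^k$ and $P^k$, we have
\begin{equation*}
(Q^k P^k)_{\ell,j} = \sum_{r=\ell}^{j} (-1)^{\ell+r} \binom{k-\ell}{r-\ell} \binom{k-r}{j-r}.
\end{equation*}
After the change of variable $s = r - \ell$, this becomes $\sum_{s=0}^{j-\ell} (-1)^{s} \binom{k-\ell}{s} \binom{k-\ell-s}{j-\ell-s}$.

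The key step is to apply the \emph{trinomial revision} (or subset-of-a-subset) identity
\begin{equation*}
\binom{k-\ell}{s} \binom{k-\ell-s}{j-\ell-s} = \binom{k-\ell}{j-\ell} \binom{j-\ell}{s},
\end{equation*}
which is immediate from writing both sides as a ratio of factorials. Factoring $\binom{k-\ell}{j-\ell}$ out of the sum then reduces the computation to $\binom{k-\ell}{j-\ell} \sum_{s=0}^{j-\ell} (-1)^s \binom{j-\ell}{s}$, and the remaining alternating binomial sum equals $(1-1)^{j-\ell}$ by the binomial theorem.

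Finally I would conclude by case analysis: if $\ell = j$ the empty-exponent case gives $(1-1)^0 = 1$ and $\binom{k-\ell}{0} = 1$, so the diagonal entries are $1$; if $\ell < j$ the sum vanishes, so off-diagonal entries are $0$. This establishes $Q^k P^k = I$, which in particular shows $Q^k$ is invertible with inverse $P^k$. I do not expect any real obstacle here; the only subtlety is choosing to identify the combinatorial identity at the right step, since a naive expansion of the alternating sum is unwieldy.
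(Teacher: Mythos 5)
Your proof is correct and takes essentially the same approach as the paper: both reduce to the subset-of-a-subset (trinomial revision) identity $\binom{a}{b}\binom{b}{c}=\binom{a}{c}\binom{a-c}{b-c}$ followed by the alternating binomial sum $\sum_{s}(-1)^s\binom{m}{s}=(1-1)^m$. The only cosmetic difference is that you verify $Q^kP^k=I$ while the paper verifies $P^kQ^k=I$, which is immaterial for square matrices.
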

    \begin{proof}
    This fact is implicit in the literature, but we give a proof here for completeness. Let $\ell,r \in \{0,\dots,k\}$. We show that
    \begin{equation*}
    (P^{k} Q^{k})_{\ell,r} = 
    \begin{cases}
    1 & \mbox{if $\ell = r$,} \\
    0 & \mbox{if $\ell \neq r$.}
    \end{cases}
    \end{equation*}
First, notice that the $(\ell,r)$ entry of $P^{k}Q^{k}$ is given by
\begin{align*}\label{proposition:proof_Pascal_matrix:eq}
    \sum_{j=0}^{k} P^{k}_{\ell,j} Q^{k}_{j,r} &= \sum_{j=0}^{k}  \binom{k-\ell}{j-\ell} (-1)^{j+r}\binom{k-j}{r-j}   \\
    &= (-1)^{r} \sum_{j=\ell}^{r}  (-1)^{j} \binom{k-\ell}{j-\ell} \binom{k-j}{r-j} \\
    &= (-1)^{r} \sum_{j=0}^{r-\ell} (-1)^{j +\ell} \binom{k-\ell}{j} \binom{(k-\ell)-j}{(r-\ell) - j} \\
    &= (-1)^{r + \ell} \sum_{j=0}^{r-\ell}(-1)^{j} \binom{k-\ell}{r-\ell} \binom{r-\ell}{j} \\
    &= (-1)^{r+\ell} \binom{k-\ell}{r-\ell} \sum_{j=0}^{r-\ell} \binom{r-\ell}{j}
\end{align*}
where the next to last equality follows by the binomial identity $\binom{a}{b} \binom{b}{c} = \binom{a}{c} \binom{a-c}{b-c}$ for integers $a,b,c$, see e.g.~\cite{graham+etal94}. If $\ell = r$, then we have that $(P^{k} Q^{k})_{\ell,r}$ equals $(-1)^{\ell + \ell} \binom{k-\ell}{0} \binom{0}{0} = 1$. On the other hand, if $\ell < r$, then the identity~\eqref{full_dimensional:bin_identity2} implies that $(P^{k} Q^{k})_{\ell,r}$ equals $0$, and if $\ell > r$, the same holds since $\binom{k-\ell}{r-\ell} = 0$.
    \end{proof}
    \noindent The latter proposition and~\eqref{eq:lambda_Qgamma} imply that there is a biunivocal relation between $(\lambda_{0},\dots,\lambda_{k})$ and $(\gamma_{0},\dots,\gamma_{k})$. Therefore, the following system is equivalent to~\eqref{eq:lambda_Qgamma}:
    \begin{equation}\label{eq:gamma_Plambda}
        \begin{pmatrix}
            \gamma_{0} \\
            \gamma_{1} \\
            \vdots \\
            \gamma_{k} 
        \end{pmatrix}
        = P^{k} 
        \begin{pmatrix}
            \lambda_{0} \\
            \lambda_{1} \\
            \vdots \\
            \lambda_{k}
        \end{pmatrix} 
    \end{equation}
    \noindent Next, we verify that $(\overline\gamma_{0},\overline\gamma_{1},\dots,\overline\gamma_{k})$ and $(\overline{\lambda}_{0},\overline{\lambda}_{1},\dots,\overline{\lambda}_{k})$ defined as in~\eqref{linearsystem:gamma_solution2}  and~\eqref{linearsystem:lambda_solution} satisfy~\eqref{eq:gamma_Plambda}. Indeed, for row $\ell \in \{0,\dots,k\}$ we have
    \begin{align*}
        &\overline\gamma_{\ell} = \sum_{r=0}^{k} P^{k}_{\ell,r} \overline\lambda_{r} = \sum_{r=\ell}^{k} \binom{k-\ell}{r-\ell} \overline\lambda_{r} \\
        \iff &\left( \prod_{j = 0}^{\ell-1} \Delta_{j} \right) \frac{\omega^{k-\ell}_{\Delta_{\ell},\dots,\Delta_{k}}}{\omega^{k}_{\Delta}} = \sum_{r=\ell}^{k}  \binom{k-\ell}{r-\ell} \frac{1}{\omega^{k}_{\Delta}} \prod_{j = 0}^{r-1} \Delta_{j} \prod_{j = r+1}^{k} (1 - \Delta_{j})  \\
        \iff &\left( \prod_{j = 0}^{\ell-1} \Delta_{j} \right) \frac{\omega^{k-\ell}_{\Delta_{\ell},\dots,\Delta_{k}}}{\omega^{k}_{\Delta}} = \left(\frac{\prod_{j=0}^{\ell-1} \Delta_{j}}{\omega^{k}_{\Delta}} \right) \sum_{r=\ell}^{k}  \binom{k-\ell}{r-\ell} \prod_{j = \ell}^{r-1} \Delta_{j} \prod_{j = r+1}^{k} (1 - \Delta_{j})  \\
        \iff &\omega^{k-\ell}_{\Delta_{\ell},\dots,\Delta_{k}} = \sum_{r=\ell}^{k}  \binom{k-\ell}{r-\ell} \prod_{j = \ell}^{r-1} \Delta_{j} \prod_{j = r+1}^{k} (1 - \Delta_{j})
    \end{align*}
    where the last statement follows by Definition~\eqref{linearsystem:Dk_definition}. In consequence, \eqref{eq:gamma_Plambda} is satisfied by $(\overline\gamma_{0},\overline\gamma_{1},\dots,\overline{\gamma}_{k})$ and $(\overline{\lambda}_{0},\overline{\lambda}_{1},\dots,\overline{\lambda}_{k})$. This  verifies~\eqref{eq:convex_combination_emptyset-new} and ~\eqref{eq:convex_combination_singletons:integer-new} by looking at rows $\ell = 0$ and $\ell = 1$ in~\eqref{eq:gamma_Plambda}, respectively. At last,~\eqref{eq:ugly-inequality} is verified by \eqref{eq:lambda_Qgamma}.

\subsection{\texorpdfstring{Proofs from \ref{mainthm-d} $\Rightarrow$ \ref{mainthm-a}}{Proofs from \ref{mainthm-d} ⇒ \ref{mainthm-a}}}
\label{app:(D)=>(A)}

We start with an auxiliary lemma. 

\begin{lemma}\label{lemma:D_implies_A_1}
Let $j \in [k]$ and $i \in [n]$. Then 
{\small{
\begin{equation}\label{lemma:integrality-preserving}
    \LS_0^{j}(P) \cap \{ x \in \R^{n} \, : \, x_{i} \in \{0,1\} \} = \LS_0 \left( \LS_0^{j-1}(P) \cap 
    \{ x \in \R^{n} \, : \, x_{i} \in \{0,1\} \}
    \right).
\end{equation}
}}Moreover, if $x \in \LS_0^{j}(P)$ and $x_{i} \in (0,1)$, then there exists $x^{i,1},x^{i,0} \in \LS^{j-1}_{0}(P)$ such that $x^{i,1}_{i} = 1$ and $x^{i,0} = 0$. 
\end{lemma}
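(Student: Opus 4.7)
My plan is to derive both assertions from the characterization $\LS_0(R) = \bigcap_{\ell \in [n]} P_\ell(R)$ of the Lift-and-Project closure as an intersection of Balas--Cer\'ia--Cornu\'ejols operators, together with the monotonicity of each $P_\ell$ under polytope inclusion. The technical heart of the first assertion is a commutation identity:
\[
P_\ell(R) \cap \{x \in \R^n : x_i = \tau\} = P_\ell(R^{i,\tau})
\]
for every polytope $R \subseteq [0,1]^n$, every $\ell \in [n]$ and every $\tau \in \{0,1\}$. I will verify it by a one-line convex-combination argument: any $w \in P_\ell(R)$ is a convex combination of points from $R^{\ell,0} \cup R^{\ell,1}$, and imposing $w_i = \tau \in \{0,1\}$ forces every point appearing with positive weight to have $i$-th coordinate equal to $\tau$, hence to belong to $R^{i,\tau}$; the reverse inclusion follows from monotonicity, since $R^{i,\tau} \subseteq R$.

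Intersecting this commutation identity over all $\ell \in [n]$ and specializing to $R = \LS_0^{j-1}(P)$ then yields $\LS_0^{j}(P) \cap \{x_i = \tau\} = \LS_0(\LS_0^{j-1}(P)^{i,\tau})$ for each $\tau \in \{0,1\}$. Taking the union over $\tau \in \{0,1\}$ and reading the right-hand side of~\eqref{lemma:integrality-preserving} as $\LS_0$ applied separately to each of the two polytopal components of the non-convex set $\LS_0^{j-1}(P) \cap \{x_i \in \{0,1\}\}$ recovers the claimed equality.

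For the second assertion, I will use only the inclusion $\LS_0^{j}(P) \subseteq P_i(\LS_0^{j-1}(P)) = \conv(\LS_0^{j-1}(P)^{i,0} \cup \LS_0^{j-1}(P)^{i,1})$, which is immediate since $P_i$ appears as a factor in the intersection defining $\LS_0$. Writing $x$ as $\mu y + (1-\mu) z$ with $y \in \LS_0^{j-1}(P)^{i,1}$, $z \in \LS_0^{j-1}(P)^{i,0}$ and $\mu \in [0,1]$, and reading off the $i$-th coordinate gives $x_i = \mu$; the hypothesis $x_i \in (0,1)$ therefore forces $\mu \in (0,1)$, so both $y$ and $z$ are genuinely present in the decomposition. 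Setting $x^{i,1} := y$ and $x^{i,0} := z$ then produces the required points in $\LS_0^{j-1}(P)$ with $x^{i,1}_i = 1$ and $x^{i,0}_i = 0$.

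I do not expect a significant obstacle, since both assertions follow cleanly from the $\bigcap_\ell P_\ell$ description of $\LS_0$. The only point that demands care is interpretational, namely that $\LS_0$ on the right-hand side of~\eqref{lemma:integrality-preserving} must be read componentwise on the two polytopal pieces of $\LS_0^{j-1}(P) \cap \{x_i \in \{0,1\}\}$; with that reading, both inclusions are essentially one-line consequences of the commutation identity and monotonicity.
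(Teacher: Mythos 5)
Your proof is correct and takes a genuinely different route from the paper. The paper simply cites Cook and Dash (Lemma~2.2 of~\cite{cook+dash01}) for the identity~\eqref{lemma:integrality-preserving} and asserts that the second statement follows ``by the definition of membership in $\LS_0^j(P)$''; you instead give a self-contained argument based on the representation $\LS_0(R) = \bigcap_{\ell} P_\ell(R)$ and a commutation identity $P_\ell(R) \cap \{x_i = \tau\} = P_\ell(R^{i,\tau})$, which is essentially the same observation Cook and Dash make but unpacked in full. Two small points are worth tightening. First, in the forward inclusion of your commutation identity, after deducing that every positively weighted $w^\alpha$ lies in $R^{i,\tau}$ you also need to note that $w^\alpha$ still has $\ell$-th coordinate in $\{0,1\}$ (it was already in $R^{\ell,0} \cup R^{\ell,1}$), so that $w^\alpha \in (R^{i,\tau})^{\ell,0} \cup (R^{i,\tau})^{\ell,1}$ and hence $w \in P_\ell(R^{i,\tau})$; landing only in $R^{i,\tau}$ is not quite the end of the step. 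Second, for the reverse inclusion, monotonicity gives $P_\ell(R^{i,\tau}) \subseteq P_\ell(R)$, but you additionally need $P_\ell(R^{i,\tau}) \subseteq \{x_i = \tau\}$, which holds because $R^{i,\tau}$ is contained in the affine hyperplane $\{x_i = \tau\}$ and convex hulls preserve affine constraints. With these two clarifications the argument is airtight, and your componentwise reading of $\LS_0$ applied to the non-convex set $\LS_0^{j-1}(P) \cap \{x_i \in \{0,1\}\}$ is indeed the intended one. The payoff of your approach is a fully self-contained proof; the cost is a couple of extra lines of bookkeeping that the paper sidesteps by citation.
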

\begin{proof}
    See, e.g.,~\cite[Lemma~2.2]{cook+dash01} for a proof of~\eqref{lemma:integrality-preserving}. 
    The second statement follows by the definition of membership in $\LS^{j}_{0}(P)$.
\end{proof}

\begin{proof}[Proof of Lemma~\ref{lemma:D_implies_A_2}]
    By Lemma~\ref{lemma:D_implies_A_1} we have that there exists $\tilde{x}^{i,1}, \tilde{x}^{i,0} \in \LS^{k-j}(P)$ such that a) holds. Next, since $P$ is $(k+1)$-transitive, there exists a $(k+1)$-transitive group $G$ under which $P$ is $G$-invariant. By Proposition~\ref{proposition:symmetry_preserving} the polytope $\LS^{k-j}_{0}(P)$ is $G$-invariant as well. This implies that the pointwise stabilizer of $J$, $G_{J}$, is transitive on the set of indices $[n] \setminus J$, by Proposition~\ref{proposition:ktransitive}. Therefore, we have that $x^{i,1} := \beta_{G_{J}}(\tilde{x}^{i,1})$ and $x^{i,0} := \beta_{G_{J}}(\tilde{x}^{i,1})$ belong to $\LS^{k-j}_{0}(P)$ by the first statement of Proposition~\ref{proposition:fixG}. Since $G_J$ fixes the indices  in $J$, we deduce that a) holds for $x^{i,1}, x^{i,0}$. The second statement of Proposition~\ref{proposition:fixG} implies that $x^{i,1}, x^{i,0}$ also satisfy b).
\end{proof}

\end{document}